\newcommand*\samethanks[1][\value{footnote}]{\footnotemark[#1]}
\newcommand{\R}{\mathbb{R}}
\newcommand{\D}{\mathcal{D}}
\newcommand{\W}{\mathcal{W}}
\renewcommand{\P}{\mathbb{P}}
\newcommand{\Lip}{\mathrm{Lip}}
\newcommand{\F}{\mathcal{F}}
\newcommand{\G}{\mathcal{G}}
\newcommand{\E}{\mathbb{E}}
\newcommand{\T}{\mathsf{T}}
\newcommand{\m}{\hspace{0.25mm}}
\newcommand{\mm}{\hspace{5mm}}
\newcommand{\n}{\hspace{-0.25mm}}
\DeclareMathOperator*{\mesh}{mesh}
\DeclareMathOperator*{\cov}{Cov}
\crefname{hypothesis}{Hypothesis}{Hypotheses}
\title{On the convergence of adaptive approximations\\ for stochastic differential equations}
\author{James Foster\m\thanks{\m University of Bath, Department of Mathematical Sciences. Email: \texttt{\{jmf68, aj2382\}@bath.ac.uk}}
\and
Andra\v{z} Jelin\v{c}i\v{c}\m\samethanks
}
\begin{document}

\maketitle

% REQUIRED
\begin{abstract}
In this paper, we study numerical approximations for stochastic differential equations (SDEs) that use \textit{adaptive} step sizes. In particular, we consider a general setting where decisions to reduce step sizes are allowed to depend on the future trajectory of the underlying Brownian motion. Since these adaptive step sizes may not be previsible, the standard mean squared error analysis cannot be directly applied to show that the numerical method converges to the solution of the SDE.
Building upon the pioneering work of Gaines and Lyons, we instead use \textit{rough path theory} to establish pathwise convergence for a wide class of adaptive numerical methods on general Stratonovich SDEs (with sufficiently smooth vector fields). To our knowledge, this is the first convergence guarantee applicable to standard solvers, such as the Milstein and Heun methods, with non-previsible step sizes. In our analysis, we require adaptive step sizes to have a ``no skip'' property and to take values at only dyadic times. Secondly, in contrast to the Euler-Maruyama method, we require the SDE solver to have unbiased L\'{e}vy area terms in its Taylor expansion. We conjecture that for adaptive SDE solvers more generally, convergence is still possible provided the scheme does not introduce ``L\'{e}vy area bias''.
We present a simple example where the step size control can skip over previously considered times, resulting in the numerical method converging to an incorrect limit (i.e.~not the Stratonovich SDE).
Finally, we conclude with an experiment demonstrating the accuracy of Heun's method and a newly introduced Splitting Path-based Runge-Kutta scheme (SPaRK) when used with adaptive step sizes.
\end{abstract}

% REQUIRED
\begin{keywords}
Stochastic differential equations, numerical methods, adaptive step size control, rough path theory
\end{keywords}

% REQUIRED
\begin{AMS}
  60H35, 60L90, 65C30
\end{AMS}

\section{Introduction}
Stochastic differential equations (SDEs) have seen widespread use in the physical, engineering and social sciences for describing random systems. Examples of SDEs can be found within finance \cite{brigo2006finance, oksendal2003SDEs, shreve2004finance},  biology \cite{allen2010biology, browning2020biology, jha2012biology, vadillo2019LV}, physics \cite{strauss2017physics, leimkuhler2015ULD, milstein2004physics, sobczyk1991physics} and, more recently, in data science \cite{chen2014SGHMC, cheng2018MCMC, dockhorn2022diffusion, foster2021shifted, salvi2023NSDEs, kidger2021NSDEs1, li2020NSDEs, li2019LangevinMC, song2021scoredbased, welling2011SGLD}.
However, just as for ordinary differential equations (ODEs), solutions of SDEs are rarely obtainable in closed form, and numerical methods are often required in practice.\smallbreak

In such applications, this is typically done through Monte Carlo simulation, where numerical solutions are computed using independently generated random variables
(for example, corresponding to the increments of the underlying Brownian motion).
By independently generating multiple numerical solutions, one can then use a Monte Carlo estimator to compute quantities relating to the average behaviour of the SDE.
As motivation for this paper, we note that the \textit{status quo} is to propagate numerical solutions of SDEs forwards in time using a fixed step size, whereas so-called ``adaptive'' step sizes have seen great success for discretizing ODEs (see \cite[Section II.4]{hairer1993odes} and \cite{soderlind2002odes}).
We refer the reader to the book \cite{higham2021SDEs} for an accessible introduction to SDE numerics.\vspace{-1.5mm}

\begin{figure}[!hbt]
    \centering
    \includegraphics[width=\textwidth]{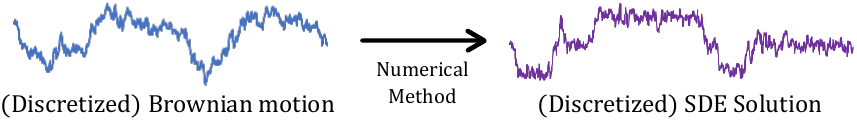}\vspace*{-4mm}
    \caption{In the Monte Carlo paradigm, Brownian motion is discretized and then mapped to a numerical solution of the SDE. Despite the random fluctuations of underlying Brownian motion, which can occasionally be very large, numerical methods for SDEs typically only use fixed step sizes.}
    \label{fig:montecarlo}
\end{figure}

In this paper, we will study numerical methods with adaptive step size control for general Stratonovich SDEs of the form:
\begin{align}
    \label{eq:strat_SDE}
    dy_t = f(y_t)\m dt + g(y_t) \circ dW_t\m,
\end{align}
where $y=\{y_t\}_{t\in[0,T]}$ is an $\R^e$-valued process, $W = (W^1, \cdots, W^d) = \{W_t\}_{t\in [0,T]}$ is a $d$-dimensional Brownian motion, the vector fields $f:\R^e\rightarrow\R^e$ and $g = (g_1, \cdots, g_d):\R^e\rightarrow \R^{e\times d}$ are sufficiently smooth and $g(y_t) \circ dW_t$ is understood as $\sum_{i=1}^d g_i(y_t)\circ dW_t^i$.\medbreak

In \cite{gaines1997variable}, Gaines and Lyons consider adaptive step size controls for SDE simulation where the current ``candidate'' step size $h_k$ at time $t_k$ may be halved depending on the increment of the underlying Brownian motion $W_{t_k+h_k} - W_{t_k}$ sampled on $[t_k,t_k+h_k]$.
This approach is intuitive since it can allow the numerical method for (\ref{eq:strat_SDE}) to adapt to the ``large'' fluctuations of Brownian motion which, despite having a low probability, can significantly increase the overall approximation error. However, such adaptive step sizes are clearly \textit{non-previsible} and therefore carry theoretical and practical challenges.

\begin{definition}
    We say that an adaptive step size control is ``previsible'' if the next time $t_{k+1}$ for the numerical solution can be determined using the information available up to time $t_k\m$. Otherwise, we say the adaptive step size is ``non-previsible''.
\end{definition}
\noindent
To address these challenges, Gaines and Lyons made the following key contributions.\medbreak
\begin{itemize}[leftmargin=1.5em]
\item Using L\'{e}vy's well-known construction of Brownian motion, a tree structure called the \textit{Brownian tree} was proposed for the generation and storage of Brownian paths. The first level stores increments of Brownian motion over intervals of a fixed size, whereas subsequent levels are dynamically created whenever the Brownian motion is sampled at midpoints within the intervals. Thus, the finest discretization of the Brownian path, which is used to compute a numerical solution of the SDE (\ref{eq:strat_SDE}), is stored at the lowest level of the tree. We illustrate a Brownian tree in Figure \ref{fig:brownian_tree}.\vspace{-4mm}
\begin{figure}[!hbt]
    \hspace*{6.5mm}\includegraphics[width=0.95\textwidth]{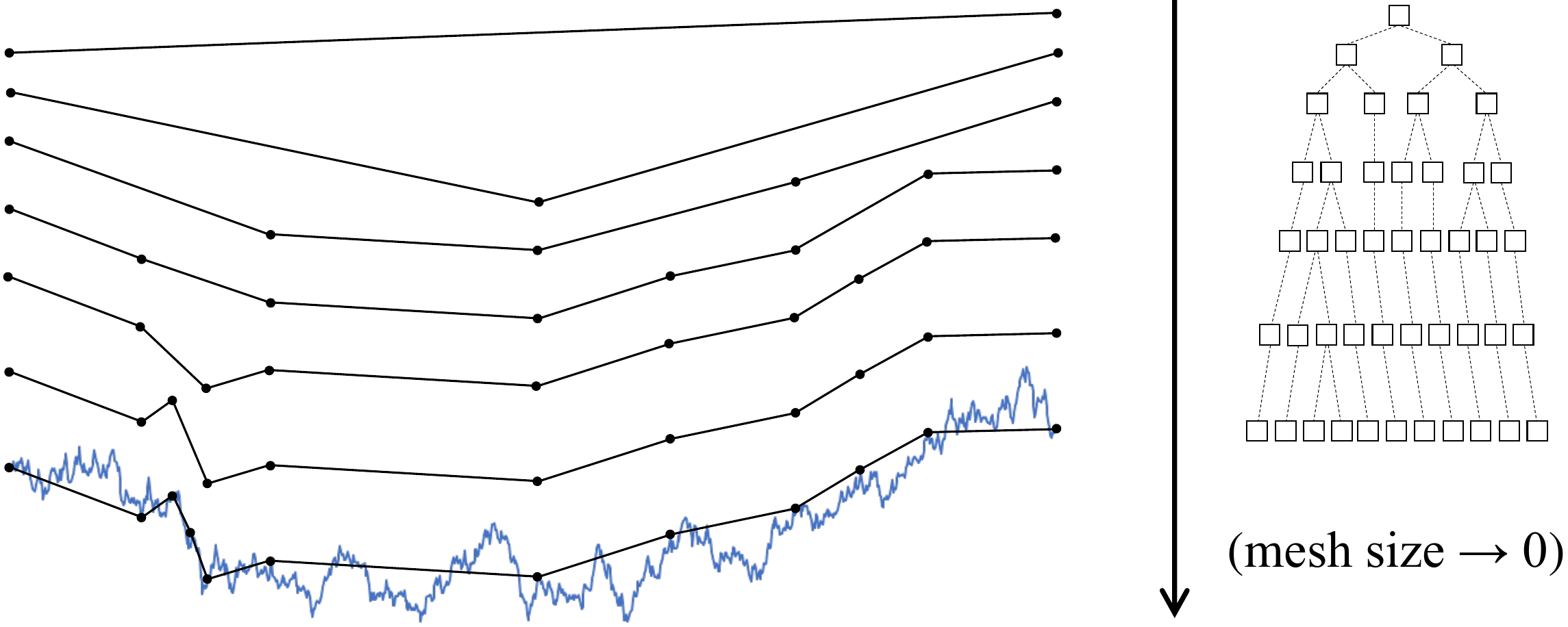}\vspace{-3mm}
    \caption{If an SDE solver has an adaptive step size control which can only halve step sizes, then it does not skip times and yields a sequence of partitions with increments stored in a Brownian tree. Of course, as the mesh of these partitions tends to zero, we would hope the approximation converges.}
    \label{fig:brownian_tree}
\end{figure}

In scientific machine learning, computing gradients of loss (or error) functions with respect to the parameters of a differential equation can be memory intensive \cite{rackauckas2021universal}. As a consequence, memory-efficient extensions of the Brownian tree have been developed, such as the \textit{Virtual Brownian Tree} \cite{jelincic2024VBT, li2020NSDEs} and \textit{Brownian Interval} \cite{kidger2021NSDEs2}.\medbreak

\item Separate from the practical challenge of implementing numerical methods for SDEs with adaptive step sizes, there is a ``simple'' theoretical question to be addressed:\smallbreak
\begin{center}
\textit{Do numerical methods for SDEs with non-previsible adaptive step sizes converge?}\medbreak
\end{center}

It was show in \cite[Theorem 4.3]{gaines1997variable} that pathwise convergence in the adaptive step size setting does still hold provided that the numerical solution $Y=\{Y_k\}$ satisfies
\begin{equation*}
Y_{k+1} = Y_k + f(Y_k)h_k + g(Y_k)W_k + \sum_{i,j=1}^d g_j^\prime(Y_k)g_i(Y_k) \int_{t_k}^{t_{k+1}} \hspace*{-1mm}\big(W_t^i - W_{t_k}^i \big)\m\circ\m dW_t^j + o(h_k),
\end{equation*}
where $W_k := W_{t_{k+1}} - W_{t_k}$ and $h_k := t_{k+1} - t_k$ denote the \textit{Brownian increment} and step size used to propagate the numerical solution $Y_k$ over the interval $[t_k\m,\m t_{k+1}]$.\medbreak

\item To motivate this pathwise convergence result, Gaines and Lyons present a simple counterexample \cite[Section 4.1]{gaines1997variable} demonstrating that the Euler-Maruyama method, which clearly does not satisfy the above condition, can converge to an incorrect (i.e.~not It\^{o}) solution of the SDE when the adaptive step sizes are non-previsible.
\end{itemize}\medbreak

Therefore, it is proposed in \cite{gaines1997variable} that second iterated integrals of Brownian motion
should be finely discretized (with $o(h_k)$ accuracy) to ensure the SDE solver converges.
As a consequence, following \cite{gaines1997variable}, research has focused on either previsible step sizes \cite{fang2020adaptiveeuler, hoel2022highorderadaptive, hofmann2000additive, kelly2018adaptive, lamba2007adaptiveuler, mora2023variable, neuenkirch2019adaptiveeuler, romisch2006adaptive}, non-previsible adaptive step sizes for commutative SDEs \cite{burrage2002variable, burrage2004variable, foster2022sle, ilie2015adaptive, jelincic2024VBT, mauthner1998variable, rackauckas2017adaptive, adaptive2021brownian} or adaptive high order schemes with iterated integrals \cite{bayer2023adaptive}.\medbreak

Unlike previous works, we consider the use of non-previsible adaptive step sizes for simulating general SDEs -- but where the solver does not require iterated integrals $\int_{t_k}^{t_{k+1}} \hspace*{-1mm}\big(W_t^i - W_{t_k}^i \big)\m\circ\m dW_t^j\m$, which are computationally expensive to finely discretize \cite{foster2024signature}.\medbreak

In this paper, we argue that such integrals are not needed and traditional methods, such as the no$\m$-area Milstein and Heun methods, can converge with adaptive step sizes.
More concretely, we shall consider a class of adaptive step sizes that produce ``no-skip'' partitions when applied to discretize an SDE. This notion is formally defined below.

\begin{definition}[No-skip partition] We say that $\D$ is a ``no-skip'' partition of $[0,T]$ with respect to a Brownian motion $W$ if there exists measurable functions $\{\Lambda_l\}_{l\m\geq\m 0}$ where $\Lambda_l : \R^{n_l\times m_l} \rightarrow [0,T]^{n_{l+1}}$ for positive integers $\{n_l\}_{l\m\geq\m 0}$ and $\{m_l\}_{l\m\geq\m 0}\m$, and an initial time $t\in(0,T]$ such that\vspace{-2.25mm}
 \begin{align*}
\D = \bigcup_{l = 0}^L \mathcal{T}_l\m,\\[-22pt]
\end{align*}
where:\smallbreak
\begin{itemize}
    \item $\mathcal{T}_0 := \{0, t\}$ and $\mathcal{T}_l = \{t_i^l\}_{0\leq i\leq n_l}$ is a collection of times $t_i^l\in[0,T]$ obtained as
\begin{align*}
\mathcal{T}_{l+1} & := \{0\}\cup\Lambda_l(\mathcal{E}_l).
\end{align*}

    \item $L$ is a stopping time with respect to the filtration $\mathcal{F}^\D$ defined by the $\sigma$-algebras
\begin{align*}
\mathcal{F}_1^\D & :=  \sigma\big(\mathcal{E}_1\big),\\[2pt]
\mathcal{F}_{l+1}^\D & := \sigma\big(\mathcal{F}_{l}^\D \cup \mathcal{E}_{l+1}\big).
\end{align*}

    \item $\mathcal{E}_l$ contains the following information corresponding to the $n_l$ timesteps in $\mathcal{T}_l\m$,
\begin{align*}
\mathcal{E}_l := \left\{\big(h_k^l\m, \W_k^{\m l}\m, z_k^l\m\big) \,:\hspace{2.5mm}\begin{matrix}h_k^l = t_{k+1}^l - t_k^l\m,\hspace{2.5mm} z_k^l\in\R^{m_l-(q+2)},\hspace{2.5mm}0\leq k < n_l\\[4pt] \hspace{-8.6mm}\W_k^{\m l} = \Big\{\int_{t_k^l}^{t_{k+1}^l} \Big(\frac{t-t_k^l}{t_{k+1}^l - t_k^l}\Big)^p\, d W_t : 0\leq p\leq q\Big\}\end{matrix}\right\},
\end{align*}
where $q\geq 0$ and $z_k^l$ is a random vector independent of the Brownian motion.
\end{itemize}
\end{definition}\medbreak

\begin{definition}\label{def:no_skip_dyadic_partition}
We say $\D$ is a no-skip dyadic partition of $[0,T]$ if it is a no-skip partition and, for each $\omega\in\Omega$, $\D(\omega) = \D_N(\omega)$ for some $N(\omega)\geq 0$ where $\D_0 := \{0,T\}$ and $\m\D_{n+1} := \{\frac{1}{2}(t_k+t_{k+1})\}\cup \D_n$ for some choice of consecutive times $t_k\m,t_{k+1}\in\D_n\m$.
\end{definition}\smallbreak

\begin{example}
The adaptive step sizes described in Gaines and Lyons \cite{gaines1997variable} give no-skip partitions. In their case, the function $\Lambda_l$ will halve a step size depending on an estimate of the local error which is determined by the Brownian increments in $\W_k^{\m l}\m$. $L$ is the stopping time of when the proposed numerical solution is accepted at time $T$.
\end{example}

\medbreak

\noindent
With these definitions, we give an informal version of our main result, Theorem \ref{thm:pathwise_conv_thm}.\smallbreak
\begin{theorem}[Convergence of adaptive SDE approximations, informal version]
Suppose that the vector fields of the SDE (\ref{eq:strat_SDE}) are bounded and differentiable with bounded derivatives. Moreover, suppose $g$ is twice differentiable with $g^{\prime\prime}$ bounded and $\alpha$-H\"{o}lder continuous where $\alpha\in(0,1)$. We assume the following about the SDE solver:\label{thm:intro}\smallbreak
\begin{enumerate}[leftmargin=1.5em]
\item Let $Y = \{Y_k\}$ denote a numerical solution of (\ref{eq:strat_SDE}) where $Y_0 := y_0$ and each step $Y_k\mapsto Y_{k+1}\m$, between times $t_k$ and $t_{k+1}\m$, is a function of the (random) quantities:
\begin{align}\label{eq:random_quantities}
h_k = t_{k+1}-t_k\m,\hspace{2mm}\W_k := \bigg\{\int_{t_k}^{t_{k+1}} \bigg(\frac{t-t_k}{t_{k+1} - t_k}\bigg)^p\, d W_t\bigg\}_{0\m\leq\m p\m\leq\m q},\hspace{2mm} z_k\in\R^m,
\end{align}
where $h_k$ is the step size used, $q\geq 0$ and the random vector $z_k$ is assumed to be independent from the Brownian motion. For example, $z_k$ could be the additional uniform random variable used within so-called ``randomized'' numerical methods
\cite{he2020randomized, hu2019randomized, kruse2021randomized, morkisz2021randomized, shen2019randomized}. In addition, we assume that each step of the method satisfies 
\begin{align}\label{eq:intro_ode_condition}
Y_{k+1} = Y_k & + f(Y_k)h_k + g(Y_k)W_k\\
& + \sum_{i,\m j\m=\m 1}^d g_j^\prime(Y_k)g_i(Y_k)\,\E\bigg[\int_{t_k}^{t_{k+1}} \hspace*{-1mm}\big(W_t^i - W_{t_k}^i \big)\circ dW_t^j\,\Big|\, \W_k\bigg]\nonumber\\[1pt]
& + R_k\big(Y_k\m, h_k\m,\W_k\m, z_k\big)\m,\nonumber
\end{align}
where the remainder $R_k$ is assumed to be small so that $R_k\sim o(h_k)$ almost surely.\medbreak

\item We assume the adaptive step size control we are using produces a sequence $\{\D_n\}$ of no-skip dyadic partitions with respect to $W$ and $\mesh(\D_n) \rightarrow 0$ almost surely.\medbreak

\end{enumerate}
Then, letting $Y^n$ denote the numerical solution computed over $\D_n = \{t_k^n\}_{k\geq 0}\m$, we have
\begin{align*}
\sup_k\big\|Y_k^n - y(t_k^n)\big\|_2 \rightarrow 0,
\end{align*}
as $n\rightarrow \infty$ almost surely, where $\|\m\cdot\m\|_2$ denotes the standard Euclidean norm.
% Moreover, the approximations $\{Y^n\}$ converges to $\{y_t\}_{t\in[0,T]}$ in a ``\textit{rough path}'' sense almost surely.
\end{theorem}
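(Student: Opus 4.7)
The plan is to interpret each adaptive numerical method as a one-step Davie-type approximation driven by a carefully constructed random rough path, and then transfer convergence of this driver to convergence of the SDE solution via the continuity of the It\^{o}-Lyons map. The expansion~(\ref{eq:intro_ode_condition}) is engineered so that a single step of the scheme coincides, up to the remainder $R_k = o(h_k)$, with the first-order rough integration of $dy_t = f(y_t)\m dt + g(y_t)\m d\mathbf{X}_t^n$ against a level-$2$ enhanced path $\mathbf{X}^n$ whose first level interpolates the Brownian increments $W_{t_{k+1}^n} - W_{t_k^n}$ on $\D_n$ and whose second level is given by the conditional expectation appearing in~(\ref{eq:intro_ode_condition}). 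The central observation is that the antisymmetric part of this second level---the effective L\'{e}vy area of the scheme---reduces to a function of $\W_k^n$ only, so that $\mathbf{X}^n$ is a geometric rough path which does not introduce any ``L\'{e}vy area bias''.

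The first technical task is to set up the rough path framework in $p$-variation for some $p\in(2,3)$, which is compatible with the assumed $C^{2,\alpha}$ regularity of $g$ via Lip-$(2+\alpha)$ estimates. Next I would exploit the no-skip dyadic structure: since $\D_n$ consists only of dyadic rationals of $T$, it embeds into a finite dyadic grid, and the no-skip property prevents the adaptive refinement from discarding already-sampled times. This lets me realize $\mathbf{X}^n$ as a piecewise-geodesic enhancement of $W$ defined on a full dyadic partition of equal or finer depth, and then appeal to a Wong--Zakai-type theorem for piecewise-geodesic approximations along dyadic partitions (as in the Friz--Victoir framework) to conclude that $\mathbf{X}^n \to \mathbf{W}^{\mathrm{Strat}}$ in the $p$-variation rough path metric almost surely, where $\mathbf{W}^{\mathrm{Strat}}$ denotes the Stratonovich Brownian rough path. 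The additional information contained in $\W_k^n$ when $q\geq 1$ only sharpens the approximation and does not alter the limit.

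With rough path convergence of the drivers in hand, Lyons' universal limit theorem applied to the RDE with driver $\mathbf{X}^n$ yields uniform convergence of its exact solutions $\widetilde{y}^{\m n}$ to the Stratonovich SDE solution $y$. It then remains to bound $\sup_k\|Y_k^n - \widetilde{y}^{\m n}(t_k^n)\|_2$, and here the role of the remainder is transparent: summing the local errors $R_k = o(h_k)$ along $\D_n$ and invoking a discrete Gronwall argument together with the boundedness and Lipschitz regularity of $f$ and $g$ shows that this quantity vanishes almost surely. Combining the two convergences completes the proof.

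The principal obstacle I anticipate lies in the rough path convergence step. The deterministic Wong--Zakai theorem along dyadic partitions is classical, but here $\D_n$ is random, possibly non-previsible, and terminates at a stopping time $L$ of the adaptive algorithm, so the standard statements cannot be applied directly. The no-skip dyadic hypothesis is precisely what should repair this: it couples $\D_n$ with a deterministic sequence of dyadic grids and allows one to transfer pathwise estimates through a uniform bound taken over all admissible adaptive refinements, then closed up by a Borel--Cantelli step driven by $L^p$ moment estimates for Brownian increments on dyadic intervals. Handling the auxiliary randomization variables $z_k$ is then routine, since they are independent of $W$ and enter only through the $o(h_k)$ remainders.
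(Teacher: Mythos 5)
Your high-level architecture is correct and matches the paper's: show rough-path convergence of an enhanced driver associated with the adaptive discretization, apply a Universal Limit Theorem to get convergence of the exactly-solved CDEs, then accumulate the $o(h_k)$ local errors via a Lipschitz-flow (Davie-type) estimate and a telescoping/Gronwall argument. The observation that the second-level term in (\ref{eq:intro_ode_condition}) is ``geometric'' (area-unbiased) is also the right instinct. Where the proposal goes wrong is precisely the step you yourself flag as the obstacle: how to establish rough-path convergence of the enhanced path when the dyadic partitions $\D_n$ are random and non-previsible.

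First, your claim that $\mathbf{X}^n$ can be ``realized as a piecewise-geodesic enhancement of $W$ defined on a full dyadic partition of equal or finer depth'' is not correct. An adaptive partition is generically \emph{non-uniform}: the piecewise-linear/polynomial driver built along $\D_n$ differs from the one built along the finest uniform dyadic grid embedded in $\D_n$, in every region where $\D_n$ is coarser, and that discrepancy is not small. There is therefore no reduction to deterministic Wong--Zakai along uniform dyadic partitions. Second, the proposed repair --- ``a uniform bound taken over all admissible adaptive refinements, then closed up by a Borel--Cantelli step'' --- cannot work as stated: the number of dyadic partitions of $[0,T]$ with smallest cell $2^{-k}T$ grows doubly exponentially in $k$, so a union bound over all admissible adaptive refinements is hopeless even with sub-Gaussian tail estimates for Brownian increments. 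What actually makes the argument go through, and what the proposal misses entirely, is the \emph{martingale structure created by the no-skip property}: after extracting a nested subsequence (Theorem \ref{thm:subsequence}, Proposition \ref{prop:subsequence}), the no-skip partitions generate a genuine filtration $\{\F_n\}$, the piecewise-polynomial driver is $\widetilde{W}^n_t = \E[W_t \mid \F_n]$, and the approximate second iterated integral is $\E\big[\int_s^t W_{s,u}\otimes\circ\, dW_u \mid \F_n\big]$ (equation (\ref{eq:levy_area_as_martingale})). Convergence then follows from Doob's martingale convergence theorem, and the uniform $\beta$-H\"older bounds needed to upgrade pointwise to rough-path convergence come from Doob's maximal inequality applied to $\E[C_2\mid\F_n]$, not from any enumeration of partitions. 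This filtration/martingale mechanism is the essential new idea that handles non-previsibility; without it, the driver-convergence step in your proposal has no valid proof, and the rest of the argument, though sound, rests on an unproven foundation.
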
\smallbreak
\begin{remark}
The condition (\ref{eq:intro_ode_condition}) is much less restrictive than \cite[Equation (4.6)]{gaines1997variable} since it does not require the second iterated integrals of the Brownian motion to be finely discretized with $o(h)$ accuracy. In practice, (\ref{eq:intro_ode_condition}) can be verified as, for example,
\begin{align}
\E\bigg[\int_{t_k}^{t_{k+1}} \hspace*{-1mm}\big(W_t^i - W_{t_k}^i \big)\circ dW_t^j\,\Big|\, W_k\bigg] & = \frac{1}{2}W_k^i\m W_k^j\m,\label{eq:expected_integral_1}\\[3pt]
\E\bigg[\int_{t_k}^{t_{k+1}} \hspace*{-1mm}\big(W_t^i - W_{t_k}^i \big)\circ dW_t^j\,\Big|\, W_k\m, H_k\bigg] & = \frac{1}{2}W_k^i\m W_k^j + H_k^i\m W_k^j - W_k^i\m H_k^j\m,\label{eq:expected_integral_2}
\end{align}
where $H_k = \int_{t_k}^{t_{k+1}}\hspace{-0.75mm} \big(\frac{1}{2} - \frac{t}{h_k}\big)\m dW_t\sim\mathcal{N}(0,\frac{1}{12}h_k\m I_d)$ is independent of $W_k$ \cite[Theorem 5.4]{foster2023levyarea}.
Intuitively, the fact that these approximations are conditional expectations enables us to employ certain martingale arguments for establishing convergence in our analysis. Otherwise, the $O(h)$ local errors could propagate linearly to give a global $O(1)$ error.
\end{remark}\medbreak

To support the above remark, we give a few examples of SDE solvers which satisfy the condition (\ref{eq:intro_ode_condition}) and have an embedded method to enable adaptive step size control. Of course, these are not extensive and we hope that our general convergence analysis will enable the development of further solvers with non-previsible adaptive step sizes.\medbreak

\subsection{Examples of adaptive numerical methods}\label{sect:examples_of_methods}

We introduce the notation
\begin{align}\label{eq:new_notation1}
F(y) := \begin{pmatrix}
f(y) &  g(y)
\end{pmatrix},\quad \overline{W}_{\n t} := \begin{pmatrix}
t & W_t
\end{pmatrix}^\T,
\end{align}
so that the SDE (\ref{eq:strat_SDE}) can be rewritten as a simpler ``controlled'' differential equation:
\begin{align}\label{eq:sde_as_cde}
dy_t = F(y_t) \circ d\overline{W}_t\m.
\end{align}
To define our examples of adaptive numerical methods, we also introduce the notation
\begin{align}\label{eq:new_notation2}
\overline{W}_{\n k} := \begin{pmatrix}
h_k & W_k
\end{pmatrix}^\T,\quad\quad\overline{H}_k := \begin{pmatrix}
0 & H_k
\end{pmatrix}^\top,
\end{align}
where $H_k := \int_{t_k}^{t_{k+1}}\hspace{-0.25mm} \big(\frac{1}{2} - \frac{t}{h_k}\big)\m dW_t = \frac{1}{h_k}\int_{t_k}^{t_{k+1}}\hspace{-0.25mm}(W_t - W_{t_k} - \frac{t-t_k}{h_k}W_k)\m dt\sim\mathcal{N}(0,\frac{1}{12}h_k\m I_d)$ is the ``space-time'' L\'{e}vy area of Brownian motion on $[t_k\m,t_{k+1}]$, see \cite[Definition 3.5]{foster2020poly}.\medbreak
% and the random vector $S_k\sim\text{Uniform}\big(\big\{-1,1\big\}^d\big)$ is assumed to be independent of $W$.

We now present a few straightforward numerical methods that are compatible with the class of adaptive step size controls satisfying the conditions of Theorem \ref{thm:intro}.\medbreak

Due to their popularity in ODE numerics, we propose embedded methods which output $Y_{k+1}$ alongside another approximation $\widetilde{Y}_{k+1}$ from $(Y_k, h_k, \W_k, z_k)$ at each step (often by reusing vector field evaluations). The difference between $Y_{k+1}$ and $\widetilde{Y}_{k+1}$ may then serve as a proxy for the local error and used by the adaptive step size controller. For example, the decision to halve a step size $h_k$ can simply be made if $\|Y_{k+1} - \widetilde{Y}_{k+1}\|_2$ is above a user-specified tolerance. In particular, this should improve accuracy when ``large'' Brownian increments $W_k$ are generated (which happens with low probability).\medbreak

Whilst \cite{gaines1997variable} showed that the Euler-Maruyama method may fail to converge to the It\^{o} SDE solution when used with an adaptive step size, our main result establishes the convergence of the following ``increment only'' Milstein method introduced in \cite{giles2014antithetic}.

\begin{definition}[No$\m$-area Milstein method with embedded Euler for It\^{o} SDEs]\label{def:milstein_euler}
We define a numerical solution $\{Y_k\}$ for the below SDE (understood in the It\^{o} sense)
\begin{align}
    \label{eq:ito_SDE}
    dy_t = f(y_t)\m dt + g(y_t)\m dW_t\m,
\end{align}
by $Y_0 := y_0$ and, for $k\geq 0$, we define $Y_{k+1}$ and the embedded approximation $\widetilde{Y}_{k+1}$ as
\begin{align}
Y_{k+1} & := Y_k + f\big(Y_k\big) h_k + g\big(Y_k\big) W_k + \frac{1}{2}g^{\m\prime}\big(Y_k\big)g\big(Y_k\big)\big(W_k^{\otimes 2} - h_k I_d\big),\label{eq:milstein}\\
\widetilde{Y}_{k+1} & := Y_k + f\big(Y_k\big) h_k + g\big(Y_k\big) W_k\m.\label{eq:milstein_embed}
\end{align}
\end{definition}
\begin{remark}\label{rmk:milstein}
To apply our main result, we rewrite (\ref{eq:ito_SDE}) as a Stratonovich SDE:
\begin{align*}
    dy_t = \widetilde{f}(y_t)\m dt + g(y_t)\circ dW_t\m,
\end{align*}
where $\widetilde{f}(y) := f(y) - \frac{1}{2}\sum_{i=1}^d g_i^\prime(y)g_i(y)$ is defined using the It\^{o}$\m$-Stratonovich correction. Then the no$\m$-area Milstein method given by (\ref{eq:milstein}) simplifies and can be expressed as
\begin{align*}
Y_{k+1} := Y_k + \widetilde{f}\big(Y_k\big) h_k + g\big(Y_k\big) W_k + \frac{1}{2}g^{\m\prime}\big(Y_k\big)g\big(Y_k\big)W_k^{\otimes 2}.
\end{align*}
By equations (\ref{eq:intro_ode_condition}) and (\ref{eq:expected_integral_1}), with $R_k = 0$, it is clear the above has the required form.
\end{remark}

To our knowledge, the following are the first embedded stochastic Runge$\m$-Kutta methods for general SDEs (see \cite{embed2003srk, adaptive2021brownian} for examples with scalar or additive noise). We first consider a multidimensional extension of the Heun scheme proposed in \cite{roberts2012euler}.

\begin{definition}[Heun's method with embedded Euler-Maruyama]\label{def:heun_euler} Using the notation in (\ref{eq:new_notation1}) and (\ref{eq:new_notation2}), we define a numerical solution $\{Y_k\}$ for the SDE (\ref{eq:sde_as_cde}) as
\begin{align}
\widetilde{Y}_{k+1} & := Y_k + F\big(Y_k\big)\overline{W}_{\n k}\m,\label{eq:heun_embed}\\[2pt]
Y_{k+1} & := Y_k + \frac{1}{2}\big(F\big(Y_k\big) + F\big(\widetilde{Y}_{k+1}\big)\big)\overline{W}_{\n k}\m,\label{eq:heun}
\end{align}
for $k\geq 0$ with $Y_0 := y_0\m$.
\end{definition}

In addition, we shall introduce a new stochastic Runge$\m$-Kutta method that uses space-time L\'{e}vy area $H_k$ to achieve a slightly better local accuracy than traditional ``increment only'' schemes, including Heun's method (see Appendix \ref{append:taylor_expansions} for details).
For additive noise SDEs, the proposed scheme becomes high order and achieves an $O(h^{1.5})$ strong convergence rate (i.e.~when the noise vector fields $g_i$ are constant). This scheme was inspired by a recent path-based splitting method \cite[Example 1.3]{foster2024splitting}.

\begin{definition}[Splitting Path Runge-Kutta (SPaRK) with embedded Heun] We define a numerical solution $\{Y_k\}$ for the SDE (\ref{eq:sde_as_cde}) by $Y_0 := y_0$ and, for all $k\geq 0$,\label{def:srk_heun}\vspace*{-1.5mm}
\begin{align}
Y_{k+\frac{1}{2}} := Y_k & + F\big(Y_k\big)\bigg(\frac{1}{2}\overline{W}_{\n k} + \sqrt{3}\,\overline{H}_k\bigg),\hspace{7.5mm}
Z_{k+1} := Y_k +  F\big(Y_{k+\frac{1}{2}}\big)\overline{W}_{\n k}\m,\nonumber\\
Y_{k+1} := Y_k & + F\big(Y_k\big)\big(a\m\overline{W}_{\n k} + \overline{H}_k\big) + b\m F\big(Y_{k+\frac{1}{2}}\big)\overline{W}_{\n k} + F\big(Z_{k+1}\big)\big(a\m\overline{W}_{\n k} - \overline{H}_k\big),\label{eq:srk}\\
\widetilde{Y}_{k+1} := Y_k &  + \frac{1}{2}\big(F\big(Y_k\big) + F\big(Z_{k+1}\big)\big)\overline{W}_{\n k}\m,\label{eq:srk_embed}\\[-20pt]\nonumber
\end{align}
where $a := \frac{3-\sqrt{3}}{6}$ and $b := \frac{\sqrt{3}}{3}$.
\end{definition}

Further details on the SPaRK method (\ref{eq:srk}) and its improved local accuracy over Heun's method (particularly for SDEs with general noise) are given in Appendix \ref{append:taylor_expansions}. However, the formulation of the above (embedded) Runge$\m$-Kutta methods is only one component of the SDE solver -- the other being an algorithm for controlling step sizes.\medbreak

For example, we could consider the following simple adaptive step size control, though we emphasise that our analysis (Theorem \ref{thm:pathwise_conv_thm}) holds more generally, provided that their associated partitions are no-skip and dyadic with mesh size tending to zero.

\begin{definition}[A simple dyadic step size control for embedded Runge-Kutta]Let $h_\text{init}\,, C > 0$ be fixed user-specified constants (corresponding to the initial step size and local error tolerance). Then we  depth-first recursively construct a Brownian tree, with the $n$-th level containing Brownian information on intervals of size $h_\text{init}\m / 2^n\m$, until\label{def:intro_adaptive_control}
\begin{align}\label{eq:intro_adaptive_control}
\big\|Y_{k+1} - \widetilde{Y}_{k+1}\big\|_2 \leq C \sqrt{h_k}\m,
\end{align}
holds for all $k\geq 0$, where the numerical solution $Y = \{Y_k\}_{k\m\geq\m 0}$ is constructed using the information of the Brownian motion provided by the leaves of the Brownian tree.
\end{definition}
\begin{remark}
In SDE numerics, it is common to consider mean squared errors. Informally speaking, in the constant step size regime, a local mean squared error of $O(h^p)$ gives a global mean squared error of $O(h^{p-1})$ (see \cite[Theorem 1.1]{milstein2004physics} for details).
Therefore, if we would like the global error to be less than $\varepsilon > 0$, then we would expect local mean squared errors to be at most $O(\varepsilon h)$. This motivates the estimator (\ref{eq:intro_adaptive_control}), although $\|Y_{k+1} - \widetilde{Y}_{k+1}\|_2^2$ is a proxy for the pathwise (and not the mean squared) error.
\end{remark}\medbreak
\begin{remark}
Suppose there exists an almost surely finite random variable $c$ such that $\|Y_{k+1} - \widetilde{Y}_{k+1}\|_2\geq c h_k^\beta$ with $\beta > \frac{1}{2}$. Then, by decreasing the tolerance $C\rightarrow 0$, we have  $\sup_k(h_k)\rightarrow 0$ as $\m c h_k^\beta \leq \|Y_{k+1} - \widetilde{Y}_{k+1}\|_2\leq C \sqrt{h_k}\m$ implies that $h_k^{\beta - \frac{1}{2}} \leq \frac{C}{c}$.
However, we have been unable to show this lower bound, and thus can only conjecture that the step size controller in Definition \ref{def:intro_adaptive_control} satisfies the assumptions of our result.\label{rmk:intro_adaptive_control}
\end{remark}
\subsection{Organisation of the paper}\label{sect:organisation} In Section \ref{sect:main_result}, we will establish our pathwise convergence result (Theorem \ref{thm:pathwise_conv_thm}) for a wide class of adaptive SDEs approximations. The key idea in our proof methodology is to employ \textit{rough path theory} and show that piecewise polynomial approximations of Brownian motion converge in the \textit{$\alpha\m$-H\"{o}lder} metric along any nested subsequence of the partitions produced by the SDE solver. This argument is a natural extension of the analysis presented in \cite[Section 13.3.2]{frizvictoir2010roughpaths}, which presents similar results for piecewise linear approximations of Brownian motion that are defined along a sequence of partitions that are nested, but also deterministic.\medbreak

Once we have $\alpha\m$-H\"{o}lder convergence for the piecewise polynomial approximations, we can apply an extension of the \textit{Universal Limit Theorem} from rough path theory \cite{friz2024rsdes} to show that the corresponding sequence of Controlled Differential Equations (CDEs) converges to the SDE solution almost surely. We show that pathwise convergence is still preserved when small $o(h)$ local perturbations are added to the CDE solutions. By considering the local Taylor expansion of the CDEs between discretization times, we obtain the condition (\ref{eq:intro_ode_condition}) from Theorem \ref{thm:intro} for the numerical method to satisfy.
We conclude Section \ref{sect:main_result} by showing the no$\m$-area Milstein, Heun and Splitting Path Runge-Kutta (SPaRK) methods in Section \ref{sect:examples_of_methods} satisfy the convergence condition (\ref{eq:intro_ode_condition}).\medbreak

In Section \ref{sect:counterexample}, we give an example demonstrating that without no-skip partitions, an adaptive step size control can induce a non-zero bias which prevents convergence.
Here, the decision to reduce a step size is based on whether it increases the value of the numerical solution. Hence, these step sizes can ``skip over'' known values of $W$. Ultimately, this leads to an approximation with an incorrect mean -- even in the limit.\medbreak

In Section \ref{sect:numerical_example}, we present an experiment comparing methods with different step size controls for the well-known SABR\footnote{\textbf{S}tochstic \textbf{A}lpha-\textbf{B}eta-\textbf{R}ho} model used in mathematical finance \cite{SABR2017Cai, SABR2018Cui, SABR2002Hagan, SABR2017Leitao}.
The SABR model describes the evolution of an interest (or exchange) rate $S = \{S_t\}$ whose local volatility $\sigma = \{\sigma_t\}$ is also stochastic. It is given by the following It\^{o} SDE:
\begin{align}\label{eq:intro_SABR}
\begin{split}
dS_t & = \sqrt{1-\rho^2}\m \sigma_t (S_t)^\beta  dW_t^1 + \rho\m \sigma_t (S_t)^\beta  dW_t^2,\\[3pt]
d\sigma_t & = \alpha\m\sigma_t\m dW_t^2,
\end{split}
\end{align}
where $W$ is a standard two-dimensional Brownian motion and $\alpha,\beta,\rho$ are parameters.
For simplicity, we will set $\alpha = 1$ and $\beta = \rho = 0$. In Stratonovich form, (\ref{eq:intro_SABR}) becomes
\begin{align}\label{eq:intro_simple_SABR}
\begin{split}
dS_t & = \sigma_t\circ dW_t^1,\\
d\sigma_t & = -\frac{1}{2}\sigma_t\m dt + \sigma_t\circ dW_t^2.
\end{split}
\end{align}
Despite its simple form, the SDE (\ref{eq:intro_simple_SABR}) is challenging to accurately discretize since its Taylor expansion contains the non-Gaussian stochastic integral of $W^2$ against $W^1$,
\begin{align}\label{eq:intro_SABR_error}
S_{t+\Delta t} = S_t & + \sigma_t \big(W_{t+\Delta t}^1 - W_t^1\big) + \sigma_t \int_t^{t+\Delta t} \big(W_u^2 - W_t^2\big)  \circ dW_u^1  + R_{\Delta t}(t),
\end{align}
where the remainder is $\m R_{\Delta t}(t) = \sigma_t\int_t^{t+\Delta t} \big(e^{-\frac{1}{2}(u-t) + W_u^2 - W_t^2} - \big(1 + W_u^2 - W_t^2\big)\big)\circ dW_u^1\m$.
Therefore by (\ref{eq:intro_SABR_error}), approximating $S_{t+\Delta t}$ using one step of Heun's method, with a sufficiently small step size, will result in a mean squared error of $\frac{1}{4}\sigma_t^2 (\Delta t)^2 + O((\Delta t)^3)$.
As a consequence, we would expect that the overall accuracy of a numerical method can be improved by decreasing $\Delta t$ if $\sigma_t$ when large and increasing $\Delta t$ when $\sigma_t$ is small.\medbreak

We observe that solvers which use previsible steps of the form $\Delta t = \log(1+C \sigma_t^{-2})$ achieve an order of magnitude more accuracy than they do with constant step sizes. 
However, in this experiment, we are comparing the mean squared error of the SDE solvers -- which is very different from the pathwise convergence studied in the paper.\medbreak

In addition, we also see that similar improved convergence rates can be obtained if local errors are estimated via an embedded Runge$\m$-Kutta method and subsequently used by a ``Proportional-Integral'' (PI) adaptive step size controller based on \cite{burrage2004variable, ilie2015adaptive}. However, due to our counterxample, we modify the PI step size controller so that it does not ``skip over'' times where Brownian information has previously been sampled.
Based on the results of the experiment, we believe that our modified PI adaptive step size controller could be a promising technique for simulating non-commutative SDEs.
\medbreak

In Section \ref{sect:conclusion}, we will conclude the paper and briefly discuss potential future topics.

\subsection{Notation} In this subsection, we summarise the notation used in the paper.
We use $\|\cdot\|_2$ to denote the usual Euclidean norm. For a sequence of random variables $\{X_n\}$ in a probability space $(\Omega, \mathcal{F}, \mathbb{P})$, we say $X_n\rightarrow X$ in $L^2(\P)\m$ if $\,\E\big[\|X_n - X\|_2^2\big] \rightarrow 0$.
For $w : \R_+\rightarrow \R_+$, we say that $w(h)\sim o(h)$ if $\frac{w(h)}{h}\rightarrow 0$ as $h\rightarrow 0$ and $w(h)\sim O(h^p)$ with $p > 0$, if there exists a constant $C$ such that $w(h) \leq Ch^p$ for sufficiently small $h$.\medbreak

Throughout, $W = \{W_t\}_{t\in[0,T]}$ will be a standard $d$-dimensional Brownian motion. For a sequence of (random) no-skip partitions $\{D_n\}$ of $[0,T]$, we use $\widetilde{W}^n = \{\widetilde{W}_t^n\}_{t\in[0,T]}$ to denote a piecewise polynomial approximation of $W$ defined with respect to $\D_n\m$.
Most notably, this can be the usual piecewise linear discretization (see \cite{foster2020poly} for details). We write increments of the paths as $W_{s,t} := W_t - W_s$ and $\widetilde{W}_{s,t}^n := \widetilde{W}_t^n - \widetilde{W}_s^n$ for $s\leq t$.\medbreak

We will denote SDE solutions by $y = \{y_t\}_{t\in[0,T]}$ and CDE solutions by $\widetilde{x}^{\m n}$ or $\widetilde{y}^{\m n}$.
Each partition can be expressed as $\D_n = \{0 = t_0^n < t_1^n < \cdots < t_{K_n - 1}^n < t_{K_n}^n = T\}$ or $\{\D_n\} = \{h_k^n\}_{0\leq k < K_n}$ where $K_n\geq 1$ and $h_k^n := t_{k+1}^n - t_k^n$ represents the step size.
We often use the shorthand notation $W_k := W_{t_{k+1}} - W_{t_k}$ when defining numerical methods -- which are denoted by $Y = \{Y_k\}_{k\geq 0}$. Similarly, we write $H_k\sim\mathcal{N}(0,\frac{1}{12}h_k\m I_d)$ for the \textit{space-time L\'{e}vy area} of Brownian motion on $[t_k\m,t_{k+1}]$, see \cite[Definition 3.5]{foster2020poly}.\medbreak

Given a random vector $X$, we let $\sigma(X)$ denote the $\sigma$-algebra generated by $X$. Filtrations are denoted by $\{\F_n\}$ or $\{\G_n\}$ and give information about $W$ and partitions.
Blackboard bold symbols lie in $\R^{d\times d}$, such as iterated integrals of Brownian motion $\mathbb{W}_{s,t} = \big\{\int_s^t W_{s,u}^i \circ dW_u^j\big\}_{1\leq i,j\leq d}$. Bold symbols lie in the tensor algebra $\R\oplus \R^d\oplus \R^{d\times d}$, such as the \textit{Stratonovich enhanced} Brownian motion $\boldsymbol{W} = \{\boldsymbol{W}_{\hspace{-0.75mm}s,t}\} = \{(1, W_{s,t}, \mathbb{W}_{s,t})\}$.\medbreak

We use $\otimes$ to denote the usual tensor product with $a\otimes b := \{a_i\m b_j\}_{1\leq i, j\leq d} \in\R^{d\times d}$ for vectors $a = \{a_i\}_{i=1}^d$ and $b = \{b_i\}_{i=1}^d$ in $\R^d$. In particular, we can use this notation to define the iterated integrals $\m\mathbb{W}_{s,t} := \int_s^t W_{s,u} \otimes \circ\, dW_u\m$ and $\m\mathbb{\widetilde{W}}_{s,t}^n := \int_s^t \widetilde{W}_{s,u}^n \otimes d\widetilde{W}_u^n\m$.
There will also be notation for rough paths $\boldsymbol{X}$ and $\boldsymbol{Y}$ (see Definition \ref{def:rough_path_stuff}), namely the $\alpha$-H\"{o}lder norm $\|\boldsymbol{X}\|_{\alpha\text{-H\"{o}l;[s,t]}}\m$ and $\alpha$-H\"{o}lder metric $d_{\alpha\text{-H\"{o}l;[s,t]}}\big(\boldsymbol{X}, \boldsymbol{Y}\big)$ where $\alpha \in \big(\frac{1}{3}, \frac{1}{2}\big)$.
For a vector $\boldsymbol{X} = (1, X, \mathbb{X})\in \R \oplus \R^d\oplus \R^{d\times d}$, we also define $\|\boldsymbol{X}\| := \max\big(\|X\|_2, \|\mathbb{X}\|_2^\frac{1}{2}\big)$.
\smallbreak

\section{Main result}\label{sect:main_result}

We will first show the ``rough path'' convergence of piecewise polynomial approximations, defined on no-skip dyadic partitions, to Brownian motion.
We note that, unlike \cite[Section 13.3.2]{frizvictoir2010roughpaths}, which our analysis is based on, the sequence of nested partitions that we consider is allowed to depend on the Brownian motion.
However, due to their no-skip property, our nested partitions will still produce a filtration -- which will facilitate the martingale arguments we use to show convergence.
We note that this strategy for showing convergence was originally proposed in the doctoral thesis \cite[Chapter 6]{foster2020thesis}. However, in this paper, we introduce the notion of no-skip dyadic partitions and consider a more general class of adaptive step sizes which can depend on random variables that are independent of the Brownian motion.\medbreak

Before starting our analysis, we will make the following remark and definitions:

\begin{remark}[Writing a partition as a collection of step sizes]\label{def:paritions_as_steps} Given a partition $\D = \{0 = t_0 < \cdots < t_N = T\}$ of $[0,T]$, we can equivalently express $\D$ in terms of its step sizes as $\D = \{h_k\}_{0\leq k < N}$ where $h_k := t_{k+1} - t_k\m$.
\end{remark}\smallbreak

\begin{definition}[Enhancement function] For a $d$-dimensional Brownian motion $W$ and a no-skip partition $\D = \{h_k\}$ of $\m[0,T]$, we will define the following functions,\label{def:enhancement} 
\begin{align}
\mathcal{E}_{\W}\big(\{h_k\}, W\big) & := \bigg\{\big(h_k, \W_k\big) : \W_k = \bigg\{\int_{t_k}^{t_{k+1}} \Big(\frac{t-t_k}{t_{k+1} - t_k}\Big)^p\, d W_t\bigg\}_{0\m\leq\m p\m\leq\m q}\bigg\}\m,\label{eq:path_enhancement}\\[3pt]
\mathcal{E}_{\mathcal{Z}}\big(\{h_k\}, W\big) & := \Big\{\big(h_k, z_k\big) : z_k \in\R^m\Big\},\label{eq:area_enhancement}
\end{align}
where $q\in\{0,1,\cdots\,\}$ and $z_k$ is the random vector in $\R^m$ (with $m\geq 1$) independent of $W$. In particular, we can view $z_k$ as corresponding to all of the random vectors that were independent of $W$ in the definition of no-skip partition (Definition \ref{def:no_skip_dyadic_partition}). Combining functions (\ref{eq:path_enhancement}) and (\ref{eq:area_enhancement}), we define the ``enhancement'' function $\mathcal{E}$ as
\begin{align}\label{eq:enhancement}
\mathcal{E}\big(\{h_k\}, W\big) := \mathcal{E}_{\W}\big(\{h_k\}, W\big)\cup \mathcal{E}_{\mathcal{Z}}\big(\{h_k\}, W\big).
\end{align}
\end{definition}\medbreak
\begin{definition}[$W$-adapted partitions]
We say that a sequence of partitions $\{\D_n\}$ is adapted to a Brownian motion $W$ if the following $\sigma$-algebras give a filtration,\label{def:w_adapted}
\begin{align}\label{eq:filtration}
\F_n := \sigma\big(\mathcal{E}_{\W}\big(\D_n\m, W\big)\cup\G_n\big),
\end{align}
where the filtration $\{\G_n\}$ is defined by $\hspace{0.4mm}\G_1 := \sigma\big(\mathcal{E}_{\mathcal{Z}}\big(\D_1\m, W\big)\big)\hspace{-0.25mm}$ and for $n\geq 1$,
\begin{align}\label{eq:area_filtration}
\G_{n+1} := \sigma\big(\G_n\cup \mathcal{E}_{\mathcal{Z}}\big(\D_{n+1}\m, W\big)\big).
\end{align}
\end{definition}

Ultimately, it will be the $W$-adapted property that we require our sequence of partitions to satisfy in order to perform our analysis. Whilst the original sequence coming from the adaptive step sizes is not nested (and thus may not be $W$-adapted), it will be possible to find a suitable subsequence. We formalise this in Theorem \ref{thm:subsequence}.

\begin{theorem}[Subsequences of no-skip dyadic partitions can be $W$-adapted]
Let $\{\D_n\}_{n\geq 1}$ denote a sequence of no-skip dyadic partitions of $[0,T]$ defined with respect to a Brownian motion $W$. Then, almost surely, for any subsequence $\{\D_n^{\m\prime}\}$ there exists a further subsequence $\{\D_n^{\m\prime\prime}\}$ which is nested and $W$-adapted.\label{thm:subsequence}
\end{theorem}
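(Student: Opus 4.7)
The plan is to exploit the dyadic structure of the partitions to reduce the problem to a combinatorial question about finite subtrees of the infinite binary tree, and then to carry out a Cantor-style diagonal extraction. Each no-skip dyadic partition $\D$ can be encoded as a finite rooted subtree $\mathcal{T}(\D)$ of the binary tree of dyadic subintervals of $[0,T]$ (the root is $[0,T]$, and the children of $[a,b]$ are $[a,(a+b)/2]$ and $[(a+b)/2,b]$); the leaves of $\mathcal{T}(\D)$ are precisely the consecutive subintervals of $\D$, and since $\D$ is a partition of $[0,T]$ every non-leaf node of $\mathcal{T}(\D)$ has both of its children included. Under this encoding, $\D_1\subseteq \D_2$ if and only if $\mathcal{T}(\D_1)\subseteq \mathcal{T}(\D_2)$ as sets of nodes.

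With this encoding in hand, I would first enumerate the countably many nodes of the binary tree as $(v_i)_{i\geq 1}$ and, by iterated pigeonhole and a Cantor diagonal, extract a subsequence $\{\D_{n_k}'\}$ along which, for each $i$, the indicator $1[v_i \in \mathcal{T}(\D_{n_k}')]$ is eventually constant in $k$. Define the limiting (possibly infinite) subtree
\begin{equation*}
\mathcal{T}_\infty := \bigl\{v_i : v_i \in \mathcal{T}(\D_{n_k}') \text{ for all sufficiently large }k\bigr\}.
\end{equation*}
For every truncation depth $d\geq 1$ only finitely many nodes lie at depth $\leq d$, so the diagonal furnishes a finite threshold $M_d$ with $\mathcal{T}(\D_{n_k}') \cap \{\mathrm{depth}\leq d\} = \mathcal{T}_\infty\cap \{\mathrm{depth}\leq d\}$ for all $k\geq M_d$. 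I would then choose indices $k_j$ inductively so that (i) $k_j\geq M_{d_j}$, where $d_j$ is the maximal depth of $\mathcal{T}(\D_{n_{k_j}}')$, and (ii) $d_j$ is non-decreasing in $j$; by completeness of each subtree this forces $\mathcal{T}(\D_{n_{k_j}}') = \mathcal{T}_\infty\cap \{\mathrm{depth}\leq d_j\}$, and such subtrees are manifestly nested as $j$ grows. Writing $\D_j'' := \D_{n_{k_j}}'$, $W$-adaptedness then comes for free: every Brownian increment of the form appearing in $\mathcal{E}_{\W}(\D_j'',W)$ is a finite linear combination of the corresponding integrals over the refining subintervals of $\D_{j+1}''$, so $\sigma(\mathcal{E}_{\W}(\D_j'',W))\subseteq \sigma(\mathcal{E}_{\W}(\D_{j+1}'',W))$, and the independent-of-$W$ vectors collected in $\G_j$ form a filtration by Definition \ref{def:w_adapted}.

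The main obstacle is the third step of the construction. A priori, a sequence of finite subtrees of the infinite binary tree can form an antichain under inclusion, so no purely combinatorial argument can produce a nested subsequence; what must save the proof is the interplay between the completeness forced by the no-skip/partition structure and the measurability of $\D_n'$ with respect to the Brownian motion. Concretely, the inductive selection of $k_j$ succeeds provided that for each depth $d$ there are infinitely many $k$ with $\mathcal{T}(\D_{n_k}')\subseteq \mathcal{T}_\infty\cap\{\mathrm{depth}\leq d\}$, equivalently, that the "transient" nodes (those in some $\mathcal{T}(\D_{n_k}')$ but not in $\mathcal{T}_\infty$) do not persistently obstruct the extraction. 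Establishing this almost surely is where the probabilistic structure of the Brownian-driven adaptive refinement must be invoked, and I anticipate it to be the delicate technical point of the proof.
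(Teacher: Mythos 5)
Your combinatorial framing of partitions as complete finite subtrees of the dyadic binary tree is a natural way to think about the no-skip structure, and you are right that naively a sequence of finite subtrees can form an antichain. However, you have misdiagnosed what rescues the argument. You conjecture that some delicate probabilistic interplay between Brownian measurability and the refinement rule must be invoked, but the paper closes the gap with a purely pathwise arithmetic observation using the hypothesis $\mesh(\D_n)\to 0$ almost surely (which you never invoke, and which is essential): for a no-skip dyadic partition $\D$ whose smallest subinterval has length $2^{-k}T$, every time in $\D$ is an integer multiple of $2^{-k}T$; and any no-skip dyadic partition with mesh $\leq 2^{-k}T$ must contain every multiple of $2^{-k}T$ as a time point, hence contains $\D$. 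With this, the nested subsequence is extracted by a one-pass greedy construction -- given $\D^{\prime\prime}_n$, wait until a later partition in the subsequence has mesh at most the smallest subinterval of $\D^{\prime\prime}_n$ and take that as $\D^{\prime\prime}_{n+1}$. No Cantor diagonal or limiting tree $\mathcal{T}_\infty$ is needed; in fact under $\mesh(\D_n)\to 0$ the diagonal is vacuous because every node of the binary tree is eventually contained in every $\mathcal{T}(\D_n)$, so your $\mathcal{T}_\infty$ is just the full infinite tree and yields no information. Your inductive choice of $k_j$ with $k_j \geq M_{d_j}$ and $d_j$ the max depth of $\mathcal{T}(\D^{\prime}_{n_{k_j}})$ is also circular as written ($d_j$ depends on $k_j$ which is chosen using $M_{d_j}$), which is precisely the gap you flagged.

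On the $W$-adaptedness part: your sketch is in the right direction, but the claim $\sigma(\mathcal{E}_{\W}(\D^{\prime\prime}_j,W))\subseteq \sigma(\mathcal{E}_{\W}(\D^{\prime\prime}_{j+1},W))$ is slightly too strong. To reconstruct the integrals over the intervals of $\D^{\prime\prime}_j$ from those of $\D^{\prime\prime}_{j+1}$ one must first know which intervals to merge, i.e.\ one must know the random partition $\D^{\prime\prime}_j$ itself, and since $\D^{\prime\prime}_j$ can depend on the independent-of-$W$ vectors $z_k$, the paper establishes the weaker (and correct) inclusion $\sigma(\mathcal{E}_{\W}(\D^{\prime\prime}_j,W))\subseteq \sigma(\mathcal{E}_{\W}(\D^{\prime\prime}_{j+1},W)\cup \mathcal{E}_{\mathcal{Z}}(\D^{\prime\prime}_j,W))$, and then unwinds the definition of $\F_n$ in Definition~\ref{def:w_adapted} to see it is increasing. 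The identity you quote for splitting $\int_s^t(\tfrac{r-s}{t-s})^p\,dW_r$ over $[s,u]\cup[u,t]$ is exactly the one the paper uses.
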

\begin{proof}
Let $\omega\in\Omega$. Since $\D_n^{\m\prime}(\omega)$ is a dyadic partition, it contains a smallest subinterval of length $2^{-k_n(\omega)}T$ for some integer $k_n(\omega) \geq 0$. Since $\mesh(\D_n)\rightarrow 0$ almost surely, we have  $\mesh(\D_n^{\m\prime}(\omega))\rightarrow 0$ for almost all $\omega\in\Omega$. In particular, there will be a subsequent partition $\D_{n^\prime}^{\m\prime}(\omega)$ where its times are at most $2^{-k_n(\omega)}T$ apart. From the dyadic assumption, it immediately follows that $\D_n^{\m\prime}(\omega)$  is a subset of $\D_{n^\prime}^{\m\prime}(\omega)$. \medbreak

Hence, for $\omega\in\Omega$, we can define a further subsequence $\D_n^{\m\prime\prime}(\omega)$ of  $\D_n^{\m\prime}(\omega)$ as follows:
\begin{align*}
\D_1^{\m\prime\prime}(\omega) & := \D_1^{\m\prime}(\omega)\m,\\[3pt]
\D_{n+1}^{\m\prime\prime}(\omega) & := \D_{\varphi(n)}^{\m\prime}(\omega)\m,\,\,\text{ where }\,\, \varphi(n) := \inf\{m > n :  \D_n^{\prime\prime}(\omega)\subseteq \D_m^{\m\prime}(\omega)\}.
\end{align*}
Thus, all that remains is to show that $\{\D_n^{\m\prime\prime}\}$ is $W$-adapted (given by Definition \ref{def:w_adapted}).
To this end, we will first show that $\sigma\big(\mathcal{E}_{\W}\big(\D_n^{\m\prime\prime}, W\big)\big) \subseteq \sigma\big(\mathcal{E}_{\W}\big(\D_{n+1}^{\m\prime\prime}, W\big) \cup \mathcal{E}_{\mathcal{Z}}\big(\D_n^{\m\prime\prime}, W\big)\big)$.\medbreak

By construction, the subsequence $\{\D_n^{\m\prime\prime}\}$ is nested, i.e.~$\D_n^{\m\prime\prime}\subseteq\D_{n+1}^{\m\prime\prime}\m$. In particular, by merging a finite number of subintervals in $\D_{n+1}^{\m\prime\prime}\m$, it is possible to obtain $\D_n^{\m\prime\prime}\m$.
Suppose we merge the subintervals $[s,u]$ and $[u,t$]. Then, for $p\in\{0,\cdots,q\}$, we have
\begin{align*}
\int_{s}^{t} \bigg(\frac{r-s}{t-s}\bigg)^p\, d W_r & = \bigg(\frac{u-s}{t-s}\bigg)^p\int_{s}^{u} \bigg(\frac{r-s}{u-s}\bigg)^p\, d W_r + \bigg(\frac{t-u}{t-s}\bigg)^p\int_{u}^{t} \bigg(\frac{r-s}{t-u}\bigg)^p\, d W_r\\[1pt]
& = \bigg(\frac{u-s}{t-s}\bigg)^p\int_{s}^{u} \bigg(\frac{r-s}{u-s}\bigg)^p\, d W_r\\
&\hspace{7.5mm} + \bigg(\frac{t-u}{t-s}\bigg)^p \sum_{l=0}^p {p\choose l}\bigg(\frac{u-s}{t-u}\bigg)^{p-l}\int_{u}^{t} \bigg(\frac{r-u}{t-u}\bigg)^l\, d W_r\m.\\[-20pt]
\end{align*}
In other words, the integrals
$\big\{\int_s^t \big(\frac{r-s}{t - s}\big)^p\m d W_r : 0 \leq p\leq q\big\}$ can be recovered from $\big\{\int_s^u \big(\frac{r-s}{u - s}\big)^p\m d W_r\big\}$ and $\big\{\int_u^t \big(\frac{r-u}{t - u}\big)^p\m d W_r\big\}$. Moreover, since $\D_n^{\m\prime\prime}$ is a no-skip partition, it can be constructed using only the information that is available in $\sigma\big(\mathcal{E}\big(\D_n^{\m\prime\prime}, W\big)\big) = \sigma\big(\mathcal{E}_{\W}\big(\D_n^{\m\prime\prime}, W\big)\cup \mathcal{E}_{\mathcal{Z}}\big(\D_n^{\m\prime\prime}, W\big)\big)$ where $\mathcal{E}_{\mathcal{Z}}\big(\D_n^{\m\prime\prime}, W\big)$ contains the extra random variables in the construction of $\D_n^{\m\prime\prime}$ that are independent of $W$. Therefore, since $\D_{n+1}^{\m\prime\prime}$ is a refinement of $\D_n^{\m\prime\prime}\m$, it follows that we can recover the information in $\sigma\big(\mathcal{E}_{\W}\big(\D_n^{\m\prime\prime}\m, W\big)\big)$ from $\sigma\big(\mathcal{E}_{\W}\big(\D_{n+1}^{\m\prime\prime}\m, W\big)\cup \mathcal{E}_{\mathcal{Z}}\big(\D_n^{\m\prime\prime}, W\big)\big)$.
Hence, we obtain the following relationship: 
\begin{align}\label{eq:filtration_working}
\sigma\big(\mathcal{E}_{\W}\big(D_n^{\m\prime\prime}\m, W\big)\big)\subseteq\sigma\big(\mathcal{E}_{\W}\big(D_{n+1}^{\m\prime\prime}\m, W\big)\cup \mathcal{E}_{\mathcal{Z}}\big(\D_n^{\m\prime\prime}, W\big)\big).
\end{align}
From the definition of $W$-adapted, we want to show that the following is a filtration:
\begin{align*}
\F_n := \sigma\big(\mathcal{E}_{\W}\big(\D_n^{\m\prime\prime}\m, W\big)\cup\G_n\big),
\end{align*}
where the $\sigma$-algebra $\G_n$ is defined as $\G_{n+1} := \sigma\big(\G_n\cup \mathcal{E}_{\mathcal{Z}}\big(\D_{n+1}^{\m\prime\prime}\m, W\big)\big)$ for $n\geq 1$ with $\G_1 := \sigma\big(\mathcal{E}_{\mathcal{Z}}\big(\D_1^{\m\prime\prime}\m, W\big)\big)$.
By ``expanding'' $\F_{n+1}$ and using the property (\ref{eq:filtration_working}), we have
\begin{align*}
\F_{n+1} & = \sigma\big(\mathcal{E}_{\W}\big(\D_{n+1}^{\m\prime\prime}\m, W\big)\cup\G_{n+1}\big),\\
& = \sigma\big(\mathcal{E}_{\W}\big(\D_{n+1}^{\m\prime\prime}\m, W\big)\cup\G_n \cup \mathcal{E}_{\mathcal{Z}}\big(\D_{n+1}^{\m\prime\prime}\m, W\big)\big)\\
& = \sigma\big(\big(\mathcal{E}_{\W}\big(\D_n^{\m\prime\prime}\m, W\big)\cup\G_n\big)\cup\big(\mathcal{E}_{\W}\big(\D_{n+1}^{\m\prime\prime}\m, W\big)\cup \mathcal{E}_{\mathcal{Z}}\big(\D_{n+1}^{\m\prime\prime}\m, W\big)\big)\big)\\
 & = \sigma\big(\F_n\cup \mathcal{E}\big(\D_{n+1}^{\m\prime\prime}\m, W\big)\big).
\end{align*}
Thus, we see that $\{\F_n\}$ is an increasing sequence of $\sigma$-algebras (i.e.~a filtration).
\end{proof}

Since we shall be using martingale convergence arguments, we will show that the filtration $\{\F_n\}$ does contain enough information to determine the Brownian motion.\smallbreak

\begin{proposition}
Let $\{\D_n^{\m\prime\prime}\}$ be the subsequence of partitions and let $\{\F_n\m, \G_n\}$ be the filtrations given by Theorem \ref{thm:subsequence} and its proof. Then $\F_\infty := \bigcup_{n\geq 0}\F_n$ is equal to
\begin{align}\label{eq:limit_filtration}
\F_\infty = \sigma\Big(\big\{W_t : t\in[0,T]\big\}\cup \G_\infty\Big),
\end{align} 
where $\G_\infty := \bigcup_{n\geq 0}\G_n\m$.\label{prop:filtration}
\end{proposition}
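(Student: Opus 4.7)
The plan is to establish the two set inclusions separately. For the inclusion $\F_\infty \subseteq \sigma(\{W_t : t \in [0,T]\} \cup \G_\infty)$, I would argue that each generator of each $\F_n$ is a measurable function of the Brownian path and the auxiliary randomness. Since $\{\D_n^{\m\prime\prime}\}$ is a sequence of no-skip partitions, their times $\{t_k^n\}$ are constructed measurably from $W$ and the independent random vectors $\{z_k\}$ (which generate $\G_n$). Each scaled moment $\int_{t_k^n}^{t_{k+1}^n} \big(\frac{t-t_k^n}{t_{k+1}^n - t_k^n}\big)^p dW_t$ is then a measurable functional of the full Brownian path and these times, hence lies in $\sigma(\{W_t : t\in[0,T]\}\cup\G_\infty)$. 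Taking unions over $n$ gives one inclusion.

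For the reverse inclusion, clearly $\G_\infty\subseteq \F_\infty$, so the substance is in showing that $W_t$ is $\F_\infty$-measurable for every $t\in[0,T]$. The key observation is that the $p=0$ case of the enhancement function yields the raw increment $W_{t_{k+1}^n} - W_{t_k^n}$, so by telescoping from $W_0=0$ we recover $W_s$ for every $s\in\D_n^{\m\prime\prime}$, establishing that $W_s \in \F_n$. Now fix an arbitrary $t\in[0,T]$. Since $\mathrm{mesh}(\D_n^{\m\prime\prime}) \rightarrow 0$ almost surely (this property is inherited from the subsequence relation with $\{\D_n\}$), for almost every $\omega$ I can choose $s_n(\omega)\in\D_n^{\m\prime\prime}(\omega)$ with $|s_n - t|\leq \mathrm{mesh}(\D_n^{\m\prime\prime})\rightarrow 0$. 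By continuity of Brownian sample paths, $W_{s_n}\rightarrow W_t$ almost surely, so $W_t$ is the almost sure limit of $\F_n$-measurable random variables and therefore lies in $\F_\infty$.

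The main obstacle I anticipate is ensuring joint measurability in the first inclusion: because the partition times themselves are random, one must justify that the map $\omega\mapsto \int_{t_k^n(\omega)}^{t_{k+1}^n(\omega)} \big(\frac{t-t_k^n(\omega)}{t_{k+1}^n(\omega) - t_k^n(\omega)}\big)^p dW_t(\omega)$ is measurable with respect to $\sigma(\{W_t\}\cup\G_\infty)$. This can be handled by observing that the no-skip structure recursively defines each $t_k^n$ as a measurable functional of previously available Brownian information and the auxiliary randomness, so an induction on the level of the Brownian tree (combined with the standard fact that stochastic integrals over deterministic intervals can be localized to be jointly measurable in their endpoints) settles the point. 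A minor technical issue is the null set on which $\mathrm{mesh}(\D_n^{\m\prime\prime}) \not\to 0$; this is absorbed into the $\P$-completion of $\F_\infty$, which is standard and does not affect the identification of $\sigma$-algebras.
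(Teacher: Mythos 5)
Your proposal is correct and follows essentially the same approach as the paper: the reverse inclusion is obtained exactly as you describe, by extracting increments from the $p=0$ moments, telescoping to recover $W_s$ at partition points, and using $\mathrm{mesh}(\D_n^{\prime\prime})\to 0$ together with path continuity; the forward inclusion is the paper's observation that the Riemann--Stieltjes integrals are well-defined (via a.s.\ finite $\alpha$-H\"{o}lder norm) and uniquely determined by $W$, which is the same content as your measurability argument. The extra care you take about joint measurability and $\P$-completion is a reasonable amplification of the paper's more terse wording, but does not change the route.
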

\begin{proof}
Since the enhancement function produces $\W_k = \big\{\int_{t_k}^{t_{k+1}} \big(\frac{t-t_k}{t_{k+1} - t_k}\big)^p\, d W_t\big\}$, we see that $\W_k$ contains the Brownian motion's increment $\int_{t_k}^{t_{k+1}} 1\, d W_t = W_{t_{k+1}}- W_{t_k}$.\smallbreak

For any fixed $t\in[0,T]$, we can find the largest time point $t_k\in \D_n^{\m\prime\prime}$ such that $t_k \leq t$.
As $0\leq t-t_k < \mesh(\D_n^{\m\prime\prime})$, it follows from the assumption that $t_k\rightarrow t$ almost surely.
Moreover, since Brownian motion is continuous almost surely, we have $W_{t_k}\rightarrow W_t$ almost surely and we can thus obtain $W_t$ from $\F_\infty$. That is, $\sigma\big(W_t : t\in[0,T]\big)\subseteq \F_\infty$. It is also clear from the definition of the filtration $\F_n$ that $\G_n\subseteq \F_n$ and thus $\G_\infty\subseteq\F_\infty$.
\smallbreak

On the other hand, as $W$ has finite $\alpha\m$-H\"{o}lder norm almost surely, with $\alpha \in (0, \frac{1}{2})$, it follows that the Riemann-Stieltjes integrals $\big\{\int_{t_k}^{t_{k+1}} \big(\frac{t-t_k}{t_{k+1} - t_k}\big)^p\m d W_t\big\}$ exist and are uniquely determined by $W$. Thus, we have $\F_n\subseteq\sigma\big(\{W_t\}_{t\in[0,T]}\cup \G_\infty\big)$ as required.
\end{proof}

The filtration $\F_n$ will be used in our analysis. However, as it only corresponds to a subsequence, we will use the following proposition to show that the original sequence $\{\D_n\}$ of no-skip dyadic partitions gives a convergent sequence of SDE approximations.\smallbreak

\begin{proposition}
Let $\{\D_n\}$ denote a sequence of no-skip dyadic partitions with $\mesh(\D_n)\rightarrow 0$ almost surely. Let $\{\D_n^{\m\prime\prime}\}$ be the subsequence given by Theorem \ref{thm:subsequence} and let $Y^n$ be the approximation of the SDE (\ref{eq:strat_SDE}) obtained using the partition $\D_n\m$.
Suppose that the approximations corresponding to $\{\D_n^{\m\prime\prime}\}$ converge to the SDE solution $y$ with respect to a metric $d(\m\cdot, \cdot)$. Then $\{Y^n\}$ converges, i.e.~$d(Y^n, y)\rightarrow 0$ as $n\rightarrow\infty$.\label{prop:subsequence}
\end{proposition}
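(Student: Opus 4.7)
The plan is to invoke the standard metric-space fact that a sequence $\{a_n\}$ converges to $a$ if and only if every subsequence of $\{a_n\}$ admits a further subsequence that converges to $a$. Combined with Theorem \ref{thm:subsequence}, which produces nested, $W$-adapted sub-subsequences out of arbitrary subsequences of no-skip dyadic partitions, this characterization is essentially tailored to the situation at hand.

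Concretely, I would argue by contradiction. Suppose that $d(Y^n, y) \nrightarrow 0$ on a set of positive probability. Then on this event there exist $\varepsilon > 0$ and a (random) subsequence $\{n_k\}$ with $d(Y^{n_k}, y) \geq \varepsilon$ for every $k$. The partitions $\{\D_{n_k}\}$ are still no-skip dyadic partitions whose meshes tend to zero almost surely, so Theorem \ref{thm:subsequence} applies to them: we can extract a further subsequence $\{\D_{n_{k_j}}\}$ which is nested and $W$-adapted. The hypothesis of the proposition, read as the conclusion of the convergence analysis from Section \ref{sect:main_result} that depends only on the nested and $W$-adapted structure of the partitions (and not on the particular sequence from which they were extracted), then yields $d(Y^{n_{k_j}}, y) \to 0$ as $j \to \infty$. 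This contradicts $d(Y^{n_{k_j}}, y) \geq \varepsilon$, and hence $d(Y^n, y) \to 0$ almost surely.

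The only subtle point, and the one I would emphasise in the write-up, is that the stated hypothesis must be applicable to the $W$-adapted subsequence extracted from \emph{any} no-skip dyadic sequence with vanishing mesh, not merely to the particular subsequence produced from the original $\{\D_n\}$. This is essentially automatic from how the main convergence theorem is proved: the rough-path convergence of piecewise polynomial approximations, the Universal Limit Theorem argument, and the $o(h)$ perturbation estimate all use only the fact that the partitions are nested and give rise to the filtration $\{\F_n\}$. Once this uniformity is recorded, the subsequence argument above goes through unchanged and the proposition follows immediately; there is no substantive analytic obstacle beyond this bookkeeping.
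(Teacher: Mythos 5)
Your proof is correct and takes essentially the same approach as the paper: argue by contradiction, extract a subsequence bounded away from $y$, apply Theorem~\ref{thm:subsequence} to obtain a further nested $W$-adapted subsequence, and derive a contradiction from the hypothesized convergence along $W$-adapted subsequences. Your closing remark about the hypothesis needing to apply to the $W$-adapted subsequence extracted from an arbitrary no-skip dyadic sequence (not just the original $\{\D_n\}$) is a genuine and useful clarification of what the paper's somewhat terse proof implicitly assumes.
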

\begin{proof}
Assume for a contradiction that $\{Y^n\}$ does not converge to $y$. Then, there exists $\varepsilon > 0$, for which we can find a subsequence $\{\widetilde{Y}^n\}$ with $d(\widetilde{Y}^n, y) \geq \varepsilon$ for all $n$. In particular, this means that all subsequences of $\{\widetilde{Y}^n\}$ will not converge. By letting $\D_n^{\m\prime}$ be the partition which was used by $\widetilde{Y}^n$, we obtain the desired contradiction.
\end{proof}\medbreak

Therefore, by the above proposition, it is enough to show convergence of only the SDE approximations obtained along the nested subsequence of partitions $\{\D_n^{\m\prime\prime}\}$.
Hence, we can use the below remark -- which we have boxed to highlight its importance.
\vspace{-7.5mm}

\begin{remark}
\begin{tabular}{|c|}
 \multicolumn{1}{c}{} \\[1pt]
 \hline\\[-9pt]
 In our analysis, we will assume without loss of generality\\[2pt]
 (by Proposition \ref{prop:subsequence}) that $\{\D_n\}$ is nested and $W$-adapted.\\[2pt]
 \hline
\end{tabular}
\end{remark}\medbreak

Having now established the filtration $\F = \{\F_n\}_{n\m\geq\m 1}$ and shown Proposition \ref{prop:filtration},
we will proceed to show that the corresponding sequence of (adaptive) approximations $\{\widetilde{W}^n\}_{n\m\geq\m 0}$ converges to Brownian motion on the interval $[0,T]$ in a \textit{rough path} sense.

\subsection{Rough path convergence of piecewise polynomial approximations} In order to show our approximations of Brownian motion converge in a rough path sense (i.e.~in an $\alpha\m$-H\"{o}lder metric), we first perform a standard mean squared analysis.\smallbreak
 
\begin{theorem}[Nested piecewise unbiased approximations of Brownian motion]
Let $\{\D_n\}$ denote a sequence of nested and $W$-adapted partitions with corresponding filtration $\{\F_n\}$ given by (\ref{eq:filtration}). In addition, suppose that $\m\mesh(\D_n) \rightarrow 0$ almost surely.
We define a sequence of approximations $\{\widetilde{W}^n\}$ for the Brownian motion on $[0,T]$ as\label{thm:basic_approximation}
\begin{align}\label{eq:brownian_approx}
\widetilde{W}_t^n := \E\big[W_t \,|\,\F_n\big]\m.
\end{align}
Then for $t\in[0,T]$, $\widetilde{W}_t^n\rightarrow W_t$ almost surely and
\begin{align}\label{eq:mean_squared_conv}
\E\Big[\big(\widetilde{W}_t^n - W_t\big)^2\m\Big]\rightarrow 0.
\end{align}
\end{theorem}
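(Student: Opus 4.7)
My plan is to prove this by recognising that $\{\widetilde{W}_t^n\}_{n \ge 1}$ is a closed martingale with respect to the filtration $\{\F_n\}$, and then identifying its limit via Proposition \ref{prop:filtration}.

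First I would verify the martingale structure. The definition $\widetilde{W}_t^n := \E[W_t \mid \F_n]$ and the fact that $\F_n \subseteq \F_{n+1}$ (established by Theorem \ref{thm:subsequence} and carried forward via the boxed remark) give the martingale property
\begin{align*}
\E\bigl[\widetilde{W}_t^{n+1}\,\big|\,\F_n\bigr] = \E\bigl[\E[W_t \mid \F_{n+1}]\,\big|\,\F_n\bigr] = \E[W_t\mid \F_n] = \widetilde{W}_t^n
\end{align*}
by the tower property. Moreover, since $W_t \in L^2(\P)$, conditional Jensen's inequality gives
\begin{align*}
\E\bigl[(\widetilde{W}_t^n)^2\bigr] \le \E[W_t^2] = t,
\end{align*}
so the martingale $\{\widetilde{W}_t^n\}$ is $L^2$-bounded (and hence uniformly integrable).

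Next I would apply Doob's martingale convergence theorem in $L^2$: the sequence $\widetilde{W}_t^n$ converges almost surely and in $L^2$ to the limit $\E[W_t \mid \F_\infty]$ where $\F_\infty = \bigcup_{n \ge 1} \F_n$ (or more precisely $\sigma(\bigcup_n \F_n)$).

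Finally, I need to identify this limit with $W_t$ itself. This is where the mesh assumption and Proposition \ref{prop:filtration} come in: the proposition tells us that $\F_\infty = \sigma(\{W_s : s \in [0,T]\} \cup \G_\infty)$, so in particular $W_t$ is $\F_\infty$-measurable. As an integrable $\F_\infty$-measurable random variable equals its own conditional expectation given $\F_\infty$, we obtain $\E[W_t \mid \F_\infty] = W_t$, completing the proof of both the almost sure convergence and the $L^2$ convergence (\ref{eq:mean_squared_conv}).

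The only non-routine step is the identification of the limit $\E[W_t \mid \F_\infty]$ with $W_t$, but this has already been handled by Proposition \ref{prop:filtration} — whose proof exploited exactly the fact that Brownian increments are captured in $\mathcal{E}_{\W}$ together with the continuity of $W$ and $\mesh(\D_n) \to 0$. Everything else is a direct appeal to Doob's theorem, so I would expect this proof to be short.
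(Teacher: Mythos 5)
Your proof is correct and follows essentially the same route as the paper: establish the martingale property and $L^2$-boundedness via conditional Jensen, invoke Doob's martingale convergence theorem to obtain convergence to $\E[W_t \mid \F_\infty]$, and then use Proposition \ref{prop:filtration} together with the mesh assumption to identify the limit as $W_t$. No substantive differences.
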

\begin{proof}
As $\F_n$ is a filtration, it follows that $\big\{\widetilde{W}^n\big\}  = \big\{\E[W \,|\,\F_n]\big\}$ is a martingale.
Moreover, each $\widetilde{W}^n$ is square-integrable as by Jensen's inequality and the tower law,
\begin{align*}
\E\Big[\big(\widetilde{W}_t^n\big)^2\Big]  = \E\Big[\E\big[W_t \,|\,\F_n\big]^2\Big] \leq \E\Big[\E\big[W_t^2 \,|\,\F_n\big]\Big] = \E\big[W_t^2\big] = t.
\end{align*}
Therefore, by Doob's martingale convergence theorem, we have that for each $t\in[0,T]$,
\begin{align}\label{eq:doob_convergence}
\E\big[W_t \,|\,\F_n\big] \rightarrow \E\big[W_t \,|\,\F_\infty\big]\m,
\end{align}
almost surely and in $L^2(\P)$. Since $\m\mesh(\D_n) \rightarrow 0$ almost surely, we can see that $\sigma\big(W_t : t\in[0,T]\big)\subseteq\F_\infty$ by Proposition \ref{prop:filtration}. The result now directly follows by (\ref{eq:doob_convergence}). It is worth noting that any additional information given by $\G_n$ is independent of $W$, and therefore will have no effect on the conditional expectations within our proof.
\end{proof}
\begin{remark}
If $q=0$, then $\W_n = \{W_{t_{n+1}} - W_{t_n}\}$ and each $\widetilde{W}^n$ is the piecewise linear path that agrees with the Brownian motion $W$ at the discretization points $\{t_n\}$ given by the partition $\D_n$.
More generally, $\widetilde{W}^n$ is a piecewise degree $q+1$ polynomial that matches ``moment'' information of $W$ on $\D_n$ (see \cite[Theorem 2.4]{foster2020poly} for details).
\end{remark}\smallbreak
In order to utilise results from rough path theory, we extend the above theorem to include the second iterated integrals of Brownian motion (or equivalent L\'{e}vy areas).\smallbreak

\begin{theorem}[Convergence of nested unbiased approximations for L\'{e}vy area] Let $\{\widetilde{W}^n\}$ denote the sequence of piecewise polynomial approximations for $W$ given by Theorem \ref{thm:basic_approximation}, with the same assumptions on the partitions $\{\D_n\}$ and filtration $\{\F_n\}$. We consider the following sequence of approximate L\'{e}vy areas $\{\widetilde{A}_{s,t}^n\}$ defined on $[s,t]$,\label{thm:basic_area_approximation}
\begin{align}\label{eq:approx_levy_area}
\widetilde{A}_{s,t}^n & := \bigg\{\int_s^t \big(\widetilde{W}_{s,r}^n\big)^i\, d\big(\widetilde{W}_r^n\big)^j - \int_s^t \big(\widetilde{W}_{s,r}^n\big)^j\, d\big(\widetilde{W}_r^n\big)^i\bigg\}_{1\m\leq\m i,\m j\m\leq\m d}\m,
\end{align}
where $0\leq s\leq t\leq T$ and $\widetilde{W}_{s,r}^n := \widetilde{W}_r^n - \widetilde{W}_s^n$ denotes the path increment of $\widetilde{W}^n$. Then there exists a random variable $\widetilde{A}_{s,t}$ such that $\widetilde{A}_{s,t}^n\rightarrow \widetilde{A}_{s,t}$ almost surely and in $L^2(\P)$.
Furthermore $\widetilde{A}_{s,t}$ coincides almost surely with the ```L\'{e}vy area'' of Brownian motion,
\begin{align}\label{eq:levy_area}
A_{s,t} := \bigg\{\int_s^t W_{s,r}^{\m i} \circ dW_r^{\m j} - \int_s^t W_{s,r}^{\m i}\circ dW_r^{\m j}\bigg\}_{1\m\leq\m i,\m j\m\leq\m d}\m,
\end{align}
where $W_{s,r} := W_r - W_s\m$.
\end{theorem}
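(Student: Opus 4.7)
The plan is to show that $\{\widetilde{A}^n_{s,t}\}_{n\geq 0}$ is an $L^2$-bounded martingale with respect to $\{\F_n\}$, apply Doob's martingale convergence theorem to obtain a limit $\widetilde{A}_{s,t}$, and then identify this limit with the true Stratonovich Lévy area $A_{s,t}$ via a Wong-Zakai type argument. For clarity we assume $s, t$ eventually lie in $\D_n$; the general case follows by a density argument since $\bigcup_n \D_n$ is almost surely dense in $[0,T]$.

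The first step is to verify the martingale property $\E[\widetilde{A}^{n+1}_{s,t}\mid \F_n] = \widetilde{A}^n_{s,t}$. Applying Chen's identity for the smooth paths $\widetilde{W}^n$ and $\widetilde{W}^{n+1}$ relative to $\D_n\cap[s,t]$, and using that both approximations agree with $W$ at every $t_k\in\D_n$ (the $p = 0$ moment integral recovers Brownian increments, so $\widetilde{W}^n_{t_k} = W_{t_k}$ by Theorem \ref{thm:basic_approximation}), the cross-term sums cancel and one is left with
\begin{align*}
\widetilde{A}^{n+1}_{s,t} - \widetilde{A}^n_{s,t} = \sum_{[u,v]\in\D_n\cap[s,t]}\big(\widetilde{A}^{n+1}_{u,v} - \widetilde{A}^n_{u,v}\big).
\end{align*}
Hence it suffices to show $\E[\widetilde{A}^{n+1}_{u,v}-\widetilde{A}^n_{u,v}\mid \F_n] = 0$ on each $[u,v]\in \D_n$. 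This reduces to a finite-dimensional Gaussian computation: conditionally on $\F_n$, the moment integrals on the finer subdivision of $[u,v]$ are Gaussian with means given by the corresponding moments of $\widetilde{W}^n$, and the antisymmetric structure of the Lévy area combined with the orthogonal projection property of the piecewise polynomial approximation \cite[Theorem 2.4]{foster2020poly} forces the quadratic conditional expectation to collapse to $\widetilde{A}^n_{u,v}$. A direct verification in the piecewise linear case ($q = 0$) via a Brownian-bridge decomposition $W_{u,w} = \frac{1}{2}W_{u,v} + \xi$ with $\E[\xi\mid\F_n] = 0$ confirms the cancellation.

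An elementary Gaussian estimate then yields $\E\big[\|\widetilde{A}^{n+1}_{u,v}-\widetilde{A}^n_{u,v}\|_2^2\big] = O((v-u)^2)$, so telescoping along the mesh gives $\sup_n \E[\|\widetilde{A}^n_{s,t}\|_2^2] \leq C(t-s)$. Doob's $L^2$-martingale convergence theorem now provides an $\F_\infty$-measurable limit $\widetilde{A}_{s,t}$ with $\widetilde{A}^n_{s,t}\to \widetilde{A}_{s,t}$ almost surely and in $L^2(\P)$.

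The main obstacle is identifying $\widetilde{A}_{s,t} = A_{s,t}$. Rewriting via Chen's identity,
\begin{align*}
\widetilde{A}^n_{s,t} = \sum_{[u,v]\in\D_n\cap[s,t]}\widetilde{A}^n_{u,v} + \sum_{[u,v]\in\D_n\cap[s,t]}\big(W_{s,u}\otimes W_{u,v} - W_{u,v}\otimes W_{s,u}\big),
\end{align*}
I plan to show that the first sum vanishes in $L^2(\P)$ (each term has variance $O((v-u)^2)$, which telescopes to $O(\mesh(\D_n))$), while the second sum is a Riemann-sum approximation converging in $L^2(\P)$ to the antisymmetric part of the Stratonovich iterated integral $\int_s^t W_{s,r}\otimes \circ\, dW_r$, namely $A_{s,t}$. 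This is a classical Wong-Zakai type result (cf.~\cite[Chapter 13]{frizvictoir2010roughpaths}) which extends routinely to our random but nested, mesh-vanishing partitions. Matching the two $L^2$ limits gives $\widetilde{A}_{s,t} = A_{s,t}$ almost surely, completing the proof.
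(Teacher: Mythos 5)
Your high-level structure (Doob martingale convergence) matches the paper's, but you miss the central insight and, as a result, open two genuine gaps in the argument.

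The paper's proof does not merely verify that $\{\widetilde{A}^n_{s,t}\}$ is \emph{some} $L^2$-bounded martingale; it establishes the much stronger identity $\int_s^t (\widetilde{W}^n_{s,u})^i\, d(\widetilde{W}^n_u)^j = \E\big[\int_s^t W^i_{s,u}\circ dW^j_u \,\big|\, \F_n\big]$, i.e.\ the approximate L\'{e}vy area is a \emph{closed} Doob martingale. This is obtained by showing that, for $i\neq j$, the processes $W^i$ and $W^j$ are $\F_n$-conditionally independent between adjacent partition points (via the Brownian scaling, polynomial-expansion, and independent-coordinate properties), so that conditional expectations of Riemann-sum products factor, and then passing to the $L^2$ limit along a \emph{deterministic} refining partition of $[s,t]$ (standard It\^{o} theory). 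Once this identity is in hand, $L^2$-boundedness is automatic from Jensen, the Doob limit is $\E[A_{s,t}\,|\,\F_\infty]$, and identification of the limit with $A_{s,t}$ is immediate because $\F_\infty$ contains $\sigma(W_t : t\in[0,T])$ (Proposition \ref{prop:filtration}).

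Your proposal loses all three of these automatic consequences by treating the martingale as merely $L^2$-bounded rather than closed. First, your $L^2$-boundedness argument does not go through as written: orthogonality of martingale increments gives $\E[\|\widetilde{A}^n_{s,t}\|_2^2] = \E[\|\widetilde{A}^0_{s,t}\|_2^2] + \sum_{m<n}\E[\|\widetilde{A}^{m+1}_{s,t}-\widetilde{A}^m_{s,t}\|_2^2]$, and bounding each increment by $C(t-s)\mesh(\D_m)$ only yields a finite uniform bound if $\sum_m\mesh(\D_m)<\infty$, which is not assumed (only $\mesh(\D_n)\to 0$). Second, and more seriously, the identification step assumes exactly what the theorem must prove. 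Your claim that the cross-term Riemann sum $\sum_{[u,v]\in\D_n}(W_{s,u}\otimes W_{u,v} - W_{u,v}\otimes W_{s,u})$ converges in $L^2(\P)$ to $A_{s,t}$ ``extends routinely'' to the random partitions $\D_n$ is not routine: the partitions here are \emph{non-previsible} (the decision where to place $v$ may depend on $W$ inside $[u,v]$), so this is not an It\^{o} Riemann sum along a previsible sequence of stopping times, and the cited Wong--Zakai results in \cite[Chapter 13]{frizvictoir2010roughpaths} apply only to deterministic (or at least previsible) partitions. Indeed, the counterexample in Section \ref{sect:counterexample} shows that precisely this kind of Riemann sum can converge to a biased limit when the partition skips; the no-skip/nested/$W$-adapted structure is exactly what the paper's conditional-independence argument exploits to rule this out, and cannot simply be invoked as folklore. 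Your reduction to the two-point Brownian-bridge cancellation for $q=0$ is a correct check of the martingale property, but that alone does not identify the $L^2$ limit. To repair the proof you would need to replace the Wong--Zakai appeal with the conditional-expectation identity above, at which point your argument essentially collapses onto the paper's.
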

\begin{proof}
When $i = j$, we see that $\big(\widetilde{A}_{s,t}^n\big)^{i,j} = A_{s,t}^{i,j} = 0$ and there is nothing to prove.
Since $\big(\widetilde{A}_{s,t}^n\big)^{i,j} = - \big(\widetilde{A}_{s,t}^n\big)^{j,i}$ and $A_{s,t}^{i,j} = - A_{s,t}^{j,i}$, it suffices to consider the case $i < j$.
Let $n\geq 0$ be fixed and suppose $t_k$ and $t_{k+1}$ are adjacent points in the partition $\D_n\m$.\medbreak

We first note the following four properties about Brownian motion:\smallbreak
\begin{enumerate}
\item Brownian motion has independent increments that are normally distributed. Thus $\{W_{t_k, t}\}_{t\in[t_k,\m t_{k+1}]}$ is independent of $\{W_{t}\}_{t\in[0,\m t_k]}$ and $\{W_{t_{k+1}, t}\}_{[t_{k+1},\m T]}\m$.\smallbreak
\item Brownian motion has independent coordinate processes. That is, when $i\neq j$, $W^i$ and $W^j$ are independent.\smallbreak

\item For $c > 0$, the process $t\mapsto \frac{1}{c} W_{c^2 t}$ has the same law as a Brownian motion.\smallbreak

\item For a fixed $q\geq 0$, the process $\big\{W_t - \E\big[W_t\m | \m\big\{\int_0^1 s^{\m p}\,dW_s\big\}_{0\m\leq\m p\m\leq\m q}\m\big]\big\}_{t\in[0, 1]}$ is a centered Gaussian process on $[0,1]$ and is independent of $\big\{\int_0^1 s^{\m p}\,dW_s\big\}_{0\m\leq\m p\m\leq\m q}$. This is a direct consequence of a polynomial expansion of Brownian motion (see, for example, \cite[Theorem 2.2]{foster2020poly} or \cite[Proposition 1.3]{habermann2021poly} for more details).\medbreak
\end{enumerate}
Since the information currently known about $W$ is given in $\F_n = \sigma\big(\mathcal{E}\big(\D_n\m, W\big)\cup \G_n\big)$ and $\D_n$ contains no further subintervals of $[t_k,\m t_{k+1}]$ when $t_k\m, t_{k+1}\in \D_n$ are adjacent, it follows from the first and fourth properties of Brownian motion that
\begin{align*}
\widetilde{W}_{t, t_k}^n = \E\bigg[W_{t_k, t} \,\Big|\, \int_{t_k}^{t_{k+1}} (t-t_k)^{\m p}\,dW_t\m,\m 0\leq p\leq q\bigg]\m,
\end{align*}
for $t\in [t_k,\m t_{k+1}]$. By applying a change of variables $r := \frac{1}{t_{k+1}-t_k}(t-t_k)$ it follows from the last three properties of Brownian motion that the $i$-th and $j$-th coordinates of
\begin{align*}
\Bigg\{\frac{W_{t_k, (t_{k+1}-t_k)r} - \E\Big[W_{t_k, (t_{k+1}-t_k)r} \,\big| \,\big\{\int_0^1 \big((t_{k+1}-t_k)r\m\big)^p\m dW_{(t_{k+1}-t_k)r}\big\}_{0\m\leq\m p\m\leq\m q}\Big]}{\sqrt{t_{k+1}-t_k}}\Bigg\}_{r\in[0, 1]}\hspace{-0.1mm},
\end{align*}
are independent centered Gaussian processes. Hence, the $i$-th and $j$-th coordinates of
\begin{align*}
\,\bigg\{\frac{1}{\sqrt{t_{k+1}-t_k}}\Big(W_{t_k, r(t_{k+1} - t_k)} - \widetilde{W}_{t_k, r(t_{k+1} - t_k)}^n\Big)\bigg\}_{r\in[0,1]}\m,
\end{align*}
are independent centered Gaussian processes. As before, we note that any additional information in $\G_n$ is independent of $W$ and does not affect conditional expectations.
Therefore, it follows that $\{W_{t_k, t}^i\}_{t\in[t_k,\m t_{k+1}]}$ and $\{W_{t_k, t}^j\}_{t\in[t_k,\m t_{k+1}]}$ are $\F_n$-conditionally independent processes and
\begin{align*}
\widetilde{W}_t^n = \E\big[W_{t_k, t} \m|\m\F_n\big].
\end{align*}
Thus for a sequence of uniform partitions $\{\triangle_m\}$ of $[s,t]$ with $\mesh(\triangle_m)\rightarrow 0$, we have
\begin{align}\label{eq:riemann_sum}
\E\bigg[\sum_{t_l\in\triangle_m}W_{s, t_l}^i W_{t_l, t_{l+1}}^j  \m\Big|\m\F_n\bigg] & = \sum_{t_l\in\triangle_m}\E\big[W_{s, t_l}^i\m|\m\F_n\big]\m\E\big[W_{t_l, t_{l+1}}^j\m|\m\F_n\big]\nonumber\\
& = \sum_{t_l\in\triangle_m}\big(\widetilde{W}_{s, t_l}^n\big)^i\big(\widetilde{W}_{t_l, t_{l+1}}^n\big)^j.
\end{align}
Since It\^{o} integrals can be defined as the limit of their Riemann sum
approximations, with convergence taking place is in the standard $L^2(\P)$ sense, it follows that
\begin{align*}
&\E\bigg[\bigg(\E\bigg[\sum_{t_l\in\triangle_m}W_{s, t_l}^i W_{t_l, t_{l+1}}^j  \m\Big|\m\F_n\bigg]-\E\bigg[\int_s^t W_{s, u}^i \, dW_u^j \m\Big|\m\F_n\bigg]\bigg)^2\m\bigg]\\
&\mm\leq \E\bigg[\bigg(\E\bigg[\sum_{t_l\in\triangle_m}W_{s, t_l}^i W_{t_l, t_{l+1}}^j\bigg]-\E\bigg[\int_s^t W_{s, u}^i \, dW_u^j\bigg]\bigg)^2\m\bigg]\rightarrow 0,
\end{align*}
by Jensen's inequality and the tower property. Since $W^i$ and $W^j$ are independent, we also observe that the It\^{o} and Stratonovich formulations of the integral coincide. That is, $\int_s^t W_{s, u}^i\m dW_u^j = \int_s^t W_{s, u}^i \circ dW_u^j\m$. Hence, by taking the limit of (\ref{eq:riemann_sum}), we have
\begin{align}\label{eq:levy_area_as_martingale}
\E\bigg[\int_s^t W_{s, u}^i \circ dW_u^j \,\Big|\,\F_n\bigg] = \int_s^t \big(\widetilde{W}_{s,u}^n\big)^i d\big(\widetilde{W}_u^n\big)^j.
\end{align}
As before, since $\{\F_n\}$ is a filtration, it follows that the sequence (\ref{eq:levy_area_as_martingale}) is a martingale. Thus, by applying Doob's martingale convergence theorem to these integrals, we have
\begin{align}\label{eq:integral_convergence}
\int_s^t \big(\widetilde{W}_{s,u}^n\big)^i d\big(\widetilde{W}_u^n\big)^j \rightarrow \E\bigg[\int_s^t W_{s, u}^i \circ dW_u^j \,\Big|\,\F_\infty\bigg] = \int_s^t W_{s, u}^i \circ dW_u^j\m,
\end{align}
as $n\rightarrow\infty$ almost surely and in $L^2(\P)$.
\end{proof}

We shall now introduce the concepts of rough paths and $\alpha\m$-H\"{o}lder regularity. These are the two key ingredients needed to establish SDE (or CDE) solutions as Lipschitz continuous functions of the Brownian motion (or driving path), see \cite{friz2024rsdes, lyons2007notes}.\smallbreak

\begin{definition}\label{def:rough_path_stuff}
A rough path is a continuous function $\boldsymbol{X} : \Delta_T \rightarrow \R\oplus \R^d\oplus \R^{d\times d}$ where $\Delta_T := \{(s,t) : 0\leq s \leq t\leq T\}$. The rough path $\boldsymbol{X} = (1, X, \mathbb{X})$ is said to be $\alpha$-H\"{o}lder continuous if $\m\|\boldsymbol{X}\|_{\alpha\text{-H\"{o}l;[0,T]}} < \infty$ where the norm $\|\cdot\|_{\alpha\text{-H\"{o}l;[s,t]}}$ is defined as
\begin{align}\label{eq:holder_norm}
\|\boldsymbol{X}\|_{\alpha\text{-H\"{o}l;[s,t]}} := \max\bigg(\sup_{s\leq u < v\leq t} \frac{\|X_{u,v}\|_2}{|v-u|^{\alpha}}\,, \sup_{s\leq u < v\leq t} \frac{\|\mathbb{X}_{u,v}\|_2}{|v-u|^{2\alpha}}\bigg),
\end{align}
is finite. Similarly, we define the $\alpha$-H\"{o}lder metric between rough paths $\boldsymbol{X}$ and $\boldsymbol{Y}$ as
\begin{align}\label{eq:holder_metric}
d_{\alpha\text{-H\"{o}l;[s,t]}}\big(\boldsymbol{X}, \boldsymbol{Y}\big) := \max\bigg(\sup_{s\leq u < v\leq t}\hspace{-0.75mm} \frac{\|X_{u,v} - Y_{u,v}\|_2}{|v-u|^{\alpha}}\,,\hspace{-0.5mm} \sup_{s\leq u < v\leq t}\hspace{-0.75mm} \frac{\|\mathbb{X}_{u,v} - \mathbb{Y}_{u,v}\|_2}{|v-u|^{2\alpha}}\bigg).
\end{align}
\end{definition}
\begin{definition} We define a ``homogeneous'' norm $\|\cdot\|$ for path increments as\label{def:homogeneous_norm}
\begin{align}\label{eq:homogeneous_norm}
\|\boldsymbol{X}\| := \max\Big(\|X\|_2, \|\mathbb{X}\|_2^\frac{1}{2}\Big),
\end{align}
where $\boldsymbol{X} = (1, X, \mathbb{X}) \in \R\oplus \R^d\oplus \R^{d\times d}$ and $\|\cdot\|_2$ denotes the standard Euclidean norm.
\end{definition}\smallbreak
Now we have introduced some key definitions from rough path theory, we can strengthen the pointwise and mean squared convergence established in Theorem \ref{thm:basic_approximation}.
To do so, we will largely follow the rough path analysis detailed in \cite[Section 13.3.2]{frizvictoir2010roughpaths}.
\smallbreak

\begin{theorem}[Rough path convergence of nested unbiased approximations]\label{thm:rough_path_convergence}
Let $\boldsymbol{\widetilde{W}}^n\hspace{-1mm} = \{\boldsymbol{\widetilde{W}}_{\hspace{-0.75mm}s,t}^n\}_{0\leq s\leq t\leq T}$ be the rough path defined by the piecewise polynomial $\widetilde{W}^n\hspace{-0.75mm}$ as
\begin{align*}
\boldsymbol{\widetilde{W}}_{\hspace{-0.75mm}s,t}^n := \big(1, \widetilde{W}_{s,t}^n\m, \mathbb{\widetilde{W}}_{s,t}^n\m\big),
\end{align*}
where $\m\widetilde{W}_{s,t}^n := \big\{(\widetilde{W}_t^n)^i - (\widetilde{W}_s^n)^i\big\}_{1\leq i\leq d}$ and $\,\mathbb{\widetilde{W}}_{s,t}^n := \big\{\int_s^t (\widetilde{W}_{s, u}^n)^i\m d(\widetilde{W}_u^n)^j\big\}_{1\leq i,j\leq d}\m$. Then
\begin{align}\label{eq:rough_path_convergence}
d_{\alpha\text{-H\"{o}l;[0,T]}}(\boldsymbol{\widetilde{W}}^n\hspace{-1mm}, \boldsymbol{W})\rightarrow 0,
\end{align}
as $n\rightarrow\infty$ almost surely, where $\alpha\in \big(\frac{1}{3}, \frac{1}{2}\big)$ and $\boldsymbol{W} = \{\boldsymbol{W}_{\hspace{-0.75mm}s,t}\}_{0\leq s\leq t\leq T}$ denotes the ``Stratonovich enhanced'' Brownian motion defined as
\begin{align*}
\boldsymbol{W}_{\hspace{-0.75mm}s,t} := \big(1, W_{s,t}\m, \mathbb{W}_{s,t}\m\big),
\end{align*}
with $\m W_{s,t} := \big\{W_t^i - W_s^i\big\}_{1\leq i\leq d}\in\R^d\m$ and $\m\mathbb{W}_{s,t} := \big\{\int_s^t W_{s, u}^i \circ dW_u^j\big\}_{1\leq i,j\leq d}\in\R^{d\times d}\m$.
\end{theorem}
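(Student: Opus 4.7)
The plan is to combine the pointwise $L^2(\P)$ convergence from Theorems \ref{thm:basic_approximation} and \ref{thm:basic_area_approximation} with uniform higher-order H\"{o}lder bounds on the sequence $\{\boldsymbol{\widetilde{W}}^n\}$, and then invoke a standard interpolation argument from rough path theory to upgrade this into almost sure convergence in the $\alpha$-H\"{o}lder metric for any $\alpha\in(\frac{1}{3}, \frac{1}{2})$. This mirrors the strategy used for deterministic nested partitions in \cite[Section 13.3.2]{frizvictoir2010roughpaths}; the new point here is that the partitions $\D_n$ are random, but the $W$-adapted property supplies a filtration $\{\F_n\}$ compatible enough with the Brownian motion to carry the martingale-based estimates through.

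The first step is to identify both levels of $\boldsymbol{\widetilde{W}}^n$ with $\F_n$-conditional expectations. By (\ref{eq:brownian_approx}) one has $\widetilde{W}^n_{s,t} = \E[W_{s,t}\mid\F_n]$ for every $s \leq t$, and equation (\ref{eq:levy_area_as_martingale}) from the proof of Theorem \ref{thm:basic_area_approximation} gives $\mathbb{\widetilde{W}}^n_{s,t} = \E[\mathbb{W}_{s,t} \mid \F_n]$ when $[s,t]$ lies inside a single subinterval of $\D_n$. A Chen's-relation decomposition across partition points, combined with the $\F_n$-conditional independence of disjoint Brownian increments (which itself uses that $\G_n$ is independent of $W$), extends this identity to arbitrary $s \leq t$. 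Consequently $\{\widetilde{W}^n_{s,t}\}_n$ and $\{\mathbb{\widetilde{W}}^n_{s,t}\}_n$ are $\F_n$-martingales for each fixed pair $(s,t)$, so Doob's maximal inequality applied together with the Brownian moment estimates $\E[\|W_{s,t}\|_2^p] \leq C_p |t-s|^{p/2}$ and $\E[\|\mathbb{W}_{s,t}\|_2^{p/2}] \leq C_p |t-s|^{p/2}$ yields, uniformly in $0\leq s\leq t\leq T$ and $n$,
\begin{align*}
\E\Big[\sup_n \|\widetilde{W}^n_{s,t}\|_2^p\Big] & \leq C_p|t-s|^{p/2},\\
\E\Big[\sup_n \|\mathbb{\widetilde{W}}^n_{s,t}\|_2^{p/2}\Big] & \leq C_p|t-s|^{p/2},
\end{align*}
for every $p > 1$.

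Taking $p$ large enough, a Garsia-Rodemich-Rumsey-type inequality turns these moment bounds into the a.s.~uniform H\"{o}lder estimate $\sup_n \|\boldsymbol{\widetilde{W}}^n\|_{\alpha'\text{-H\"{o}l};[0,T]} < \infty$ for any fixed $\alpha'\in(\alpha,\frac{1}{2})$, and the same bound holds for $\boldsymbol{W}$. A standard interpolation lemma from rough path theory \cite{frizvictoir2010roughpaths} then states that uniform boundedness in $\alpha'$-H\"{o}lder norm together with pointwise a.s.~convergence $\boldsymbol{\widetilde{W}}^n_{s,t} \to \boldsymbol{W}_{s,t}$ (which Theorems \ref{thm:basic_approximation} and \ref{thm:basic_area_approximation} supply) forces convergence in the weaker $\alpha$-H\"{o}lder metric, giving (\ref{eq:rough_path_convergence}). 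The main technical obstacle is the extension of the identity $\mathbb{\widetilde{W}}^n_{s,t} = \E[\mathbb{W}_{s,t} \mid \F_n]$ to intervals $[s,t]$ not aligned with $\D_n$; once this martingale identity is secured, the Doob--GRR--interpolation pipeline is essentially routine. This is precisely where Theorem \ref{thm:subsequence} is crucial: without the $W$-adapted property the conditioning could couple to extra $W$-measurable data and break the conditional-independence argument underlying the Chen-relation decomposition.
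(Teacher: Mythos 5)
Your overall strategy -- identify both levels as $\F_n$-conditional expectations, use Doob to get uniform-in-$n$ H\"{o}lder control, then interpolate with the pointwise a.s.~convergence from Theorems \ref{thm:basic_approximation} and \ref{thm:basic_area_approximation} -- is exactly the skeleton of the paper's proof; where you diverge is in how the uniform H\"{o}lder bound is manufactured. You go via $L^p$ moment estimates $\E\big[\sup_n\|\widetilde{W}^n_{s,t}\|_2^p\big]\leq C_p|t-s|^{p/2}$ (and the analogue at the second level) followed by Garsia--Rodemich--Rumsey; the paper instead imports the a.s.~pathwise bound $\|\boldsymbol{W}_{s,t}\|\leq C_2|t-s|^{\beta}$ with $\|C_2\|_{L^2(\P)}<\infty$ from \cite[Corollary 13.14(i)]{frizvictoir2010roughpaths}, takes $\E[\m\cdot\m|\F_n]$ of that inequality, and applies Doob's $L^2$-maximal inequality to the scalar martingale $\E[C_2\m|\m\F_n]$, sidestepping GRR entirely. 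Both routes are valid; the paper's is shorter because the hard analysis (Fernique/Gaussian integrability of the rough path norm) is already packaged in the cited corollary, whereas your GRR route is more self-contained but needs the two-parameter sup-over-$n$ process to be handled carefully. Two small corrections. First, the identity $\mathbb{\widetilde{W}}^n_{s,t}=\E[\mathbb{W}_{s,t}\m|\m\F_n]$ holds only for $i\neq j$; on the diagonal $\mathbb{\widetilde{W}}^{n,ii}_{s,t}=\tfrac12\big((\widetilde{W}^n_{s,t})^i\big)^2 = \tfrac12\big(\E[W^i_{s,t}|\F_n]\big)^2$, which is a submartingale but not the conditional expectation of $\mathbb{W}^{ii}_{s,t}=\tfrac12(W^i_{s,t})^2$. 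Your moment bound still goes through since Doob's $L^p$-maximal applies to $|\E[W^i_{s,t}|\F_n]|$, but the justification should be phrased accordingly. Second, equation (\ref{eq:levy_area_as_martingale}) is established in the proof of Theorem \ref{thm:basic_area_approximation} for \emph{arbitrary} $0\leq s\leq t\leq T$ via the Riemann-sum and $\F_n$-conditional-independence argument, not only for $[s,t]$ within a single subinterval of $\D_n$, so the Chen-relation extension you sketch is unnecessary.
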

\begin{proof}
Let $\beta \in (\alpha, \frac{1}{2})$ denote a fixed constant. Then by \cite[Corollary 13.14 (i)]{frizvictoir2010roughpaths}, there exists $c > 0$ (depending on $\beta$ and $T$) such that
\begin{align*}
\E\bigg[\exp\Big(c\m\|\boldsymbol{W}\|_{\beta\text{-H\"{o}l;[0,T]}}^2\Big)\bigg] <  \infty.
\end{align*}
In particular, this implies that there exists a non-negative random variable $C_1$ with $\|C_1\|_{L^2(\P)} < \infty$ so that, for almost all $\omega\in\Omega$, the increments of the path $\boldsymbol{W}(\omega)$ satisfy
\begin{align*}
\big\|\boldsymbol{W}_{\hspace{-0.75mm}s,t}\big\|^2 \leq C_1 |t-s|^{2\beta},
\end{align*}
for $0\leq s < t \leq T$ where $\|\cdot\|$ is the norm previously given by (\ref{eq:homogeneous_norm}) in Definition \ref{def:homogeneous_norm}. Hence, there exists a non-negative random variable $C_2$ with $\|C_2\|_{L^2(\P)} < \infty$ and
\begin{align}\label{eq:c2_inequality}
-C_2 |t-s|^{2\beta} \leq \int_s^t W_{s,u}^i \circ dW_{u}^j \leq C_2 |t-s|^{2\beta}.
\end{align}
Since $\E\big[C_2 | \F_n\big]$ is a martingale, we can apply Doob's maximal inequality to give
\begin{align}\label{eq:c2_bound}
\bigg\|\sup_{0\leq m\leq n}\Big(\E\big[C_2 \m|\m \F_m\big]\Big)\bigg\|_{L^2(\P)} \leq 2\big\|\E\big[C_2\m |\m \F_n\big]\big\|_{L^2(\P)} \leq 2\|C_2\|_{L^2(\P)}.
\end{align}
Taking the expectation of the inequality (\ref{eq:c2_inequality}) conditional on the $\sigma$-algebra $\F_n$ and applying equation (\ref{eq:levy_area_as_martingale}) gives
\begin{align}\label{eq:c3_bound}
-C_3 |t-s|^{2\beta} \leq \int_s^t \big(\widetilde{W}_{s,u}^n\big)^i d\big(\widetilde{W}_u^n\big)^j \leq C_3 |t-s|^{2\beta},
\end{align}
where $C_3 := \sup_{n\m\geq\m 0} \big(\E\big[C_2 \m|\m \F_n\big]\big)$ is finite almost surely due to the upper bound (\ref{eq:c2_bound}). Therefore, by setting $C_4 := d^2 C_3$, we have 
\begin{align}
\sup_{n\m\geq\m 0}\bigg\|\int_s^t \big(\widetilde{W}_{s,u}^n\big) \otimes d\big(\widetilde{W}_u^n\big)\bigg\|_2 \leq C_4 |t-s|^{2\beta}.
\end{align}
By a similar martingale argument, the exists a positive random variable $C_5$ such that
\begin{align*}
\sup_{n\m\geq\m 0}\big\|\widetilde{W}_{s,t}^n\big\|_2 \leq C_5 |t-s|^\beta,
\end{align*}
where $C_5$ is finite almost surely. Thus, letting $C_6 := \max\big(\sqrt{C_1}, \sqrt{C_4}\m,C_5\big)$, we have
\begin{align}
\big\|\boldsymbol{W}_{\hspace{-0.75mm}s,t}\big\| & \leq C_6 |t-s|^\beta,\label{eq:w_holder_bound}\\
\big\|\boldsymbol{\widetilde{W}}_{s,t}^n\big\| & = \max\bigg(\big\|\widetilde{W}_{s,t}^n\big\|_2\m, \bigg\|\int_s^t \big(\widetilde{W}_{s,u}^n\big) \otimes d\big(\widetilde{W}_u^n\big)\bigg\|_2^\frac{1}{2}\bigg) \leq C_6 |t-s|^\beta,\label{eq:poly_holder_bound}
\end{align}
for $n\geq 0$. Letting $\mathbb{W}_{s,t}$ and $\mathbb{\widetilde{W}}_{s,t}^n$ denote the second iterated integrals of $W$ and $\widetilde{W}^n$ over the interval $[s,t]$, we note that
\begin{align*}
\frac{\big\|W_{s,t} -  \widetilde{W}_{s,t}^n\big\|_2}{|t-s|^{\alpha}} & \leq \bigg(\frac{\big\|W_{s,t} -  \widetilde{W}_{s,t}^n\big\|_2}{|t-s|^{\beta}}\bigg)^\frac{\alpha}{\beta}\bigg(\sup_{0\leq s\leq t\leq T}\big\|W_{s,t} -  \widetilde{W}_{s,t}^n\big\|_2\bigg)^{1-\frac{\alpha}{\beta}},\\
\frac{\big\|\mathbb{W}_{s,t} -  \mathbb{\widetilde{W}}_{s,t}^n\big\|_2}{|t-s|^{2\alpha}} & \leq \bigg(\frac{\big\|\mathbb{W}_{s,t} -  \mathbb{\widetilde{W}}_{s,t}^n\big\|_2}{|t-s|^{2\beta}}\bigg)^\frac{\alpha}{\beta}\bigg(\sup_{0\leq s\leq t\leq T}\big\|\mathbb{W}_{s,t} -  \mathbb{\widetilde{W}}_{s,t}^n\big\|_2\bigg)^{1-\frac{\alpha}{\beta}}.
\end{align*}
By (\ref{eq:w_holder_bound}) and (\ref{eq:poly_holder_bound}), there exists an almost surely finite random variable $C_7 > 0$, which can bound the first terms on the right hand sides.
\begin{align}
\frac{\big\|W_{s,t} -  \widetilde{W}_{s,t}^n\big\|_2}{|t-s|^{\alpha}} & \leq C_7\bigg(\sup_{0\leq s\leq t\leq T}\big\|W_{s,t} -  \widetilde{W}_{s,t}^n\big\|_2\bigg)^{1-\frac{\alpha}{\beta}},\label{eq:increment_holder_bound}\\
\frac{\big\|\mathbb{W}_{s,t} -  \mathbb{\widetilde{W}}_{s,t}^n\big\|_2}{|t-s|^{2\alpha}} & \leq C_7\bigg(\sup_{0\leq s\leq t\leq T}\big\|\mathbb{W}_{s,t} -  \mathbb{\widetilde{W}}_{s,t}^n\big\|_2\bigg)^{1-\frac{\alpha}{\beta}}.\label{eq:area_holder_bound}
\end{align}
Note that it immediately follows from the uniform H\"{o}lder bounds (\ref{eq:w_holder_bound}) and (\ref{eq:poly_holder_bound}) that, almost surely, any subsequence of $\{(W - \widetilde{W}^n, \mathbb{W} - \mathbb{\widetilde{W}}^n)\}_{n\m\geq\m 0}$ is uniformly bounded and uniformly equicontinuous in $(s,t)$. So by the Arzel\`{a}-Ascoli theorem, there will exist a further subsequence converging uniformly to some random continuous function $E : \{(s,t) : 0\leq s\leq t\leq T\}\rightarrow\R^d\oplus \R^{d\times d}$. Therefore, almost surely, for a fixed $(s,t)$, $E_{s,t}$ is the limit of a subsequence of $\{(W - \widetilde{W}^n, \mathbb{W} - \mathbb{\widetilde{W}}^n)\}$. However, by Theorems \ref{thm:basic_approximation} and \ref{thm:basic_area_approximation}, the only possible limit is zero as $\boldsymbol{\widetilde{W}}{}_{\hspace{-0.75mm}s,t}^n$ converges to $\boldsymbol{W}_{\hspace{-0.75mm}s,t}$ almost surely.\vspace{0.5mm}
Since $E$ is uniformly continuous almost surely, it directly follows that it must be zero.
Hence, almost surely, $\{(W - \widetilde{W}^n, \mathbb{W} - \mathbb{\widetilde{W}}^n)\}$ will converge uniformly to zero as $n\rightarrow\infty$. \medbreak

In particular, this implies that the right hand sides of equations (\ref{eq:increment_holder_bound}) and (\ref{eq:area_holder_bound}) converge to zero almost surely -- which gives $\m d_{\alpha\text{-H\"{o}l;}[0,T]}(\boldsymbol{\widetilde{W}}{}^n, \boldsymbol{W})\rightarrow 0$ as required.
\end{proof}

\subsection{Pathwise convergence of ``polynomial-driven'' CDEs with small local errors}
Since we have shown $\big\{\boldsymbol{\widetilde{W}}^n\big\}$ converges to the Stratonovich Brownian motion $\boldsymbol{W}$ in the $\alpha\m$-H\"{o}lder metric, we can apply an extension of the \textit{Universal Limit Theorem} from rough path theory \cite[Theorem 4.9]{friz2024rsdes} to show that the sequence of CDEs (where the $n$-th CDE is driven by $\widetilde{W}^n$) converges in the $\alpha\m$-H\"{o}lder metric to the rough differential equation driven by $\boldsymbol{W}$ -- which is just the Stratonovich SDE (\ref{eq:strat_SDE}).
To this end, we use the following definition to impose smoothness on the vector fields.\smallbreak

\begin{definition}\label{def:lip_gamma} We say that a function $f:\R^e\rightarrow\R^e$ is $\Lip(\gamma)$ with $\gamma > 1$ if it is bounded with $\lfloor \gamma \rfloor$ bounded derivatives, the last being H\"{o}lder continuous with exponent $(\gamma - \lfloor \gamma \rfloor)$. We say that $f$ is $\Lip(1)$ if it is bounded and Lipschitz continuous.
\end{definition}\smallbreak

\begin{proposition}[Rough path convergence of CDEs driven by polynomials] Suppose the drift $f$ is $\Lip(1)$ and that each noise vector field $g_i$ is $\Lip(\gamma)$ for $\gamma\in(2,3)$. Let $\{\D_n\}$ be a sequence of no-skip dyadic partitions with respect to a Brownian motion $W$ (obtained using an adaptive step size control). For each (random) partition $\D_n\m$, let $\widetilde{W}^n$ denote the associated piecewise degree $q+1$ polynomial approximation that matches the increments and integrals in (\ref{eq:path_enhancement}).\label{prop:poly_CDE}
Let $y$ be the solution of the SDE (\ref{eq:strat_SDE}) and, for each $n\geq 1$, we define $\widetilde{y}^{\m n}$ as the solution of
\begin{align}
    \label{eq:poly_CDE}
    d\m\widetilde{y}_t^{\m n} & = f(\m\widetilde{y}_t^{\m n})\m dt + g(\m\widetilde{y}_t^{\m n})\m d\widetilde{W}_t^{\m n},\\[3pt]
    \widetilde{y}_0^{\m n} & = y_0\m.\nonumber 
\end{align}
Then, letting $\boldsymbol{y}$ and $\boldsymbol{\widetilde{y}}^{\m n}$ denote the unique (Stratonovich) lifts of $y$ and $\widetilde{y}^{\m n}$, we have
\begin{align}
d_{\alpha\text{-H\"{o}l;[0,T]}}(\boldsymbol{\widetilde{y}}^{\m n}\hspace{-1mm}, \boldsymbol{y})\rightarrow 0,
\end{align}
as $n\rightarrow\infty$ almost surely, where $\alpha\in\big(\gamma^{-1}, \frac{1}{2}\big)$.
\end{proposition}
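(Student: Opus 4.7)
The plan is to apply an extension of the Universal Limit Theorem of rough path theory to the converging sequence of driving rough paths $\{\boldsymbol{\widetilde{W}}^n\}$ established in Theorem \ref{thm:rough_path_convergence}. To do this, we first recast each polynomial-driven CDE (\ref{eq:poly_CDE}) as the rough differential equation (RDE) driven by the canonical lift of $\widetilde{W}^n$. Since $\widetilde{W}^n$ is piecewise smooth, its canonical second-level rough path agrees with the Riemann--Stieltjes integral $\mathbb{\widetilde{W}}^n_{s,t} = \int_s^t \widetilde{W}^n_{s,u} \otimes d\widetilde{W}^n_u$ already used to define $\boldsymbol{\widetilde{W}}^n$, so the classical CDE solution $\widetilde{y}^{\m n}$ coincides with the RDE solution $\boldsymbol{\widetilde{y}}^{\m n}$. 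Analogously, the SDE solution $y$ is identified with the RDE solution driven by the Stratonovich-enhanced Brownian motion $\boldsymbol{W}$, which is the standard pathwise interpretation of (\ref{eq:strat_SDE}) in rough path theory.

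Having recast both problems as RDEs, the core of the argument is to invoke a local Lipschitz continuity result for the It\^{o}--Lyons solution map. Since the drift $f$ is only assumed to be $\Lip(1)$ but enters the equation through the bounded-variation component $t$ of $\overline{W}_t = (t, W_t)^\T$, while the noise vector fields $g_i$ are $\Lip(\gamma)$ for $\gamma > 1/\alpha$, we require a ``hybrid'' version of the Universal Limit Theorem that permits a $\Lip(1)$ drift alongside $\Lip(\gamma)$ diffusion coefficients. This is exactly the setting of \cite[Theorem 4.9]{friz2024rsdes}, which asserts that the corresponding solution map is locally Lipschitz from the space of $\alpha$-H\"{o}lder rough paths into itself.

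To apply this result, the main verification is that the sequence of drivers $\{\boldsymbol{\widetilde{W}}^n\}$ is uniformly bounded in the $\alpha$-H\"{o}lder norm almost surely. This is immediate from Theorem \ref{thm:rough_path_convergence}, since convergence in the $\alpha$-H\"{o}lder metric combined with the almost sure finiteness of $\|\boldsymbol{W}\|_{\alpha\text{-H\"{o}l;[0,T]}}$ confines the entire sequence to a bounded set for almost every $\omega$. On this bounded set the solution map is Lipschitz with some (random) constant, and therefore the convergence $d_{\alpha\text{-H\"{o}l;[0,T]}}(\boldsymbol{\widetilde{W}}^n, \boldsymbol{W}) \rightarrow 0$ transfers directly to $d_{\alpha\text{-H\"{o}l;[0,T]}}(\boldsymbol{\widetilde{y}}^{\m n}, \boldsymbol{y}) \rightarrow 0$ almost surely.

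The main obstacle is bookkeeping: checking that the hybrid RDE framework of \cite{friz2024rsdes} is invoked with the correct regularity thresholds, that the condition $\gamma > 1/\alpha$ (which is guaranteed by the assumption $\alpha \in (\gamma^{-1}, 1/2)$) really is enough for local Lipschitz continuity at the $\alpha$-H\"{o}lder level, and that the identification between polynomial-driven CDEs and their associated RDE lifts is legitimate when the drift enters through a bounded-variation channel. Once these verifications are in place, the statement follows by combining Theorem \ref{thm:rough_path_convergence} with a direct application of the hybrid Universal Limit Theorem.
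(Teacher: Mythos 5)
Your proposal is correct and follows essentially the same route as the paper: the paper's one-line proof likewise combines the rough path convergence from Theorem \ref{thm:rough_path_convergence} with the local Lipschitz continuity of the It\^{o}--Lyons solution map for hybrid ``Rough SDEs'' from \cite[Theorem 4.9]{friz2024rsdes}. You simply spell out the intermediate steps (identification of CDE with RDE lifts, the need for a hybrid $\Lip(1)$/$\Lip(\gamma)$ framework, and the uniform $\alpha$-H\"{o}lder bound on the drivers to localize the Lipschitz constant) that the paper leaves implicit.
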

\begin{proof}
The result follows from Theorem \ref{thm:rough_path_convergence} along with the Lipschitz continuity of ``Rough SDEs'' with respect to the driving rough path, see \cite[Theorem 4.9]{friz2024rsdes}.
\end{proof}\smallbreak
\begin{remark}
Technically speaking, we define the rough path $\boldsymbol{y} = (1, y, \mathbbm{y})$ as the limit of $\{\boldsymbol{\widetilde{y}}^{\m n}\}_{n\m\geq\m 0}$ and, using \cite[Theorem 4.9]{friz2024rsdes}, $\boldsymbol{y}$ can be taken to be the solution of the \textit{rough differential equation} driven by Stratonovich enhanced Brownian motion $\boldsymbol{W}$. In particular, from $\boldsymbol{y}$, we can define a process $y=\{y_t\}$ as $y_t = y_0 + y_{0,t}$ for $t\in [0,T]$, which coincides almost surely with the usual solution of the Stratonovich SDE (\ref{eq:strat_SDE}).
\end{remark}

In practice, it is unlikely that each polynomial driven CDE can be solved exactly.
However, if our numerical method approximates the CDE (\ref{eq:poly_CDE}) with $o(h_k)$ accuracy, then we would still expect convergence. For example, Heun's method (\ref{eq:heun}) is clearly obtained by discretizing the ``piecewise linear'' CDE (or Wong-Zakai approximation).\medbreak

To make this precise, we follow the error analysis presented in \cite[Section 4.2]{gaines1997variable}.
However, as rough path theory was formulated after \cite{gaines1997variable}, we can use an extension of \textit{Davie's estimate} \cite[Theorem 10.29]{frizvictoir2010roughpaths} to establish a Lipschitz property of CDE flows.

\begin{proposition}[CDEs driven by Brownian polynomials have Lipschitz flows] Let $\{\widetilde{W}^{\m n}\}$ denote the piecewise polynomial approximations of $W$ corresponding to the sequence of no-skip dyadic partitions $\{\D_n\}$. For each $n\geq 1$, we consider solutions $\widetilde{x}^{\m n}$ and $\widetilde{y}^{\m n}$ of the CDE (\ref{eq:poly_CDE}), driven by the same $\widetilde{W}^{\m n}\hspace{-0.25mm}$, but with different initial values.
Then there exists a positive random variable $L$ with $L < \infty$ almost surely, such that\label{prop:davie_estimate}
\begin{align}\label{eq:flow_estimate}
\|\m\widetilde{x}_t^{\m n} - \widetilde{y}_t^{\m n}\|_2\leq L\m \|\m\widetilde{x}_s^{\m n} - \widetilde{y}_s^{\m n}\|_2\m,
\end{align}
for all $\m 0\leq s\leq t\leq T\m$ and $\m n\geq 0$.
\end{proposition}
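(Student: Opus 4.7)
The plan is to deduce the uniform (in $n$) Lipschitz flow estimate from a standard rough path argument, using the uniform $\beta$-H\"older bound on $\boldsymbol{\widetilde{W}}^n$ that was already established in the proof of Theorem \ref{thm:rough_path_convergence}. Recall that there we produced a positive random variable $C_6$, almost surely finite, such that
\begin{align*}
\big\|\boldsymbol{\widetilde{W}}^n_{s,t}\big\| \leq C_6 |t-s|^\beta \quad \text{for all } 0\leq s\leq t\leq T,\ n\geq 0,
\end{align*}
with $\beta\in(\alpha,\tfrac{1}{2})$ and $\alpha\in(\gamma^{-1},\tfrac{1}{2})$. Since the noise vector fields are $\Lip(\gamma)$ with $\gamma\in(2,3)$ and the drift is $\Lip(1)$ (hence can be absorbed by appending the time coordinate and working with the enlarged vector field $F$ as in (\ref{eq:new_notation1})), each polynomial-driven equation fits into the rough differential equation framework with a driver whose $\beta$-H\"older rough path norm is controlled by $C_6$ uniformly in $n$.

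The main tool will be Davie's local Lipschitz estimate for the flow of a rough differential equation (e.g.\ \cite[Theorem 10.29]{frizvictoir2010roughpaths}): there exist constants $c_0>0$ and $K>0$, depending only on the vector fields and $\beta$, such that whenever $\|\boldsymbol{\widetilde{W}}^n\|_{\beta\text{-H\"ol;}[s,t]} \leq c_0$ one has
\begin{align*}
\big\|\widetilde{x}^{\m n}_t - \widetilde{y}^{\m n}_t\big\|_2 \leq K \big\|\widetilde{x}^{\m n}_s - \widetilde{y}^{\m n}_s\big\|_2.
\end{align*}
First I would choose a (random) mesh size $h_* := (c_0/C_6)^{1/\beta}$, which is strictly positive and finite almost surely. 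By the uniform H\"older bound above, on every subinterval of $[0,T]$ of length at most $h_*$, the driver $\boldsymbol{\widetilde{W}}^n$ has $\beta$-H\"older norm at most $c_0$, regardless of $n$.

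Next, I partition $[0,T]$ into $N := \lceil T/h_* \rceil$ consecutive subintervals $0 = s_0 < s_1 < \cdots < s_N = T$, each of length $\leq h_*$. Applying Davie's local estimate on each $[s_j,s_{j+1}]$ and composing the resulting bounds telescopically yields
\begin{align*}
\big\|\widetilde{x}^{\m n}_t - \widetilde{y}^{\m n}_t\big\|_2 \leq K^{N} \big\|\widetilde{x}^{\m n}_s - \widetilde{y}^{\m n}_s\big\|_2
\end{align*}
for all $0\leq s\leq t\leq T$ and all $n\geq 0$. Setting $L := K^{N}$ gives a single almost surely finite random constant (depending on $C_6$, hence only on $W$ and not on $n$) which proves (\ref{eq:flow_estimate}).

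The main subtlety is ensuring $N$, and therefore $L$, is truly independent of $n$. This is guaranteed because the $\beta$-H\"older bound with constant $C_6$ holds simultaneously for all $n$; the only randomness enters through $C_6$, and $C_6<\infty$ almost surely was already shown via Doob's maximal inequality in the proof of Theorem \ref{thm:rough_path_convergence}. A secondary technical point is that Davie's theorem is typically stated for genuine rough paths rather than smooth (polynomial) drivers; however, since each $\widetilde{W}^n$ is of bounded variation, its canonical rough path lift $\boldsymbol{\widetilde{W}}^n$ is a geometric rough path, and the CDE solution $\widetilde{y}^{\m n}$ coincides with the RDE solution driven by $\boldsymbol{\widetilde{W}}^n$, so the estimate applies verbatim.
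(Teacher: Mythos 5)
Your overall strategy is the same as the paper's: establish a uniform-in-$n$ H\"older bound on the lifted drivers $\boldsymbol{\widetilde{W}}^n$ (as already done in the proof of Theorem \ref{thm:rough_path_convergence}) and then invoke a Davie-type Lipschitz estimate for the flow. The paper short-circuits the work by citing a \emph{global} version of the estimate \cite[Theorem 4.9]{friz2024rsdes}, which directly yields a Lipschitz constant depending only on the H\"older norm over $[0,T]$ and the vector field norms; you instead try to reconstruct this from a \emph{local} estimate via a patching argument. That is a legitimate alternative route, but as written there is a technical flaw in the localization step.

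Specifically, you assert that Davie's local estimate applies ``whenever $\|\boldsymbol{\widetilde{W}}^n\|_{\beta\text{-H\"ol};[s,t]}\leq c_0$'', and then claim that choosing $h_*:=(c_0/C_6)^{1/\beta}$ ensures this on every subinterval of length at most $h_*$. This step fails: the H\"older norm $\|\boldsymbol{\widetilde{W}}^n\|_{\beta\text{-H\"ol};[s,t]}=\sup_{s\leq u<v\leq t}\|\boldsymbol{\widetilde{W}}^n_{u,v}\|/|v-u|^\beta$ is a ratio and does \emph{not} shrink as $[s,t]$ shrinks; the bound $\|\boldsymbol{\widetilde{W}}^n_{u,v}\|\leq C_6|v-u|^\beta$ only yields $\|\boldsymbol{\widetilde{W}}^n\|_{\beta\text{-H\"ol};[s,t]}\leq C_6$ regardless of the length of $[s,t]$, and if $C_6>c_0$ the hypothesis is never met. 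The correct smallness condition for a local Davie estimate should be stated in terms of a quantity that genuinely scales with the interval, such as $\sup_{s\leq u<v\leq t}\|\boldsymbol{\widetilde{W}}^n_{u,v}\|\leq c_0$ or equivalently $\|\boldsymbol{\widetilde{W}}^n\|_{\beta\text{-H\"ol};[s,t]}\cdot|t-s|^\beta\leq c_0$. With that reformulation your choice of $h_*$ does indeed give $C_6 h_*^\beta=c_0$, and the rest of the patching argument (chaining the local estimate over $N=\lceil T/h_*\rceil$ subintervals to get $L=K^N$, with $N$ depending only on the $n$-independent random constant $C_6$) is sound. So the gap is a misstatement of the hypothesis of Davie's local estimate rather than a flaw in the overall scheme; once corrected, your approach is a more elementary, self-contained version of the paper's one-line invocation of the global estimate.
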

\begin{proof}
We first note that $\{\|\widetilde{W}^{\m n}\|_{\alpha\text{-H\"{o}l;}[0,T]}\}_{n\geq 0}$ can be uniformly bounded by an almost surely finite positive random variable (as shown in the proof of Theorem \ref{thm:rough_path_convergence}).
Therefore, as $f$ and $g$ are both Lipschitz, we can apply an extension of \textit{Davie's estimate} (given by \cite[Theorem 4.9]{friz2024rsdes}) to the two CDE solutions, $\widetilde{x}^{\m n}$ and $\widetilde{y}^{\m n}$, since the condition
\begin{align*}
\|\widetilde{W}^{\m n}\|_{\alpha\text{-H\"{o}l;}[0,T]} + \|f\|_{\Lip(1)} + \|g\|_{\Lip(1)} \leq M,
\end{align*}
holds for some almost surely finite random variable $M$. The result directly follows by applying \cite[Theorem 4.9]{friz2024rsdes} to the CDE solutions $\widetilde{x}^{\m n}$ and $\widetilde{y}^{\m n}$, with $\|\cdot\|_m = \|\cdot\|_2$.
\end{proof}

Using the Lipschitz estimate (\ref{eq:flow_estimate}), we now adapt the proof of \cite[Theorem 4.3]{gaines1997variable} to give our main result on pathwise convergence for adaptive approximations close to polynomial driven CDEs -- such as Heun's method (\ref{eq:heun}) and the SRK method (\ref{eq:srk}).\smallbreak

\begin{theorem}[Pathwise convergence of SDE solvers with adaptive step sizes] For each partition $\D_n\m$, appearing in Propositions \ref{prop:poly_CDE} and \ref{prop:davie_estimate}, let $Y^n = \{Y_k^n\}$ be a numerical solution of (\ref{eq:strat_SDE}) computed at times $\{0 = t_0^n < t_1^n < \cdots < t_{K_n}^n = T\} = \D_n\m$, with the initial condition $Y_0^n := y_0$ such that, for all $k\in\{0, 1,\cdots, K_n - 1\}$, we have\label{thm:pathwise_conv_thm}
\begin{align}\label{eq:near_cde}
\big\|\m Y_{k+1}^n  - \Phi_{t_k^n\m,t_{k+1}^n}^n(Y_k^n)\big\|_2 \leq w(t_k^n, t_{k+1}^n),
\end{align}
where $\Phi_{a\m,b}^n\hspace{-0.5mm} : \R^e\rightarrow\R^e$ denotes the solution $\widetilde{x}_b^{\m n}$ at time $b$ of the following CDE on $[a,b]$,
\begin{align}
    \label{eq:poly_CDE_map}
    d\widetilde{x}_t^{\m n} & = f(\widetilde{x}_t^{\m n})\m dt + g(\widetilde{x}_t^{\m n})\m d\widetilde{W}_t^{\m n},\\[3pt]
    \widetilde{x}_a^{\m n} & := y,\nonumber 
\end{align}
driven by the same piecewise polynomial $\{\widetilde{W}_t^{\m n}\}_{t\in[0,T]}$ considered in Proposition \ref{prop:poly_CDE} and
\begin{align}\label{eq:small_errors}
\sum_{k = 0}^{K_n - 1} w(t_k^n, t_{k+1}^n) \rightarrow 0,
\end{align}
as $n\rightarrow\infty$ almost surely. Then the numerical solutions $\{Y^n\}$ converges pathwise, i.e.
\begin{align}\label{eq:pathwise_conv}
\sup_{0\m\leq\m k\m\leq\m K_n}\|Y_k^n - y_{t_k^n}\|_2\rightarrow 0,
\end{align}
as $n\rightarrow\infty$ almost surely.
\end{theorem}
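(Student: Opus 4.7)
The plan is to split the pathwise error via the triangle inequality and handle each piece separately. For each $k$,
\begin{align*}
\|Y_k^n - y_{t_k^n}\|_2 \leq \|Y_k^n - \widetilde{y}^{\,n}_{t_k^n}\|_2 + \|\widetilde{y}^{\,n}_{t_k^n} - y_{t_k^n}\|_2.
\end{align*}
The second term is uniformly controlled by Proposition \ref{prop:poly_CDE}: $\alpha$-H\"older convergence of $\boldsymbol{\widetilde{y}}^{\,n}$ to $\boldsymbol{y}$ implies uniform convergence of the underlying paths, so $\sup_{t\in[0,T]}\|\widetilde{y}^{\,n}_t - y_t\|_2 \to 0$ almost surely. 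The real work is in bounding the first term uniformly in $k$.

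For this, I would employ a telescoping (``Lady Windermere's fan'') argument. Writing $\varphi^n_{s,t} := \Phi^n_{s,t}$ for the CDE flow driven by $\widetilde{W}^{\,n}$, the hypothesis (\ref{eq:near_cde}) says $Y_{k+1}^n = \varphi^n_{t_k^n,\m t_{k+1}^n}(Y_k^n) + r_k^n$ with $\|r_k^n\|_2 \leq w(t_k^n, t_{k+1}^n)$. For $j\in\{0,\dots,k\}$, let $\widetilde{y}^{\,(j),n}$ denote the CDE solution on $[t_j^n, T]$ driven by the same $\widetilde{W}^{\,n}$ and started from $Y_j^n$ at time $t_j^n$. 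Then $\widetilde{y}^{\,(0),n}_{t_k^n} = \widetilde{y}^{\,n}_{t_k^n}$ and $\widetilde{y}^{\,(k),n}_{t_k^n} = Y_k^n$, so
\begin{align*}
Y_k^n - \widetilde{y}^{\,n}_{t_k^n} = \sum_{j=0}^{k-1}\big(\widetilde{y}^{\,(j+1),n}_{t_k^n} - \widetilde{y}^{\,(j),n}_{t_k^n}\big),
\end{align*}
and at time $t_{j+1}^n$ the two flows $\widetilde{y}^{\,(j+1),n}$ and $\widetilde{y}^{\,(j),n}$ differ by exactly $r_j^n$.

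Now I would apply Proposition \ref{prop:davie_estimate} to each summand with $s = t_{j+1}^n$ and $t = t_k^n$: since the random Lipschitz constant $L$ there is uniform over all $0\leq s\leq t\leq T$ and $n$, we get
\begin{align*}
\big\|\widetilde{y}^{\,(j+1),n}_{t_k^n} - \widetilde{y}^{\,(j),n}_{t_k^n}\big\|_2 \leq L\|r_j^n\|_2 \leq L\,w(t_j^n, t_{j+1}^n).
\end{align*}
Summing and taking the supremum over $k$ gives
\begin{align*}
\sup_{0\leq k\leq K_n}\|Y_k^n - \widetilde{y}^{\,n}_{t_k^n}\|_2 \;\leq\; L\sum_{j=0}^{K_n-1} w(t_j^n, t_{j+1}^n),
\end{align*}
which tends to zero almost surely by hypothesis (\ref{eq:small_errors}).

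The subtle point, and the reason the argument works at all, is that $L$ is uniform in $s,t,n$. A naive one-step Gr\"onwall bound $e_{k+1}\leq L\,e_k + w_k$ using the same $L$ would give $e_k\lesssim L^{K_n}\sum_j w_j$, which blows up since $K_n\to\infty$. The global flow estimate from rough path theory applies $L$ only once per introduced local error, so the total accumulated error is only a fixed random multiple of $\sum_j w(t_j^n, t_{j+1}^n)$. I expect no further obstacles: all of the analytic substance (rough path convergence of the polynomial lifts and uniform Lipschitz continuity of the CDE flow) has already been packaged into Propositions \ref{prop:poly_CDE} and \ref{prop:davie_estimate}; Theorem \ref{thm:pathwise_conv_thm} is the deterministic error-propagation consequence.
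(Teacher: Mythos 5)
Your proof is correct and follows essentially the same route as the paper's: a telescoping (``Lady Windermere's fan'') sum over the local errors, controlled one at a time by the uniform Lipschitz flow estimate of Proposition \ref{prop:davie_estimate}, followed by the $\alpha$-H\"older convergence of the exact CDE solution to $y$. Your $\widetilde{y}^{\,(j),n}_{t_k^n}$ is precisely the paper's $\Phi^n_{t_j^n,\m t_k^n}(Y_j^n)$, and the paper's use of the flow property $\Phi^n_{s,t}=\Phi^n_{u,t}\circ\Phi^n_{s,u}$ is exactly your observation that consecutive fan branches differ by $r_j^n$ at time $t_{j+1}^n$; the explicit triangle-inequality split into $\|Y_k^n - \widetilde{y}^{\,n}_{t_k^n}\|_2 + \|\widetilde{y}^{\,n}_{t_k^n} - y_{t_k^n}\|_2$ and the attribution of the second term to Proposition \ref{prop:poly_CDE} (rather than the paper's slightly imprecise citation of Theorem \ref{thm:rough_path_convergence}) are, if anything, a small improvement in clarity.
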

\begin{proof}
Since $\Phi_{t_k^n, t_k^n}^n (Y_k^n) = Y_k^n$ and $(t_0^n, Y_0^n) = (0, y_0)$ for all $n\geq 0$, by expressing $Y_k^n - \Phi_{0, t_k^n}^n (y_0)\m$ as a telescoping sum and applying the triangle inequality, we obtain
\begin{align*}
\big\|\m Y_k^n - \Phi_{0, t_k^n}^n (y_0)\big\|_2 & \leq \sum_{i=0}^{k-1} \big\|\Phi_{t_{i+1}^n, t_k^n}^n (Y_{i+1}^n) - \Phi_{t_i^n, t_k^n}^n (Y_i^n)\big\|_2\m.
\end{align*}
As $\Phi^n$ denotes the solution of a differential equation, it has a natural flow property, $\Phi_{s,t}^n = \Phi_{u,t}^n\circ \Phi_{s,u}^n$ for $u\in[s,t]$. In particular, by applying this to $\Phi_{t_i^n, t_k^n}^n (Y_i^n)$, we have
\begin{align*}
\big\|\m Y_k^n - \Phi_{0, t_k^n}^n (y_0)\big\|_2 \leq \sum_{i=0}^{k-1} \big\|\Phi_{t_{i+1}^n, t_k^n}^n (Y_{i+1}^n) - \Phi_{t_{i+1}^n, t_k^n}^n\big(\Phi_{t_i^n, t_{i+1}^n}^n(Y_i^n)\big)\big\|_2\m.
\end{align*}
Thus, applying the Lipschitz estimate (\ref{eq:flow_estimate}) from Proposition \ref{prop:davie_estimate} to each $\Phi_{t_{i+1}^n, t_k^n}^n\m$,
\begin{align*}
\hspace{20mm}\big\|\m Y_k^n - \Phi_{0, t_k^n}^n (y_0)\big\|_2 & \leq \sum_{i=0}^{k-1} L\m\big\|Y_{i+1}^n - \Phi_{t_i^n, t_{i+1}^n}^n(Y_i^n)\big\|_2 \\
&\leq L\sum_{i=0}^{k-1}w(t_i^n, t_{i+1}^n),\hspace{20mm}[\m\text{by }(\ref{eq:near_cde})].
\end{align*}
By the assumption (\ref{eq:small_errors}) that the ``error bounds'' $\big\{w(t_i^n, t_{i+1}^n)\big\}$ are small, we see that
\begin{align}
\sup_{0\leq k\leq K_n}\big\|\m Y_k^n - \Phi_{0, t_k^n}^n (y_0)\big\|_2 \leq L\sum_{i=0}^{K_n-1}w(t_i^n, t_{i+1}^n)\rightarrow 0,
\end{align}
as $n\rightarrow\infty$ almost surely. The result now immediately follows by Theorem \ref{thm:rough_path_convergence}, which established the rough path convergence of $\{\Phi_{0, t}^n (y_0)\}$ to the Stratonovich solution.
\end{proof}\smallbreak
\begin{remark}
If the local errors can be bounded by $w(t_k^n, t_{k+1}^n) = \widetilde{w}(t_{k+1}^n - t_k^n)$ for some function $\widetilde{w}$ with $\widetilde{w}(h)\sim o(h)$, then $\sum_{k = 0}^{K_n - 1} w(t_k^n, t_{k+1}^n) \rightarrow 0$ is trivially satisfied.
This will be the case for standard numerical methods, such as Heun's method (\ref{eq:heun}), where under suitable regularity conditions, we have $\widetilde{w}(h)\sim O(h^{\frac{3}{2}})$ in an $L^2(\P)$ sense.
\end{remark}\smallbreak

\subsection{Numerical methods for SDEs viewed as approximations of CDEs}\label{sect:methods_as_cdes}

To conclude the section, we will argue why the convergence criterion (\ref{eq:near_cde}) is satisfied the no$\m$-area Milstein, Heun and stochastic Runge$\m$-Kutta methods given in Section \ref{sect:examples_of_methods}.
By considering Taylor expansions, we can show these numerical methods accurately approximate the CDE (\ref{eq:poly_CDE_map}) so that our main result (Theorem \ref{thm:pathwise_conv_thm}) can be applied.

\begin{theorem}[Taylor expansion of CDEs driven by ``Brownian'' polynomials] Consider the controlled differential equation considered in Theorem \ref{thm:pathwise_conv_thm} and given by\label{thm:cde_expansion}
\begin{align}\label{eq:approx_cde}
d\widetilde{x}_t^{\m n} = f(\widetilde{x}_t^{\m n})\m dt + g(\widetilde{x}_t^{\m n})\m d\widetilde{W}_t^{\m n},
\end{align}
where $f:\R^e\rightarrow\R^e$ and $g:\R^e\rightarrow\R^{e\times d}$ are once and twice continuously differentiable respectively. Suppose further that $f$ and $g$ are bounded and with bounded derivatives.
Then the solution $\widetilde{x}^{\m n}$ of the controlled differential equation (\ref{eq:approx_cde}) can be expanded as
\begin{align}\label{eq:cde_expansion}
\mm\widetilde{x}_{t_{k+1}}^{\m n} = \widetilde{x}_{t_k}^{\m n} + f(\widetilde{x}_{t_k}^{\m n})h_k + g(\widetilde{x}_{t_k}^{\m n})W_k + g^{\m\prime}(\widetilde{x}_{t_k}^{\m n})\m g(\widetilde{x}_{t_k}^{\m n})\int_{t_k}^{t_{k+1}}\hspace*{-1mm}\int_{t_k}^t d\widetilde{W}_s^{\m n}\otimes d\widetilde{W}_t^{\m n} + R,
\end{align}
where, for any fixed $\alpha\in\big(0, \frac{1}{2}\big)$, there is an almost surely finite random variable $C > 0$, not depending on $h_k$ or $\widetilde{W}^n$, such that the remainder term $R$ satisfies $\|R\|_2 \leq C h_k^{3\alpha}$.
\end{theorem}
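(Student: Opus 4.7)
The plan is to expand both integrals in the mild form of the CDE as Taylor series around the left endpoint $\widetilde{x}_{t_k}^{\m n}$, and then bound every remainder using the uniform $\alpha$-H\"{o}lder estimates on $\widetilde{W}^n$ that were produced in the proof of Theorem~\ref{thm:rough_path_convergence}. Because $\widetilde{W}^n$ is piecewise polynomial on $[t_k, t_{k+1}]$, every integral below is a classical Riemann-Stieltjes integral -- no rough-path machinery is needed to define the expansion, only to bound the errors uniformly in $n$ and $k$.

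Starting from $\widetilde{x}_{t_{k+1}}^{\m n} - \widetilde{x}_{t_k}^{\m n} = \int_{t_k}^{t_{k+1}} f(\widetilde{x}_t^{\m n})\m dt + \int_{t_k}^{t_{k+1}} g(\widetilde{x}_t^{\m n})\m d\widetilde{W}_t^{\m n}$, the uniform H\"{o}lder bound (\ref{eq:poly_holder_bound}) and boundedness of $f$ and $g$ imply that $\widetilde{x}^n$ is itself $\alpha$-H\"{o}lder with a random constant independent of $n$. Therefore $\|f(\widetilde{x}_t^n) - f(\widetilde{x}_{t_k}^n)\|_2 \leq C(t-t_k)^\alpha$ and the drift integral satisfies
\begin{align*}
\int_{t_k}^{t_{k+1}} f(\widetilde{x}_t^{\m n})\m dt = f(\widetilde{x}_{t_k}^{\m n}) h_k + R_f, \qquad \|R_f\|_2 \leq C h_k^{1+\alpha},
\end{align*}
which is absorbed into $O(h_k^{3\alpha})$ since $\alpha < \tfrac{1}{2}$.

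For the diffusion integral I would Taylor-expand $g$ with a quadratic remainder, $g(\widetilde{x}_t^n) = g(\widetilde{x}_{t_k}^n) + g^{\m\prime}(\widetilde{x}_{t_k}^n)(\widetilde{x}_t^n - \widetilde{x}_{t_k}^n) + \rho(t)$ with $\|\rho(t)\|_2 \leq C h_k^{2\alpha}$, and a first use of the CDE itself gives $\widetilde{x}_t^n - \widetilde{x}_{t_k}^n = g(\widetilde{x}_{t_k}^n)\widetilde{W}_{t_k,t}^n + r(t)$ with $\|r(t)\|_2 \leq C h_k^{2\alpha}$ (the leading errors being the drift contribution $f(\widetilde{x}_{t_k}^n)(t-t_k)$ of order $h_k$ and an iterated-integral term of order $h_k^{2\alpha}$). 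Substituting, and using $\widetilde{W}_{t_k,t_{k+1}}^n = W_k$ -- which holds because the enhancement function includes the Brownian increment, forcing the polynomial to match $W$ at every partition endpoint -- yields
\begin{align*}
\int_{t_k}^{t_{k+1}} g(\widetilde{x}_t^{\m n})\m d\widetilde{W}_t^{\m n} = g(\widetilde{x}_{t_k}^{\m n}) W_k + g^{\m\prime}(\widetilde{x}_{t_k}^{\m n}) g(\widetilde{x}_{t_k}^{\m n}) \int_{t_k}^{t_{k+1}} \widetilde{W}_{t_k,t}^{\m n} \otimes d\widetilde{W}_t^{\m n} + R_g,
\end{align*}
where $R_g$ is the sum of $\int_{t_k}^{t_{k+1}}\rho(t)\m d\widetilde{W}_t^n$, $\int_{t_k}^{t_{k+1}} g^{\m\prime}(\widetilde{x}_{t_k}^n)\m r(t)\m d\widetilde{W}_t^n$, and a change-of-basepoint correction inside the double integral. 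Each piece is controlled by a sup-norm of order $h_k^{2\alpha}$ multiplied either by the total variation of $\widetilde{W}^n$ on $[t_k,t_{k+1}]$ (which is $O(h_k^\alpha)$, by combining the $\alpha$-H\"{o}lder bound for $\widetilde{W}^n$ with Markov's inequality for polynomials of fixed degree) or by the iterated-integral bound $\|\int_s^t \widetilde{W}_{s,u}^n \otimes d\widetilde{W}_u^n\|_2 \leq C|t-s|^{2\alpha}$ coming from (\ref{eq:poly_holder_bound}), giving $\|R_g\|_2 \leq C h_k^{3\alpha}$.

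The main obstacle I anticipate is careful bookkeeping of these error pieces so that all random prefactors are absorbed into a single almost surely finite constant $C$ independent of $n$ and $k$. A subtler point is the mixed contribution $\int_{t_k}^{t_{k+1}}\int_{t_k}^t f(\widetilde{x}_{t_k}^n)\m ds\, d\widetilde{W}_t^n$ that arises from $r(t)$, which yields $O(h_k \cdot h_k^\alpha) = O(h_k^{1+\alpha})$ and is again absorbed into $O(h_k^{3\alpha})$ for $\alpha < \tfrac{1}{2}$. Crucially, the almost-sure finiteness of $C$ rests on the Doob maximal inequality argument already performed in the proof of Theorem~\ref{thm:rough_path_convergence}, which produces a single random upper bound on $\sup_n \|\widetilde{W}^n\|_{\alpha\text{-H\"{o}l;[0,T]}}$ and hence, via the Lipschitz-flow estimate of Proposition~\ref{prop:davie_estimate}, on $\sup_n \|\widetilde{x}^n\|_{\alpha\text{-H\"{o}l;[0,T]}}$.
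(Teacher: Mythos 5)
Your proposal follows essentially the same route as the paper: write the CDE in integral form, Taylor-expand the vector fields around $\widetilde{x}_{t_k}^{\m n}$, and bound the resulting remainder pieces using the uniform $\alpha$-H\"{o}lder estimates on $\{\widetilde{W}^n\}$ coming from the Doob-maximal-inequality argument in the proof of Theorem~\ref{thm:rough_path_convergence}, exploiting the fact that $\widetilde{W}^n$ is a polynomial of fixed degree on each subinterval to convert H\"{o}lder control on increments into control on the derivative (you use Markov's inequality for polynomials, the paper instead expresses the polynomial coefficients as rescaled iterated integrals and bounds those directly; both give $(\widetilde{W}^n)^\prime \sim O(h_k^{\alpha-1})$, equivalently total variation $O(h_k^\alpha)$). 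The bookkeeping is organized slightly differently -- you expand $g$ with an explicit quadratic Taylor remainder and substitute a first-order approximation of $\widetilde{x}_t^n - \widetilde{x}_{t_k}^n$, whereas the paper applies the chain rule once to both $f$ and $g$ and writes down a four-term remainder explicitly -- but these are the same estimates arranged differently, and your term-by-term bounds ($\rho(t)\sim O(h_k^{2\alpha})$, $r(t)\sim O(h_k^{2\alpha})$, total variation $O(h_k^\alpha)$) combine to $O(h_k^{3\alpha})$ exactly as required.

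One small inaccuracy at the end: you attribute the uniform bound on $\sup_n\|\widetilde{x}^n\|_{\alpha\text{-H\"{o}l}}$ to Proposition~\ref{prop:davie_estimate}, but that proposition is a Lipschitz estimate for the flow with respect to initial conditions, not a regularity statement about a single solution. Fortunately this reference is superfluous: the local H\"{o}lder estimate you actually need within each $[t_k, t_{k+1}]$ already follows, as you note earlier in the same paragraph, from boundedness of $f$ and $g$ together with the uniform total-variation bound on the polynomial pieces of $\widetilde{W}^n$, and that is all the argument requires.
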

\begin{proof}
The solution of the CDE (\ref{eq:approx_cde}) can be expressed in integral form as
\begin{align*}
\widetilde{x}_{t_{k+1}}^{\m n} = \widetilde{x}_{t_k}^{\m n} + \int_{t_k}^{t_{k+1}} \hspace{-1mm} f(\widetilde{x}_t^{\m n})\m dt + \int_{t_k}^{t_{k+1}} \hspace{-1mm} g(\widetilde{x}_t^{\m n})\m d\widetilde{W}_t^{\m n},
\end{align*}
and, given a differentiable function $F\in\mathcal{C}^1(\R^e)$, we also have the following chain rule:
\begin{align*}
    F(\widetilde{x}_t^{\m n}) = F(\widetilde{x}_{t_k}^{\m n}) + \int_{t_k}^t F^{\m\prime}(\widetilde{x}_s^{\m n})\,  d\widetilde{x}_s^{\m n}.
\end{align*}
Therefore, using the above equations, we can derive a Taylor expansion for the CDE.
\begin{align*}
\widetilde{x}_{t_{k+1}}^{\m n} & = \widetilde{x}_{t_k}^{\m n} +  \int_{t_k}^{t_{k+1}} \hspace{-1mm}  f(\widetilde{x}_{t_k}^{\m n}) + \int_{t_k}^t f^{\m\prime}(\widetilde{x}_s^{\m n})\m d\widetilde{x}_s^{\m n}\m dt + \int_{t_k}^{t_{k+1}} \hspace{-1mm} g(\widetilde{x}_{t_k}^{\m n}) + \int_{t_k}^t g^{\m\prime}(\widetilde{x}_s^{\m n})\m d\widetilde{x}_s^{\m n}\m d\widetilde{W}_t^{\m n}\\[3pt]
& = \widetilde{x}_{t_k}^{\m n} + f(\widetilde{x}_{t_k}^{\m n})h_k + g(\widetilde{x}_{t_k}^{\m n})\big(\widetilde{W}_{t_{k+1}}^{\m n} - \widetilde{W}_{t_k}^{\m n}\big)\\[2pt]
&\mm + \int_{t_k}^{t_{k+1}}\hspace*{-1mm}\int_{t_k}^t f^{\m\prime}(\widetilde{x}_s^{\m n})f(\widetilde{x}_s^{\m n})\m\m ds\m\m dt + \int_{t_k}^{t_{k+1}}\hspace*{-1mm}\int_{t_k}^t f^{\m\prime}(\widetilde{x}_s^{\m n})g(\widetilde{x}_s^{\m n})\m\m d\widetilde{W}_s^{\m n}\m dt\\
&\mm + \int_{t_k}^{t_{k+1}}\hspace*{-1mm}\int_{t_k}^t f^{\m\prime}(\widetilde{x}_s^{\m n})g(\widetilde{x}_s^{\m n})\m\m ds\m\m d\widetilde{W}_t^{\m n} + \int_{t_k}^{t_{k+1}}\hspace*{-1mm}\int_{t_k}^t g^{\m\prime}(\widetilde{x}_s^{\m n})g(\widetilde{x}_s^{\m n})\m\m d\widetilde{W}_s^{\m n}\m d\widetilde{W}_t^{\m n}\\
& = \widetilde{x}_{t_k}^{\m n} + f(\widetilde{x}_{t_k}^{\m n})h_k + g(\widetilde{x}_{t_k}^{\m n})W_k + g^{\m\prime}(\widetilde{x}_{t_k}^{\m n})\m g(\widetilde{x}_{t_k}^{\m n})\int_{t_k}^{t_{k+1}}\hspace*{-1mm}\int_{t_k}^t d\widetilde{W}_s^{\m n}\otimes d\widetilde{W}_t^{\m n} + R,
\end{align*}
where the remainder term is $R := \int_{t_k}^{t_{k+1}}\hspace*{-1mm}\int_{t_k}^t \big(g^{\m\prime}(\widetilde{x}_s^{\m n})g(\widetilde{x}_s^{\m n}) - g^{\m\prime}(\widetilde{x}_{t_k})g(\widetilde{x}_{t_k}^{\m n})\big)\m d\widetilde{W}_s^{\m n}\m d\widetilde{W}_t^{\m n}+$ $\int_{t_k}^{t_{k+1}}\hspace*{-1mm}\int_{t_k}^t \hspace{-0.25mm}f^{\m\prime}(\widetilde{x}_s^{\m n})g(\widetilde{x}_s^{\m n})\m\m d\widetilde{W}_s^{\m n}\m dt\m +\m \int_{t_k}^{t_{k+1}}\hspace*{-1mm}\int_{t_k}^t\hspace{-0.25mm} f^{\m\prime}(\widetilde{x}_s^{\m n})g(\widetilde{x}_s^{\m n})\m\m ds\m\m d\widetilde{W}_t^{\m n}\m +\m\int_{t_k}^{t_{k+1}}\hspace*{-1mm}\int_{t_k}^t\hspace{-0.25mm} f^{\m\prime}(\widetilde{x}_s^{\m n})f(\widetilde{x}_s^{\m n})\m\m ds\m\m dt$.\medbreak

As $\widetilde{W}^{\m n}$ is a polynomial on $[t_k\m,t_{k+1}]$, we can apply the chain rule to rewrite any ``$d\widetilde{W}_t^{\m n}\m$'' integral as an ``$dt\m$'' integral. This allows the first integral in $R$ to be estimated,
\begin{align*}
&\bigg\|\int_{t_k}^{t_{k+1}}\hspace*{-1mm}\int_{t_k}^t \big(g^{\m\prime}(\widetilde{x}_s^{\m n})g(\widetilde{x}_s^{\m n}) - g^{\m\prime}(\widetilde{x}_{t_k})g(\widetilde{x}_{t_k}^{\m n})\big)\m d\widetilde{W}_s^{\m n}\m d\widetilde{W}_t^{\m n}\bigg\|_2\\
&\mm\leq\int_{t_k}^{t_{k+1}}\hspace*{-1mm}\int_{t_k}^t \big\|\big(g^{\m\prime}(\widetilde{x}_s^{\m n})g(\widetilde{x}_s^{\m n}) - g^{\m\prime}(\widetilde{x}_{t_k})g(\widetilde{x}_{t_k}^{\m n})\big)\m \big(\widetilde{W}_s^{\m n}\big)^\prime \big(\widetilde{W}_t^{\m n}\big)^\prime \big\|_2\m ds\m dt\\
&\mm\leq\int_{t_k}^{t_{k+1}}\hspace*{-1mm}\int_{t_k}^t \big\|g^{\m\prime}(\widetilde{x}_s^{\m n})g(\widetilde{x}_s^{\m n}) - g^{\m\prime}(\widetilde{x}_{t_k})g(\widetilde{x}_{t_k}^{\m n})\big\|\big\|\big(\widetilde{W}_s^{\m n}\big)^\prime\m\big\|_2\big\| \big(\widetilde{W}_t^{\m n}\big)^\prime\m\big\|_2\m ds\m dt\\
&\mm\leq \frac{1}{2}h_k^2  \big\|\m g^{\m \prime}(\cdot)g(\cdot)\big\|_{\Lip(1)}\sup_{t\in[t_k\m, t_{k+1}]}\big\|\m\widetilde{x}_t^{\m n} - \widetilde{x}_{t_k}^{\m n}\big\|_2 \sup_{t\in[t_k\m, t_{k+1}]}\big\|\big(\widetilde{W}_t^{\m n}\big)^\prime\m\big\|_2^2\m.
\end{align*}
We estimate the term involving $\m\widetilde{x}^{\m n}$ using the integral form of the CDE solution as
\begin{align*}
\big\|\m\widetilde{x}_t^{\m n} - \widetilde{x}_{t_k}^{\m n}\big\|_2 & \leq \int_{t_k}^t \hspace{-1mm}\big\| f(\widetilde{x}_s^{\m n})\big\|_2\m ds + \int_{t_k}^t \hspace{-1mm} \big\|g(\widetilde{x}_s^{\m n})\big\|_2 \m \big\|\big(\widetilde{W}_s^{\m n}\big)^\prime\m\big\|_2\m ds\\
& \leq \Big(\|f\|_\infty  + \|g\|_\infty \sup_{t\in[t_k\m, t_{k+1}]}\big\|\big(\widetilde{W}_t^{\m n}\big)^\prime\m\big\|_2\Big)h_k\m.
\end{align*}
As shown in the proof of Theorem \ref{thm:rough_path_convergence}, the piecewise polynomials $\{\widetilde{W}^{\m n}\}$ are uniformly $\alpha$-H\"{o}lder continuous. That is, there exists an almost surely finite random variable $c > 0$ (independent of $n$) such that $\|\widetilde{W}_t^{\m n} - \widetilde{W}_s^{\m n}\|_2 \leq c (t-s)^\alpha$ for all $s,t\in[0,T]$.

Hence, for $m\geq 1$, we can simply bound the following iterated integrals of $\widetilde{W}^n$ as
\begin{align}\label{eq:interated_w_integral}
&\bigg\|\frac{1}{h_k^m}\int_{t_k}^{t_{k+1}} \int_{t_k}^{r_1}  \cdots \int_{t_k}^{r_{m-1}}\big(\m\widetilde{W}_{r_m}^{\m n} - \widetilde{W}_{t_k}^{\m n}\m\big)\, dr_m\cdots dr_1\bigg\|_2\\
&\mm\leq \frac{1}{h_k^m}\int_{t_k}^{t_{k+1}} \int_{t_k}^{r_1}  \cdots \int_{t_k}^{r_{m-1}}  \big\|\widetilde{W}_{r_m}^{\m n} - \widetilde{W}_{t_k}^{\m n}\big\|_2\, dr_m\cdots dr_1\nonumber\\
&\mm \leq  \frac{1}{h_k^m}\int_{t_k}^{t_{k+1}} \int_{t_k}^{r_1}  \cdots \int_{t_k}^{r_{m-1}}  c (r_m-t_k)^\alpha\, dr_m\cdots dr_1\nonumber\\
&\mm = \frac{\Gamma(\alpha+m+1)}{\Gamma(\alpha+1)}c h_k^{\alpha}.\nonumber
\end{align}
In \cite{foster2020poly}, it was shown that the coefficients in the polynomial approximation $\widetilde{W}^{\m n}$ can be expressed as linear combinations of rescaled iterated integrals with the form (\ref{eq:interated_w_integral}). In particular, this means that they can be uniformly bounded as $O(h_k^\alpha)$ almost surely. For example, in the simplest cases where the polynomial's degree is either $1$ or $2$, we have $\widetilde{W}_{t_k\m, t}^{\m n} = \frac{t-t_k}{h_k}\m W_k$ or $\widetilde{W}_{t_k\m, t}^{\m n} = \frac{t-t_k}{h_k}\m W_k + \frac{6(t_{k+1}-t)(t-t_k)}{h_k^2}\m H_k$ with $W_k\m, H_k\sim O(h_k^\alpha)$.\smallbreak

Therefore, the derivative of the polynomial satisfies $\big(\widetilde{W}^{\m n}\big)^\prime \sim O(h_k^{\alpha - 1})$ and thus
\begin{align*}
&\bigg\|\int_{t_k}^{t_{k+1}}\hspace*{-1mm}\int_{t_k}^t \big(g^{\m\prime}(\widetilde{x}_s^{\m n})g(\widetilde{x}_s^{\m n}) - g^{\m\prime}(\widetilde{x}_{t_k})g(\widetilde{x}_{t_k}^{\m n})\big)\m d\widetilde{W}_s^{\m n}\m d\widetilde{W}_t^{\m n}\bigg\|_2\\
&\leq \frac{1}{2}h_k^3  \big\|\m g^{\m \prime}g\m\big\|_{\Lip(1)} \Big(\|f\|_\infty  + \|g\|_\infty \sup_{t\in[t_k\m, t_{k+1}]}\big\|\big(\widetilde{W}_t^{\m n}\big)^\prime\m\big\|_2\Big) \sup_{t\in[t_k\m, t_{k+1}]}\big\|\big(\widetilde{W}_t^{\m n}\big)^\prime\m\big\|_2^2\sim O(h^{3\alpha}).
\end{align*}
As $\big(\widetilde{W}^{\m n}\big)^\prime\hspace{-1mm}\sim O(h_k^{\alpha - 1})$, it is straightforward to show the middle terms in $R$ are $O(h_k^{\alpha + 1})$. Finally, the fourth term in $R$ is clearly $O(h_k^2)$ due to the boundedness of $f$ and $f^\prime$.
\end{proof}\smallbreak
\begin{remark}
If we set $\alpha > \frac{1}{3}\m$, then $R$ is $o(h_k)$ as required in the condition (\ref{eq:intro_ode_condition}).
\end{remark}\smallbreak
\begin{remark}
By equation (\ref{eq:levy_area_as_martingale}), which was shown in the proof of Theorem \ref{thm:basic_area_approximation}, we see that the polynomial iterated integrals are optimal unbiased approximations as
\begin{align}\label{eq:poly_integrals_as_exps}
\int_{t_k}^{t_{k+1}}\hspace*{-1mm}\int_{t_k}^t d\widetilde{W}_s^{\m n}\otimes d\widetilde{W}_t^{\m n} = \E\bigg[\int_{t_k}^{t_{k+1}} W_{t_k\m, t} \otimes \circ\, dW_t \,\Big|\,\W_k\bigg].
\end{align}
Therefore, it is clear that the convergence condition (\ref{eq:intro_ode_condition}) originally presented in the introduction simply follows from Theorems \ref{thm:pathwise_conv_thm} and \ref{thm:cde_expansion} along with equation (\ref{eq:poly_integrals_as_exps}).
\end{remark}
\begin{remark}
Whilst a general formula for polynomial approximations of second iterated integrals (or equivalent L\'{e}vy areas) is given in \cite[Theorem 5.4]{foster2023levyarea}, we only use
\begin{align}
\E\bigg[\int_{t_k}^{t_{k+1}} W_{t_k\m, t} \otimes \circ\, dW_t \,\Big|\,W_k\bigg]& = \frac{1}{2}W_k^{\otimes 2},\label{eq:expected_integral_3}\\
\E\bigg[\int_{t_k}^{t_{k+1}} W_{t_k\m, t} \otimes \circ\, dW_t \,\Big|\,W_k\m, H_k\bigg] & = \frac{1}{2}W_k^{\otimes 2} + H_k\otimes W_k - W_k\otimes H_k\m.\label{eq:expected_integral_4}
\end{align}
\end{remark}

With the Taylor expansion given by Theorem \ref{thm:cde_expansion}, it is now straightforward to see why our analysis is applicable to the numerical methods discussed in Section \ref{sect:examples_of_methods}.
For each of these numerical methods, we shall compute their Taylor expansions and show that the resulting remainder term is sufficiently small (i.e.~$o(h_k)$ almost surely).\medbreak

\begin{itemize}[leftmargin=1.5em]
\item \underline{The ``no$\m$-area'' Milstein method (\ref{eq:milstein})}\smallbreak

As discussed in Remark \ref{rmk:milstein}, by using the It\^{o}$\m$-Stratonovich correction, the no$\m$-area Milstein method can be rewritten as the following method for Stratonovich SDEs:
\begin{align*}
Y_{k+1} := Y_k + \widetilde{f}\big(Y_k\big) h_k + g\big(Y_k\big) W_k + \frac{1}{2}g^{\m\prime}\big(Y_k\big)g\big(Y_k\big)W_k^{\otimes 2},
\end{align*}
where $\widetilde{f}(\cdot) = f(\cdot) - \frac{1}{2}\sum_{i=1}^d g_i^\prime(\cdot)g_i(\cdot)$ is the drift for the SDE in Stratonovich form.
By (\ref{eq:expected_integral_3}), it is clear that the no$\m$-area Milstein method falls under our framework.\medbreak

\item \underline{Heun's method (\ref{eq:heun})}\smallbreak

Our strategy is simply to show that Heun's method is close to no$\m$-area Milstein. 
Using the Taylor expansion $\m F(b) = F(a) + F^{\m\prime}(a)c + \int_0^1 (1-r) F^{\m\prime\prime}(a + rc)(c,c)\m dr\m$ for $a,b,c\in \R^e$ where $c = b-a$, and the notation $F^{\m\prime\prime}(\cdot)(u\otimes v) \equiv F^{\m\prime\prime}(\cdot)(u,v)$, we have
\begin{align*}
Y_{k+1} = Y_k & + \frac{1}{2}\big(F\big(Y_k\big) + F\big(\widetilde{Y}_{k+1}\big)\big)\overline{W}_{\n k}\\
= Y_k & + \frac{1}{2}F\big(Y_k\big)\overline{W}_{\n k} + \frac{1}{2}\big(F\big(Y_k\big) + F^{\m\prime}\big(Y_k\big)\big(\widetilde{Y}_{k+1} - Y_k\big) + R\big)\m\overline{W}_{\n k}\\
= Y_k & + F\big(Y_k\big)\overline{W}_{\n k} + \frac{1}{2}\big(F^{\m\prime}\big(Y_k\big)F\big(Y_k\big)\overline{W}_{\n k} + R\big)\m\overline{W}_{\n k}\\
= Y_k & + F\big(Y_k\big)\overline{W}_{\n k} +  F^{\m\prime}\big(Y_k\big)F\big(Y_k\big)\bigg(\hspace*{-17mm}\underbrace{\frac{1}{2}\overline{W}_{\n k}^{\m\otimes 2}}_{=\,\E\big[\int_{t_k}^{t_{k+1}}(\m\overline{W}_{\n t} - \overline{W}_{\n t_k})\,\otimes \,\circ\, d\overline{W}_{\n t}\,|\, \overline{W}_{\n k}\big]}\hspace*{-17mm}\bigg) + \frac{1}{2}\m R\m\overline{W}_{\n k}\m,
\end{align*}
where the remainder is $R = \int_0^1 (1-r)F^{\m\prime\prime}(Y_k + (1-r)F(Y_k)\overline{W}_{\n k})\m dr\m(F(Y_k)\overline{W}_{\n k})^{\otimes 2}$.\medbreak

Since $W$ is $\alpha$-H\"{o}lder continuous for $\alpha\in(0,\frac{1}{2})$, we can employ a similar argument as the proof of Theorem \ref{thm:cde_expansion} to show that $R\sim O(h_k^{2\alpha})$, and thus $R\m\overline{W}_{\n k}\sim O(h_k^{3\alpha})$.
Therefore, as the vector fields and their derivatives are bounded, by taking $\alpha > \frac{1}{3}\m$, we see that Heun's method has the desired expansion (up to a $o(h_k)$ remainder).\medbreak

\item \underline{SPaRK method (\ref{eq:srk})}\footnote{\textbf{S}plitting \textbf{Pa}th \textbf{R}unge$\m$-\textbf{K}utta method}\smallbreak
Taylor expanding this numerical method is slightly more involved, but results in
\begin{align*}
Y_{k+1} = Y_k & + F\big(Y_k\big)\big(a\m\overline{W}_{\n k} + \overline{H}_k\big) + b\m F\big(Y_{k+\frac{1}{2}}\big)\overline{W}_{\n k} + F\big(Z_{k+1}\big)\big(a\m\overline{W}_{\n k} - \overline{H}_k\big)\\[2pt]
= Y_k & + F\big(Y_k\big)\big(a\m\overline{W}_{\n k} + \overline{H}_k\big) + b\big(F\big(Y_k\big) + F^{\m\prime}\big(Y_k\big)\big(Y_{k+\frac{1}{2}} - Y_k\big) + R_1\big)\big)\overline{W}_{\n k}\\
& + \big(F\big(Y_k\big) + F^{\m\prime}\big(Y_k\big)\big(Z_{k+1} - Y_k\big) + R_2\big)\big(a\m\overline{W}_{\n k} - \overline{H}_k\big)\\
= Y_k & + F\big(Y_k\big)\overline{W}_{\n k} + b\m F^{\m\prime}\big(Y_k\big)F\big(Y_k\big)\bigg(\frac{1}{2}\overline{W}_{\n k} + \sqrt{3}\,\overline{H}_k\bigg)\otimes \overline{W}_{\n k} + b\m R_1 \overline{W}_{\n k}\\
& + F^{\m\prime}\big(Y_k\big)F\big(Y_k\big)\big(\overline{W}_{\n k}\otimes\big(a\m\overline{W}_{\n k} - \overline{H}_k\big)\big)  + (R_2 + R_3)\big(a\m\overline{W}_{\n k} - \overline{H}_k\big)\\
= Y_k & + F\big(Y_k\big)\overline{W}_{\n k} + F^{\m\prime}\big(Y_k\big)F\big(Y_k\big)\bigg(\hspace*{-1mm}\underbrace{\frac{1}{2}\overline{W}_{\n k}^{\m\otimes 2} + \overline{H}_{\n k}\otimes \overline{W}_{\n k} - \overline{W}_{\n k} \otimes \overline{H}_{\n k}}_{=\,\E\big[\int_{t_k}^{t_{k+1}}(\m\overline{W}_{\n t} - \overline{W}_{\n t_k})\,\otimes \,\circ\, d\overline{W}_{\n t}\,|\, \overline{W}_{\n k}\m,\m \overline{H}_{\n k}\big]}\hspace{-1mm}\bigg) + R,
\end{align*}
where $R$ is given by $R := b\m R_1 \overline{W}_{\n k} + (R_2 + R_3)\big(a\m\overline{W}_{\n k} - \overline{H}_k\big)$ with $a := \frac{3-\sqrt{3}}{6}$, $b := \frac{\sqrt{3}}{3}$, 
\begin{align*}
R_1 & = \int_0^1 (1-r)F^{\m\prime\prime}\big(Y_k + (1-r)(Y_{k+\frac{1}{2}} - Y_k)\big) dr\m\big(Y_{k+\frac{1}{2}} - Y_k\big)^{\otimes 2},\\
R_2 & = \int_0^1 (1-r)F^{\m\prime\prime}\big(Y_k + (1-r)(Z_{k+1} - Y_k)\big) dr\m\big(Z_{k+1} - Y_k\big)^{\otimes 2},\\[3pt]
R_3 & = \big(F\big(Y_{k+\frac{1}{2}}\big) - F\big(Y_k\big)\big)\overline{W}_{\n k}\m.
\end{align*}
Recall that for $\alpha\in(0,\frac{1}{2})$, Brownian motion is $\alpha$-H\"{o}lder continuous almost surely. That is, there exists an almost surely finite random variable $c > 0$ such that $\|W_{s,t}\|_2 \leq c|t-s|^{\alpha}$. Hence, space-time L\'{e}vy area is also $\alpha$-H\"{o}lder continuous as
\begin{align*}
\|H_{s,t}\|_2 = \bigg\|\m\frac{1}{t-s} \int_s^t W_{s,u}\m du - \frac{1}{2}W_{s,t}\bigg\|_2 \leq c\bigg(\frac{1}{1+\alpha} + \frac{1}{2}\bigg) |t-s|^{\alpha}.
\end{align*}
Using the $\alpha$-H\"{o}lder estimates for $W_k$ and $H_k$ along with the boundedness of $F, F^{\m\prime}$ and $F^{\m\prime\prime}$, it is straightforward to estimate each remainder term as $\|R_i\|_2 \leq c_i h_k^{2\alpha}$.
We can produce estimates for $R_1\m, R_2\sim O(h_k^{2\alpha})$ using similar arguments as before.
The third term $R_3$ can simply be estimated using the Lipschitz continuity of $F$ as
\begin{align*}
\|R_3\|_2 & \leq \big\|F\big(Y_{k+\frac{1}{2}}\big) - F\big(Y_k\big)\big\|_2\big\|\m\overline{W}_{\n k}\big\|_2\\ 
& \leq \|F\|_{\Lip(1)}\|Y_{k+\frac{1}{2}} - Y_{k}\|_2\big\|\m\overline{W}_{\n k}\big\|_2\\
& \leq \|F\|_{\Lip(1)}\|F\|_\infty \Big(\frac{1}{2}\big\|\m\overline{W}_{\n k}\big\|_2 + \sqrt{3}\m\big\|\m\overline{H}_{\n k}\big\|_2\Big)\big\|\m\overline{W}_{\n k}\big\|_2\m.
\end{align*}
Thus, it follows that the remainder $R$ is $O(h_k^{3\alpha}$), which is $o(h_k)$ if we take $\alpha > \frac{1}{3}$.
\end{itemize}

\section{Counterexample involving non-nested step sizes}\label{sect:counterexample} In this brief section, we will present a simple counterexample showing that adaptive step sizes without the ``no skip'' property can prevent the numerical approximation from converging to the desired SDE solution.
The idea is simply that by selectively skipping over values of the Brownian path, we can induce a local $O(h)$ bias with a positive expectation. These biases can then propagate linearly to produce a global $O(1)$ bias, also with a positive expectation.
Our counterexample to illustrate this is given by Theorem \ref{thm:counterexample}.
\begin{theorem}\label{thm:counterexample}
For $N\geq 1$, we will define an approximation $\{(X_k\m, Y_k)\}_{0\leq k\leq N}$ of
\begin{align}\label{eq:counterexample_sde}
dx_t & = dW_t^1\m,\nonumber\\
dy_t & = x_t \circ dW_t^2\m,\\
(x_0\m, y_0) & = 0,\nonumber
\end{align}
over $[0,T]$ by $(X_0\m, Y_0) := 0$ and for each $k\geq 0$, the numerical solution is propagated using either one step or two steps of Heun's method (depending on which is largest),
\begin{align*}
X_{k+1} & := X_k + W_{t_{k+1}}^1 - W_{t_k}^1\m,\\
Y_{k+1} & := \max\bigg(Y_k +  \frac{1}{2}\big(W_{t_k}^1 + W_{t_{k+1}}^1) \big(W_{t_{k+1}}^2 - W_{t_k}^2\big),\\[-2pt]
&\hspace{15.5mm}Y_k + \frac{1}{2}\big(W_{t_k}^1 + W_{t_{k+\frac{1}{2}}}^1\big)W_{t_k\m,\m t_{k+\frac{1}{2}}}^2 + \frac{1}{2}\big(W_{t_{k+\frac{1}{2}}}^1 + W_{t_{k+1}}^1\big)W_{t_{k+\frac{1}{2}}\m,\m t_{k+1}}^2\bigg),
\end{align*}
where $t_k := kh$ with $h = \frac{T}{N}$ denoting the coarsest step size. Then, at time $T$, we have
\begin{align*}
\E\big[y_T\big] = 0,\hspace{5mm}\E\big[Y_N\big] = \frac{1}{8}T.
\end{align*}
\end{theorem}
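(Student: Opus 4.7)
The first claim $\E[y_T]=0$ is immediate: since $W^1$ and $W^2$ are independent, the cross-variation $\langle W^1,W^2\rangle$ vanishes, so the Stratonovich integral coincides with its It\^{o} counterpart. Then $y_T = \int_0^T W_t^1\,dW_t^2$ is an It\^{o} integral of a locally bounded adapted process against a Brownian motion independent of it, hence a (square-integrable) martingale with $\E[y_T]=0$.

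For the second claim, the plan is to decompose each Heun step into a mean-zero ``symmetric'' part plus a biased part. Write $\Delta^{(1)}_k$ for the single-step Heun increment and $\Delta^{(2)}_k$ for the two-step increment, and set $D_k := \Delta^{(2)}_k - \Delta^{(1)}_k$. Using the identity $\max(a,b) = \tfrac{1}{2}(a+b) + \tfrac{1}{2}|b-a|$, we have
\begin{align*}
Y_{k+1} - Y_k = \tfrac{1}{2}\big(\Delta^{(1)}_k + \Delta^{(2)}_k\big) + \tfrac{1}{2}|D_k|.
\end{align*}
Both $\Delta^{(1)}_k$ and $\Delta^{(2)}_k$ have zero mean: expanding each one gives a term of the form $W_{t_k}^1 \cdot (\text{increment of }W^2)$ plus a bilinear term in independent increments of $W^1$ and $W^2$, and by independence of the two Brownian coordinates both parts have mean zero. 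So it remains to compute $\E[|D_k|]$.

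The key algebraic step is to write $D_k$ in terms of the four half-step increments. Setting $\alpha := W^1_{t_{k+1/2}}-W^1_{t_k}$, $\beta := W^1_{t_{k+1}}-W^1_{t_{k+1/2}}$, $\gamma := W^2_{t_{k+1/2}}-W^2_{t_k}$, $\delta := W^2_{t_{k+1}}-W^2_{t_{k+1/2}}$, a short expansion of both $\Delta^{(1)}_k$ and $\Delta^{(2)}_k$ shows the terms involving $W^1_{t_k}$ as well as the ``diagonal'' bilinear terms $\alpha\gamma$ and $\beta\delta$ cancel in the difference, yielding
\begin{align*}
D_k = \tfrac{1}{2}\bigl(\alpha\delta - \beta\gamma\bigr),
\end{align*}
with $\alpha,\beta,\gamma,\delta$ i.i.d.\ $\mathcal{N}(0,h/2)$. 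I would then compute $\E[|\alpha\delta-\beta\gamma|]$ by conditioning on $(\gamma,\delta)$: conditionally, $\alpha\delta-\beta\gamma\sim\mathcal{N}\bigl(0,\tfrac{h}{2}(\gamma^2+\delta^2)\bigr)$, so
\begin{align*}
\E\bigl[|\alpha\delta-\beta\gamma|\bigr] = \sqrt{\tfrac{2}{\pi}}\,\sqrt{\tfrac{h}{2}}\,\E\bigl[\sqrt{\gamma^2+\delta^2}\bigr] = \sqrt{\tfrac{h}{\pi}}\cdot\sqrt{\tfrac{h}{2}}\cdot\sqrt{\tfrac{\pi}{2}} = \tfrac{h}{2},
\end{align*}
using that $\sqrt{\gamma^2+\delta^2}/\sqrt{h/2}$ is Rayleigh with mean $\sqrt{\pi/2}$. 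Hence $\E[|D_k|]=h/4$ and $\E[Y_{k+1}-Y_k]=h/8$. Summing over $k=0,\ldots,N-1$ gives $\E[Y_N] = N\cdot h/8 = T/8$.

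The only part requiring care is the algebraic simplification showing $D_k = \tfrac{1}{2}(\alpha\delta-\beta\gamma)$; once that is in hand, the moment computation is routine. Conceptually, $D_k$ is exactly the difference between the piecewise-linear approximations (with one subinterval versus two) of the second iterated integral $\int_{t_k}^{t_{k+1}} W^1_{t_k,u}\,dW^2_u$, so the bias comes from selectively choosing the larger estimator of this Brownian area term, which has a strictly positive conditional expected absolute value.
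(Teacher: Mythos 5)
Your proof is correct, and it follows the same structural skeleton as the paper's: you isolate the difference $D_k$ between the two-step and one-step Heun increments, observe that both increments have zero mean, and reduce the bias to $\tfrac{1}{2}\E[|D_k|]$ per step. Your algebraic simplification $D_k = \tfrac{1}{2}(\alpha\delta - \beta\gamma)$ matches the paper's identity exactly.

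Where you diverge is in the evaluation of $\E[|\alpha\delta - \beta\gamma|]$. The paper applies Brownian scaling to reduce this to $\tfrac{h}{2}\,\E[|Z_1 Z_2 - Z_3 Z_4|]$ for i.i.d.\ standard normals, and then invokes a random-matrix fact (proved in its Appendix C via the Mellin transform of the density of $|\det A_n|$ from the literature) that $\E[|\det A_n|] = \E[|Z|^n]$, specialised to $n=2$ to get $\E[|Z_1 Z_2 - Z_3 Z_4|] = 1$. You instead give a self-contained elementary argument: condition on $(\gamma,\delta)$, note that $\alpha\delta - \beta\gamma$ is then a centred normal with variance $\tfrac{h}{2}(\gamma^2 + \delta^2)$, and use the half-normal mean $\sigma\sqrt{2/\pi}$ together with the Rayleigh mean $\E[\sqrt{G_1^2 + G_2^2}] = \sqrt{\pi/2}$. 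Your computation closes the loop without any external reference, which arguably makes for a cleaner standalone proof of this specific case; the paper's route via $\E[|\det A_n|]$ is heavier machinery but also establishes the $n\times n$ generalisation as a (noted) byproduct. Both arrive at $\E[Y_{k+1} - Y_k] = \tfrac{h}{8}$ and hence $\E[Y_N] = \tfrac{T}{8}$, so the proposal is sound.
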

\begin{proof} Since the $x$-component of the SDE is solved exactly by Heun's method, we have $X_k = W_{t_k}^1$ for all $k\geq 0$. Rearranging the second part of the $y$-component gives
\begin{align*}
& Y_k + \frac{1}{2}\big(W_{t_k}^1 + W_{t_{k+\frac{1}{2}}}^1\big)\big(W_{t_{k+\frac{1}{2}}}^2 - W_{t_k}^2\big) + \frac{1}{2}\big(W_{t_{k+\frac{1}{2}}}^1 + W_{t_{k+1}}^1\big)\big(W_{t_{k+1}}^2 - W_{t_{k+\frac{1}{2}}}^2\big)\\
&\mm = Y_k +  \frac{1}{2}\big(W_{t_k}^1 + W_{t_{k+1}}^1) \big(W_{t_{k+1}}^2 - W_{t_k}^2\big)\\
&\hspace{13.5mm} + \frac{1}{2}\big(W_{t_{k+\frac{1}{2}}}^1 - W_{t_k}^1\big) \big(W_{t_{k+1}}^2 - W_{t_{k+\frac{1}{2}}}^2\big) - \frac{1}{2}\big(W_{t_{k+1}}^1 - W_{t_{k+\frac{1}{2}}}^1\big) \big(W_{t_{k+\frac{1}{2}}}^2 - W_{t_k}^2\big).
\end{align*}
Since the last line gives the difference between one and two steps of Heun's method, it is enough for us to compute its expected value (conditional on being non-negative).
\begin{align*}
\E\big[Y_{k+1}\big] & = \E\bigg[Y_k +  \frac{1}{2}\big(W_{t_k}^1 + W_{t_{k+1}}^1) \big(W_{t_{k+1}}^2 - W_{t_k}^2\big)\\[-2pt]
&\hspace{12.7mm} + \max\Big(0, \frac{1}{2}\Big(W_{t_k\m,\m t_{k+\frac{1}{2}}}^1 W_{t_{k+\frac{1}{2}},\m t_{k+1}}^2 - W_{t_{k+\frac{1}{2}},\m t_{k+1}}^1 W_{t_k\m,\m t_{k+\frac{1}{2}}}^2\Big)\Big)\bigg]\\
& = \E\big[Y_k\big] + \frac{1}{4}\m \E\Big[\big|W_{t_k\m,\m t_{k+\frac{1}{2}}}^1 W_{t_{k+\frac{1}{2}},\m t_{k+1}}^2 - W_{t_{k+\frac{1}{2}},\m t_{k+1}}^1 W_{t_k\m,\m t_{k+\frac{1}{2}}}^2\big|\Big]\\[3pt]
& = \E\big[Y_k\big] + \frac{1}{8}h\m \E\Big[\big|Z_1 Z_2 - Z_3 Z_4\big|\Big],
\end{align*}
where $Z_1\m,Z_2\m, Z_3\m, Z_4\sim N(0,1)$ are independent standard normal random variables.
The result now follows as $\m\E[|Z_1 Z_2 - Z_3 Z_4|] = 1$, which is shown in Theorem \ref{thm:random_matrix_fact}..
\end{proof}

We refer to \cite[Section 4.1]{gaines1997variable} for a counterexample with no-skip dyadic step sizes and the Euler-Maruyama method -- which does not satisfy condition (\ref{eq:intro_ode_condition}) in general.

\section{Numerical example}\label{sect:numerical_example} 

In this section, we will consider the well-known SABR model used in mathematical finance \cite{SABR2017Cai, SABR2018Cui, SABR2002Hagan, SABR2017Leitao}.
The model describes the evolution of an interest (or exchange) rate $S$ whose volatility $\sigma$ is also stochastic. It is given by
\begin{align}\label{eq:SABR}
\begin{split}
dS_t & = \sqrt{1-\rho^2}\m \sigma_t (S_t)^\beta  dW_t^1 + \rho\m \sigma_t (S_t)^\beta  dW_t^2,\\
d\sigma_t & = \alpha\m\sigma_t\m dW_t^2,
\end{split}
\end{align}
where $W$ is a standard two-dimensional Brownian motion and $\alpha,\beta,\rho$ are parameters.
For simplicity, we set $\alpha = 1$ and $\beta = \rho = 0$. In Stratonovich form, (\ref{eq:SABR}) then becomes
\begin{align}\label{eq:SABR_ver2}
\begin{split}
dS_t & = \exp(\nu_t)\circ dW_t^1,\\
d\nu_t & = -\frac{1}{2}\m dt + dW_t^2,
\end{split}
\end{align}
with $\nu_t := \log (\sigma_t)$. We will also use $S_0 = \nu_0 = 0$ and a fixed time horizon of $T=8$.
As discussed in Section \ref{sect:organisation}, it may be difficult to accurately simulate the SDE (\ref{eq:SABR_ver2}) since the noise vector fields are non-commutative. That is, second iterated integrals of Brownian motion (or L\'{e}vy areas) will appear within stochastic Taylor expansions. Moreover, the first noise term in (\ref{eq:SABR_ver2}) is neither bounded or has bounded derivatives.
Whilst this means that the SABR model does not satisfy the regularity assumptions used by our main result, we expect it will make the SDE more challenging to simulate with constant step sizes (and thus hopefully better handled using adaptive step sizes).
\medbreak
By It\^{o}'s isometry, we can show that the local error for Heun's method is given by\vspace*{-1.5mm}
\begin{align*}
\E\bigg[\bigg(S_{t+h} - \hspace{-0.25mm}\bigg(S_t+\frac{1}{2}\sigma_t\big(1 + e^{-\frac{1}{2}h + (W_{t+h}^2-W_t^2)}\big)\big(W_{t+h}^1 - W_t^1\big)\bigg)\hspace{-0.25mm}\bigg)^{\hspace{-0.125mm}2}\,\bigg|\,\sigma_t\bigg] \hspace{-0.05mm} = \hspace{-0.05mm}\frac{1}{4}\sigma_t^2 h (e^h - 1).
\end{align*}
Thus, by setting this proportional to $h$, we arrive at the following previsible step size,\vspace*{-0.5mm}
\begin{align}\label{eq:previsible_step}
h(\nu_t) := \log(1 + C\exp(-2\nu_t)),\\[-18pt] \nonumber
\end{align}
where $C > 0$ is a user-specified constant. In our experiments, we found that (\ref{eq:previsible_step}) gave the best choice of previsible step size -- even for the Euler-Maruyama method. A simple lower bound for its average step size $\E[h(\nu_t)]$ is given in \ref{append:previsible}.\smallbreak

For the non-previsible step size, we have modified the Proportional-Integral (PI) controller for commutative SDEs detailed in \cite{burrage2004variable, ilie2015adaptive} and implemented it in the popular ODE/SDE simulation package ``Diffrax'' \cite{kidger2022nde}. This proposes a new step size $h_{k+1}$ as\vspace*{-1mm}
\begin{align*}
h_{k+1} & := \min\big(\{\widetilde{h}_{k+1}\}\hspace{-0.125mm}\cup\hspace{-0.125mm}\{h > 0 : t_{k+1} + h\text{ corresponds to a previously rejected time}\}\big),\\[2pt]
\widetilde{h}_{k+1} & := h_k\times\\[-2pt]
&\mm \,\,\bigg(\mathrm{Fac}_{\max}\hspace{-0.5mm}\wedge\hspace{-0.5mm}\bigg(\mathrm{Fac}_{\min}\hspace{-0.5mm}\vee\hspace{-0.5mm}\mathrm{Fac}\m\bigg(\frac{C}{e(Y_k, h_k, \mathcal{W}_k)}\bigg)^{K_I}\hspace{-0.5mm}\bigg(\frac{e(Y_{k-1}, h_{k-1}, \mathcal{W}_{k-1})}{e(Y_k, h_k, \mathcal{W}_k)}\bigg)^{K_P}\,\bigg)\bigg),
\end{align*}
where $\{\m\mathrm{Fac}_{\max}\m, \mathrm{Fac}_{\min}\}$ are the maximum and minimum factors $h_k$ can change by, $\mathrm{Fac}\in(0,1)$ is a safety factor, $\{K_I, K_P\}$ are the \textit{integral} and \textit{proportional} coefficients, $C > 0$ is the absolute tolerance and $e(Y_k, h_k, \mathcal{W}_k)$ denotes the rescaled local error estimator $\frac{1}{C\sqrt{d}}\|Y_{k+1} - \widetilde{Y}_{k+1}\|_2$ obtained by computing $(Y_{k+1}\m, \widetilde{Y}_{k+1})$ from $(Y_k, h_k, \mathcal{W}_k)$\footnote{In our case, the pair $(Y_{k+1}\m, \widetilde{Y}_{k+1})$ will be obtained from an embedded Runge-Kutta method. Whilst it performed worse in our experiment, one can also compare one step and two half-steps of the same method. Of course, the numerical solution should then be propagated using the two half-steps.}.
If $e(Y_{k+1}\m, h_{k+1}\m, \mathcal{W}_{k+1}) > 1$, then the step $h_{k+1}$ is rejected and $\m\mathrm{Fac}_{\max}$ is temporarily set equal to $\mathrm{Fac} < 1$ so that the next candidate step size is strictly less than $h_k$. Otherwise, if the estimator satisfies $e(Y_{k+1}\m, h_{k+1}\m, \mathcal{W}_{k+1}) \leq 1$, then $h_{k+1}$ is accepted. 
For PI adaptive SDE solvers, Diffrax always uses an initial candidate step size of $0.01$. We also set the factor parameters to their default values in Diffrax, $\mathrm{Fac}_{\max} = 10$, $\mathrm{Fac}_{\min} = 0.2, \mathrm{Fac} = 0.9$ and, as recommended by \cite{ilie2015adaptive}, we choose $K_I = 0.3, K_P = 0.1$. \medbreak

This approach differs from standard PI controllers as it revisits the time points where Brownian information was generated (instead of ``skipping over'' such times). Otherwise, the ``no-skip'' condition needed for our main result (Theorem \ref{thm:pathwise_conv_thm}) would not be satisfied and convergence issues similar to our counterexample (\ref{eq:counterexample_sde}) may arise. However, due to its implementation within Diffrax, our modified PI controller is not restricted to dyadic times, so does not satisfy the dyadic assumption of Theorem \ref{thm:pathwise_conv_thm}. Just as for the step size control detailed in Definition \ref{def:intro_adaptive_control}, we were unable to prove that $\mesh(\D_n)\rightarrow 0$ for the PI adaptive step sizes. However, similar to the discussion in Remark \ref{rmk:intro_adaptive_control}, we expect that it would be straightforward to show $\mesh(\D_n)\rightarrow 0$ if a uniform lower bound could be established for the local error estimator $\|Y_{k+1} - \widetilde{Y}_{k+1}\|_2\m$.\medbreak
\noindent
In the experiment, we compare the following numerical methods and step size controls:\smallbreak
\begin{itemize}[leftmargin=1.75em]
\item The Euler-Maruyama method, with constant and previsible step sizes.\smallbreak
\item Heun's method (Definition \ref{def:heun_euler}), with constant, previsible and PI step sizes.\smallbreak
\item The SPaRK method (Definition \ref{def:srk_heun}) with constant, previsible and PI step sizes.
\end{itemize}\smallbreak

For each of these methods, we will compute the following strong error estimator:

\begin{definition}[Monte Carlo estimator for strong / root mean squared errors] Let $\{Y_{t_k}^C\}_{0\m\leq\m t_k\m\leq\m T}$ be a numerical solution of SDE (\ref{eq:SABR}), computed over $[0,T]$ with $Y_0^C := y_0\m$, where $C$ denotes the user-specified parameter used to determine step sizes. For $N\geq 1$, we consider $N$ independent samples of $Y^C\hspace{-0.25mm}$, denoted by $\{\{Y_{t_k, i}^C\}_{k\m\geq\m 0}\}_{1\m\leq\m i\m\leq\m N}$.\smallbreak
Then, for a given $C$ and $N$, we will define the strong error estimator $S_{N,C}$ as\vspace{-0.5mm}
\begin{align}\label{eq:strong_error}
S_{N,C} := \max_{0\m\leq\m k\m\leq\m 32}\sqrt{\frac{1}{N}\sum_{i\m =\m 1}^N \Big\|Y_{k\Delta T,\m i}^C - Y_{k\Delta T,\m i}^{\text{fine}}\Big\|_2^2}\,,\\[-20pt]\nonumber
\end{align}
where $\Delta T := \frac{1}{32}T$ and $\{Y_{t_k,\m i}^{\text{fine}}\}_{0\m\leq\m t_k\m\leq\m T}$ denotes a ``fine'' numerical approximation that is computed using the same SDE solver -- but which uses either a fine step size of $h^{\text{fine}}:=2^{-14}T\m$ (if $Y^C$ is computed using constant step sizes) or $C_{\text{fine}} := \frac{1}{8}C_{\min}$ (where $C_{\min}$ is the smallest $C$ used to compute $Y^C$ in the adaptive step size experiments).
\end{definition}
\begin{remark}
A crucial aspect of the error estimator (\ref{eq:strong_error}) is that $Y^C$ and $Y^{\text{fine}}$ are computed with respect to the same Brownian motion. Fortunately, this can be achieved in a convenient manner by Diffrax's (single-seed) Virtual Brownian Tree \cite{jelincic2024VBT}. However, we do note that (\ref{eq:strong_error}) is an estimator for mean squared error -- which is very different from the pathwise convergence established by our main result, Theorem \ref{thm:pathwise_conv_thm}.
\end{remark}

The estimated convergence rates for the different methods and step size controls are given in Figure \ref{fig:sabr}
and Python code for reproducing these results can be found at \href{https://github.com/andyElking/Adaptive_SABR}{github.com/andyElking/Adaptive\_SABR}.
\begin{figure}[!h]
    \centering
    \includegraphics[width=0.75\textwidth]{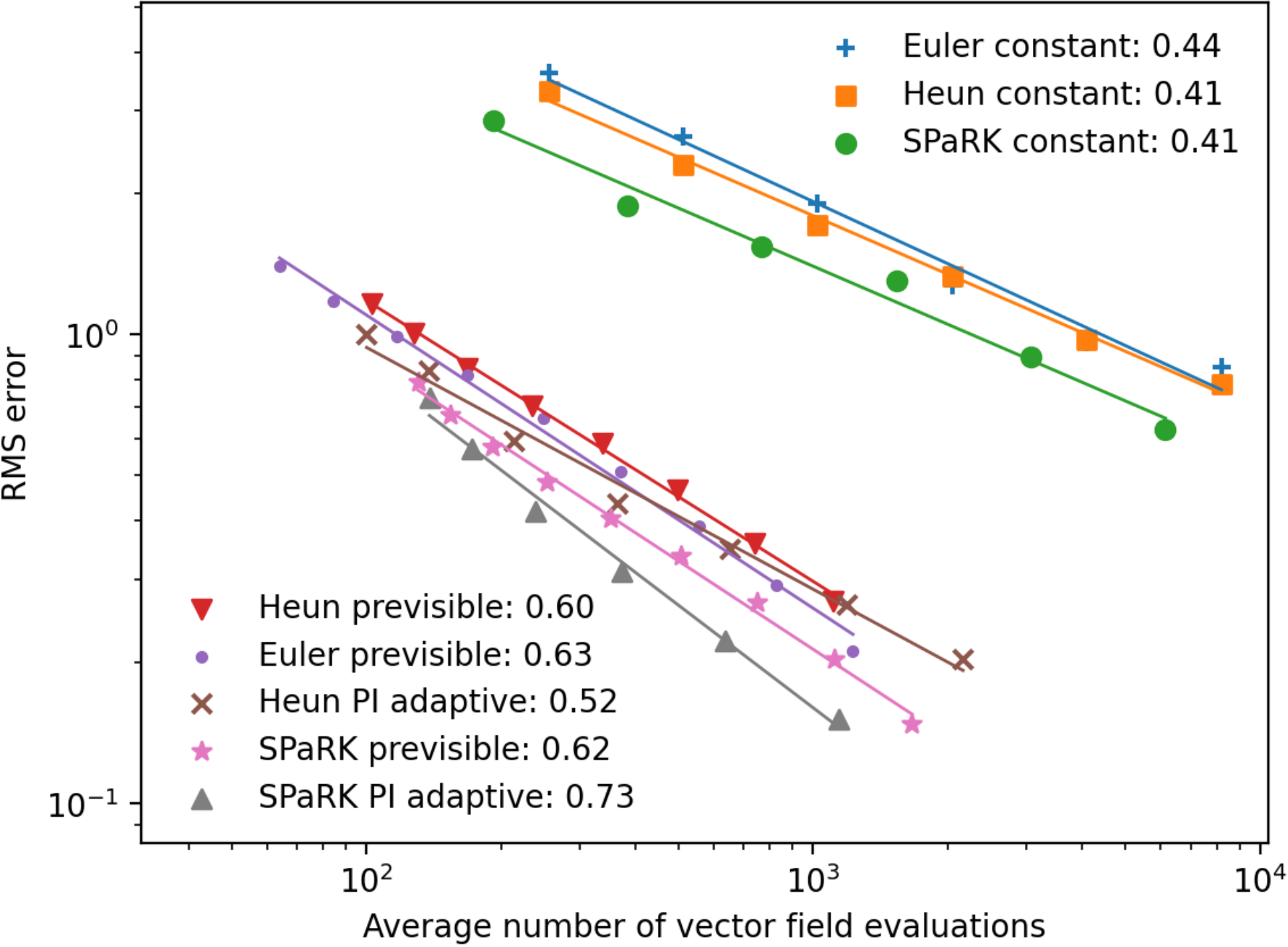}
    \caption{Estimated convergence rates for the different numerical methods and step size controls. Errors for the constant and variable step size SDE solvers were estimated with $N = 100,000$ samples. As the Euler-Maruyama, Heun and SPaRK methods use $1,2$ and $3$ vector field evaluations per step, the x-axis is obtained as the average number of steps multiplied by these numbers.}
    \label{fig:sabr}
\end{figure}\vspace{-1mm}

As expected, all of the SDE solvers exhibit a convergence rate of roughly $O(\sqrt{h})$, with SPaRK being the most accurate method. In particular, with variable step sizes, we see that each numerical method becomes an order of magnitude more accurate.
Perhaps this is not surprising for the previsible step sizes, which were model specific and derived to achieve the desired mean squared error condition. However, for the PI adaptive SDE solvers, we have simply used the recommended parameter settings.
Therefore, based on this numerical experiment, we believe that our modified PI step size controller is a promising technique for simulating general multidimensional SDEs.

\section{Conclusion}\label{sect:conclusion} The primary contribution of this paper shows that standard numerical schemes for SDEs, such as the Heun and no$\m$-area Milstein methods, converge pathwise when used with non-previsible adaptive step sizes (under mild conditions).
This enables us to use embedded methods, which are popular within ODE numerics, to estimate local errors for an adaptive step size controller in the general SDE setting. However, we leave further investigations of such adaptive controllers as a future topic.\medbreak

We also introduce a numerical method called ``SPaRK'' which uses space-time L\'{e}vy area to achieve smaller local errors than standard ``increment only'' SDE solvers.
This stochastic Runge$\m$-Kutta method is compatible with the proposed adaptive step size theory and we empirically demonstrated its efficacy on a two-dimensional SDE.\medbreak

\noindent
The results of this paper also lead to further questions regarding adaptive SDE solvers:\medbreak

\begin{itemize}[leftmargin=1.5em]

\item Do numerical methods that ``weakly'' approximate L\'{e}vy area terms, or otherwise introduce local unbiasaed $O(h)$ perturbations, converge with adaptive step sizes?\medbreak

Examples of such schemes include high order weak approximations \cite{jelincic2025levygan, ninomiya2009ninomiya, ninomiya2008victoir, talay1990HighOrder} and the below extension of Heun's method for general It\^{o} SDEs introduced in \cite{roberts2012euler}:

\begin{definition}[Heun's method with embedded Euler-Maruyama for It\^{o} SDEs]
We define a numerical solution $Y = \{Y_k\}$ for the It\^{o} SDE (\ref{eq:ito_SDE}) by $Y_0 := y_0$ and, for $k\geq 0$, we define $\big(Y_{k+1}, \widetilde{Y}_{k+1}\big)$ similar to Heun's method (see Definition \ref{def:heun_euler}) as
\begin{align}
Z_{k+1} & = Y_k + F\big(Y_k\big)\big(\m\overline{W}_{\n k} + \overline{S}_k\big),\nonumber\\[2pt]
Y_{k+1} & = Y_k + \frac{1}{2}\m F\big(Y_k\big)\big(\m\overline{W}_{\n k} + \overline{S}_k\big) + \frac{1}{2}\m F(Z_{k+1})\big(\m\overline{W}_{\n k} - \overline{S}_k\big),\label{eq:ito_heun}\\[2pt]
\widetilde{Y}_{k+1} & := Y_k + F\big(Y_k\big)\overline{W}_{\n k}\m,\label{eq:ito_heun_embed}
\end{align}
where $\overline{S}_k := \begin{pmatrix} 0 & \sqrt{h_k}\m S_k\end{pmatrix}^\top$ with the random vector $S_k\sim\text{Uniform}\big(\big\{-1,1\big\}^d\m\big)$ assumed to be independent of the Brownian motion $W$. Here, $F$ is given by (\ref{eq:new_notation1}).
\end{definition}\medbreak

\item Can we extend our main result to apply to no-skip partitions that are not dyadic?\medbreak

\item Is it possible to obtain convergence rates for non-previsible adaptive SDE solvers?\medbreak

\item What further improvements can be made to the proposed modified PI controller?
(for example, finding a better approach for choosing the initial candidate step size)\medbreak

\item Does the L\'{e}vy construction detailed in Appendix \ref{sect:levy_contruct} extend to an explicit algorithm for generating $N$ terms in the polynomial expansion of Brownian motion? \cite{foster2020poly, habermann2021poly}\medbreak

This would allow adaptive solvers to use accurate L\'{e}vy area approximations \cite{foster2023levyarea}.\medbreak

\item Do solvers that use non-previsible adaptive step sizes converge for stiff SDEs? \cite{rackauckas2020stiff}

\end{itemize}

\section*{Acknowledgements}

We appreciate the support from the University of Bath, the DataSig group under EPSRC grant EP/S026347/1 and the Alan Turing Institute.
We would also like to thank Terry Lyons and Harald Oberhauser for supervising the first author as a graduate student and having countless interesting discussions about stochastic numerics, rough paths and (non-previsible) adaptive step sizes for SDEs.\vspace{-2mm}

\newpage

\bibliographystyle{siamplain}
\bibliography{references}

\newpage

\appendix

\section{Improved local accuracy of the SPaRK method (\ref{eq:srk})}\label{append:taylor_expansions}In this section, we will detail certain properties of the newly introduced stochastic Runge$\m$-Kutta method (\ref{eq:srk}) that make it an appealing alternative to Heun's method.\medbreak

We first recall the definition of this method (using the notation from Section \ref{sect:examples_of_methods}),
\begin{align}
Y_{k+\frac{1}{2}} := Y_k & + F\big(Y_k\big)\bigg(\frac{1}{2}\overline{W}_{\n k} + \sqrt{3}\,\overline{H}_k\bigg),\hspace{7.5mm}
Z_{k+1} := Y_k +  F\big(Y_{k+\frac{1}{2}}\big)\overline{W}_{\n k}\m,\nonumber\\[3pt]
Y_{k+1} := Y_k & + F\big(Y_k\big)\big(a\m\overline{W}_{\n k} + \overline{H}_k\big) + b\m F\big(Y_{k+\frac{1}{2}}\big)\overline{W}_{\n k} + F\big(Z_{k+1}\big)\big(a\m\overline{W}_{\n k} - \overline{H}_k\big),\label{eq:append_srk}
\end{align}
with $a = \frac{3-\sqrt{3}}{6}$ and $b = \frac{\sqrt{3}}{3}$. In Section \ref{sect:methods_as_cdes}, the scheme (\ref{eq:append_srk}) was Taylor expanded as
\begin{align}\label{eq:srk_expansion}
Y_{k+1} = Y_k & + f(Y_k)h_k + g(Y_k)W_k\\
& + g^{\m\prime}(Y_k)\m g(Y_k)\bigg(\frac{1}{2}W_k^{\otimes 2} + H_k\otimes W_k - W_k\otimes H_k\bigg) + o(h_k).\nonumber
\end{align}
By comparing (\ref{eq:srk_expansion}) with the Taylor expansion of the SDE solution started from $Y_k\m$, we see that the leading term in the expansion of the one-step mean squared error is 
\begin{align*}
&\E\Bigg[\bigg\|g^{\m\prime}(Y_k)\m g(Y_k)\bigg(\int_{t_k}^{t_{k+1}}W_{t_k\m, t}\otimes \circ\, dW_t - \Big(\frac{1}{2}W_k^{\otimes 2} + H_k\otimes W_k - W_k\otimes H_k\Big)\bigg)\bigg\|_2^2\Bigg]\\
& = \sum_{i\neq j}\E\Big[\big\|g_j^{\m\prime}(Y_k)\m g_i(Y_k)\big\|_2^2\Big]\underbrace{\E\Bigg[\bigg(\int_{t_k}^{t_{k+1}}\hspace*{-1mm}W_{t_k\m, t}^i \circ dW_t^j - \Big(\frac{1}{2}W_k^i W_k^j + H_k^i W_k^j - W_k^i H_k^j\Big)\bigg)^2\Bigg]}_{=\frac{1}{12} h_k^2\, \text{ by \cite[Theorem 5.4]{foster2023levyarea} and Brownian scaling}}\\
& = \frac{1}{12} h_k^2\sum_{i\neq j}\E\Big[\big\|g_j^{\m\prime}(Y_k)\m g_i(Y_k)\big\|_2^2\Big].
\end{align*}
On the other hand, two half steps of Heun's method on $[t_k\m, t_{k+1}]$ can be expanded as
\begin{align*}
Y_{k+1} & = Y_{k+\frac{1}{2}} + \frac{1}{2}f(Y_{k+\frac{1}{2}})h_k + g(Y_{k+\frac{1}{2}})W_{k+\frac{1}{2}} + g^{\m\prime}(Y_{k+\frac{1}{2}})\m g(Y_{k+\frac{1}{2}})\bigg(\frac{1}{2}W_{k+\frac{1}{2}}^{\otimes 2}\bigg) + o(h_k)\\[3pt]
& = Y_k + f(Y_k)h_k + g(Y_k)\big(W_k + W_{k+\frac{1}{2}}\big)\\
&\hspace{8.5mm} + g^{\m\prime}(Y_k)\m g(Y_k)\bigg(\frac{1}{2}W_k^{\otimes 2} + W_k\otimes W_{k+\frac{1}{2}} + \frac{1}{2}W_{k+\frac{1}{2}}^{\otimes 2}\bigg) + o(h_k),
\end{align*}
where we now use the notation $W_k := W_{t_{k+\frac{1}{2}}} - W_{t_k}$ and $W_{k+\frac{1}{2}} := W_{t_{k+1}} - W_{t_{k+\frac{1}{2}}}$.
For $i\neq j$, we have $\int_s^t W_{s,u}^i \circ dW_u^j = \int_s^t W_{s,u}^i dW_u^j\m$, and so applying It\^{o}'s isometry gives
\begin{align*}
&\E\Bigg[\bigg(\int_{t_k}^{t_{k+1}}\hspace*{-1mm}W_{t_k\m, t}^i \circ dW_t^j - \Big(\frac{1}{2}W_k^i W_k^j + W_k^i W_{k+\frac{1}{2}}^j + \frac{1}{2}W_{k+\frac{1}{2}}^i W_{k+\frac{1}{2}}^j\Big)\bigg)^{\vspace{-1mm}2}\Bigg]\\
&= \E\Bigg[\bigg(\int_{t_k}^{t_{k+\frac{1}{2}}}\hspace*{-1mm}W_{t_k, t}^i \circ dW_t^j - \frac{1}{2}W_k^i\m W_k^j\bigg)^2\hspace{-0.75mm} + \bigg(\int_{t_{k+\frac{1}{2}}}^{t_{k+1}}\hspace*{-1mm}W_{t_{k+\frac{1}{2}}, t}^i \circ dW_t^j -\frac{1}{2}W_{k+\frac{1}{2}}^i W_{k+\frac{1}{2}}^j\bigg)^{\vspace{-1mm}2}\m \Bigg]\\
& = \frac{1}{4}\bigg(\frac{1}{2}h_k\bigg)^2 + \frac{1}{4}\bigg(\frac{1}{2}h_k\bigg)^2 = \frac{1}{8}\m h_k^2\m.
\end{align*}
Hence, the leading term in the mean squared error of the two-step Heun's method is
\begin{align*}
\frac{1}{8} h_k^2\sum_{i\neq j}\E\Big[\big\|g_j^{\m\prime}(Y_k)\m g_i(Y_k)\big\|_2^2\Big].
\end{align*}
Thus, provided the step size is sufficiently small, we see that one step of the stochastic Runge$\m$-Kutta method (\ref{eq:srk}) is more accurate than two steps of Heun's method (\ref{eq:heun}).
In addition, two steps of Heun's method uses an extra evaluation of the vector fields.\medbreak

It is also known that Heun's method achieves second order weak convergence for SDEs with additive noise \cite{leimkuhler2014OLD} (that is, when $g(y) = \sigma$ for a constant matrix $\sigma\in\R^{e\times d}$),
\begin{align*}
    dy_t = f(y_t)\m dt + \sigma\m dW_t\m,
\end{align*}
where, for the purposes of this section, we will assume that $f$ is sufficiently smooth.
For additive noise SDEs, the Splitting Path Runge-Kutta (SPaRK) method becomes
\begin{align*}
Y_{k+\frac{1}{2}} := Y_k & + \frac{1}{2}\m f(Y_k)\m h_k + \sigma\bigg(\frac{1}{2}W_k + \sqrt{3}\m H_k\bigg), \mm
Z_{k+1} := Y_k  + f(Y_{k+\frac{1}{2}})h_k + \sigma\m W_k\m,\\[3pt]
Y_{k+1} := Y_k & + \bigg(\frac{3-\sqrt{3}}{6}\m f(Y_k) + \frac{\sqrt{3}}{3}\m f(Y_{k+\frac{1}{2}}) + \frac{3-\sqrt{3}}{6}\m f(Z_{k+1})\bigg)h_k + \sigma W_k\m.
\end{align*}
Using the classical Taylor theorem $f(b) = f(a) + f^\prime(a)(b-a) + \frac{1}{2}f^{\prime\prime}(a)(b-a)^{\otimes 2} + R$ with $R := \frac{1}{2}\int_0^1 (1-t)^2 f^{\prime\prime\prime}(a + t(b-a))(b-a)^{\otimes 3}dt$, we can Taylor expand $Y_{k+1}$ to give 
\begin{align*}
Y_{k+1} = Y_k & + f(Y_k)\m h_k + \sigma W_k + \frac{\sqrt{3}}{3}f^\prime (Y_k)\bigg(\frac{1}{2}\m f(Y_k)\m h_k + \sigma\bigg(\frac{1}{2}W_k + \sqrt{3}\m H_k\bigg)\bigg)h_k\\
& + \frac{3-\sqrt{3}}{6}\m f^\prime(Y_k)\Big(f(Y_{k+\frac{1}{2}})h_k +\sigma W_k\Big)h_k\\
& + \frac{\sqrt{3}}{3}\bigg(\frac{1}{2}f^{\prime\prime} (Y_k)\bigg(\frac{1}{2}\m f(Y_k)\m h_k + \sigma\bigg(\frac{1}{2}W_k + \sqrt{3}\m H_k\bigg)\bigg)^{\otimes 2}\,\bigg)h_k\\
& + \frac{3-\sqrt{3}}{6}\bigg(\frac{1}{2}f^{\prime\prime} (Y_k)\Big( f(Y_{k+\frac{1}{2}})\m h_k + \sigma\m W_k\Big)^{\otimes 2}\,\bigg)h_k + O\big(h_k^\frac{5}{2}\big)\\[3pt]
= Y_k & + f(Y_k)\m h_k + \sigma W_k + f^\prime(Y_k)\m\sigma\bigg(\frac{1}{2} W_k + H_k\bigg)h_k + \frac{1}{2}\m f^\prime(Y_k)f(Y_k)h_k^2 + O\big(h_k^\frac{5}{2}\big)\\
&  + \frac{1}{2}\m f^{\prime\prime}(Y_k)\m \sigma^{\otimes 2}\bigg(\frac{6-\sqrt{3}}{12}\m W_k^{\otimes 2} + \frac{1}{2}\m W_k\otimes H_k + \frac{1}{2}\m H_k\otimes W_k + \sqrt{3}\m H_k^{\otimes 2}\bigg)h_k\m.
\end{align*}
We note that in the above Taylor expansion, we have $\big(\frac{1}{2} W_k + H_k\big)h_k = \int_{t_k}^{t_{k+1}} W_{t_k, t}\m dt$ and $\m\E\Big[\Big(\frac{6-\sqrt{3}}{12}\m W_k^{\otimes 2} + \frac{1}{2}\m W_k\otimes H_k + \frac{1}{2}\m H_k\otimes W_k + \sqrt{3}\m H_k^{\otimes 2}\Big)h_k\Big] = \frac{1}{2}h_k^2 = \E\Big[\int_{t_k}^{t_{k+1}} W_{t_k, t}^{\otimes 2}\m dt\Big]$.\medbreak

Thus, by the standard mean squared error analysis of Milstein and Tretyakov \cite{milstein2004physics},
we see that stochastic Runge$\m$-Kutta method will converge strongly with rate $O(h^\frac{3}{2})$.
Similarly, it is also clear that the method will achieve second order weak convergence.
We refer the reader to \cite{foster2024splitting} for more details on the analysis of commutative noise SDEs. For non-additive noise SDEs, it is straightforward to see that the proposed stochastic Runge$\m$-Kutta method exhibits the same strong convergence rates as Heun's method.\medbreak

To conclude, we summarise the properties of these numerical methods in Table \ref{table:heun_vs_srk}.\vspace{-0.5mm}
\begin{table}[H]
  \begin{center}
  \begin{tabular}{ccccc}
    \toprule
     &
      \multicolumn{2}{c}{Heun's method}  &  
      \multicolumn{2}{c}{Splitting Path}\\
      & One step & Two steps &  \multicolumn{2}{c}{Runge$\m$-Kutta}\\
 & & & & \\[-12pt]
   \midrule \\[-10pt]
    Vector field evaluations & 2 & 4 & \multicolumn{2}{c}{3} \\[4pt]
    Gaussian random vectors & 1 & 2  & \multicolumn{2}{c}{2}  \\[4pt]
 Coefficient in front of leading term &  & \\
 $h_k^2\sum_{i\neq j}\E\Big[\big\|g_j^{\m\prime}(Y_k)\m g_i(Y_k)\big\|_2^2\Big]$ & $\displaystyle\frac{1}{4}$ & $\displaystyle\frac{1}{8}$ &  \multicolumn{2}{c}{$\displaystyle\frac{1}{12}$}\\[6pt]
 in expansion of mean squared error &  & \\
 & & & & \\[-9pt]
   \midrule \\[-9pt]
   Convergence rates for SDEs with... & Strong & Weak  & Strong\hspace{3mm} & Weak \\[4pt]
   general noise &  $O(h^\frac{1}{2})$ & $O(h)$  & $O(h^\frac{1}{2})$\hspace{3mm} & $O(h)$\\[4pt]
   commutative noise & \multirow{2}{*}{$O(h)$} & \multirow{2}{*}{$O(h)$} &  \multirow{2}{*}{$O(h)$}\hspace{3mm} & \multirow{2}{*}{$O(h)$}\\
   $\big(g_i^\prime(y)g_j(y) = g_j^\prime(y)g_i(y)\big)$ & & & & \\[4pt]
   additive noise &  \multirow{2}{*}{$O(h)$} & \multirow{2}{*}{$O(h^2)$} &  \multirow{2}{*}{$O(h^\frac{3}{2})$}\hspace{3mm} & \multirow{2}{*}{$O(h^2)$}\\
    $\big(g(y) \equiv\sigma\in\R^{e\times d}\big)$  & & & & \\[2pt]
    \bottomrule
  \end{tabular}\medbreak
  \caption{Properties of Heun's method (\ref{eq:heun}) and the proposed SPaRK method (\ref{eq:srk}).}\label{table:heun_vs_srk}
  \end{center}
\end{table}\vspace{-5mm}

\section{Generating increments and integrals of Brownian motion}\label{sect:levy_contruct}
In this section, we will briefly outline how increments $W_{s,t}$ and space-time L\'{e}vy areas $H_{s,t} := \frac{1}{t-s}\int_s^t W_{s,u}\m du\m$ of Brownian motion can be generated for adaptive SDE solvers.\medbreak

We emphasise that the results detailed in this section are already well-established in the SDE numerics literature \cite{foster2020thesis, jelincic2024VBT} and are only included here for completeness.\medbreak

To begin, we note that for any $u\in[s,t]$, a path increment $W_{s,u} := W_u - W_s$ can be generated conditional on $W_{s,t}$ using standard properties of the Brownian bridge. When $u = \frac{1}{2}(s+t)$, this leads to well-known \textit{L\'{e}vy construction of Brownian motion}.

\begin{theorem}[Conditional distribution of Brownian increments]
For $s\leq u\leq t$,
\begin{align*}
W_{s,u} & = \frac{u-s}{t-s}\m W_{s,t} + B_{s,u}\m,\mm
W_{u,t} = \frac{t-u}{t-s}\m W_{s,t} - B_{s,u}\m,
\end{align*}
where the Brownian bridge increment $B_{s,u} \sim\mathcal{N}\big(0, \frac{(u-s)(t-u)}{t-s} I_d\big)$ is independent of $W_{s,t} \sim\mathcal{N}\big(0, (t-s) I_d\big)$. When $u = \frac{1}{2}(s+t)$ is the midpoint of $[s,t]$, this simplifies to
\begin{align*}
W_{s,u} & = \frac{1}{2}\m W_{s,t} + B_{s,u}\m,\mm
W_{u,t} = \frac{1}{2}\m W_{s,t} - B_{s,u}\m,
\end{align*}
where $B_{s,u} \sim\mathcal{N}\big(0,  \frac{1}{4}\m (t-s) I_d\big)$.
\end{theorem}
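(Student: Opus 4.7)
The plan is to prove the statement directly by defining the bridge increment via the first displayed equation and then verifying its distributional properties using only joint Gaussianity together with the independent-increments property of Brownian motion. Explicitly, I would set
\begin{align*}
B_{s,u} := W_{s,u} - \frac{u-s}{t-s}\m W_{s,t}\m,
\end{align*}
so that the first identity $W_{s,u} = \frac{u-s}{t-s}\m W_{s,t} + B_{s,u}$ holds by construction, and the second identity then follows algebraically from $W_{u,t} = W_{s,t} - W_{s,u}$.

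The bulk of the argument is computing the covariance structure. Since $(W_{s,u}, W_{s,t})$ is a linear transformation of $(W_{s,u}, W_{u,t})$, which is jointly Gaussian with independent coordinates, the pair $(B_{s,u}, W_{s,t})$ is also jointly Gaussian. First I would compute
\begin{align*}
\cov(B_{s,u}\m, W_{s,t}) = \cov(W_{s,u}\m, W_{s,t}) - \frac{u-s}{t-s}\m\var(W_{s,t})\m,
\end{align*}
and use the identity $\cov(W_{s,u}\m, W_{s,t}) = \var(W_{s,u}) + \cov(W_{s,u}\m, W_{u,t}) = (u-s)I_d$ from the independent-increments property, to conclude the covariance is zero. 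Joint Gaussianity then upgrades this to independence of $B_{s,u}$ and $W_{s,t}$. Next I would compute
\begin{align*}
\var(B_{s,u}) = (u-s)I_d - 2\frac{u-s}{t-s}(u-s)I_d + \Big(\frac{u-s}{t-s}\Big)^{\!2}(t-s)I_d = \frac{(u-s)(t-u)}{t-s}\m I_d\m,
\end{align*}
giving the stated law of $B_{s,u}$. The midpoint case is just the specialisation $u = \frac{1}{2}(s+t)$, for which $\frac{u-s}{t-s} = \frac{t-u}{t-s} = \frac{1}{2}$ and $\frac{(u-s)(t-u)}{t-s} = \frac{1}{4}(t-s)$.

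There is no real obstacle here; this is essentially a one-line computation once we accept the characterisation of jointly Gaussian vectors by their mean and covariance. The only subtlety worth flagging is that independence does not follow from zero covariance in general, but it does in the Gaussian setting, which applies because $B_{s,u}$ is a linear combination of independent Gaussians that also form a linear generating set for $W_{s,t}$. For the $d$-dimensional case, one simply notes that distinct coordinates of Brownian motion are independent, so the covariance computation reduces to the scalar case componentwise, producing the diagonal covariance $\frac{(u-s)(t-u)}{t-s}\m I_d$ as claimed.
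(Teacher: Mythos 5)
Your proof is correct and follows essentially the same route as the paper: define $B_{s,u}$ by subtracting off the linear prediction, verify zero covariance with $W_{s,t}$, invoke joint Gaussianity to upgrade to independence, and compute the variance. The only cosmetic differences are that you spell out the intermediate step $\cov(W_{s,u},W_{s,t}) = \var(W_{s,u}) + \cov(W_{s,u},W_{u,t})$ and write out the variance computation explicitly, both of which the paper leaves implicit.
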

\begin{proof}
Let $B_{s,u} := W_{s,u} - \frac{u-s}{t-s}\m W_{s,t}$. The covariance between $B_{s,u}$ and $W_{s,t}$ is
\begin{align*}
\cov(B_{s,u}\m, W_{s,t}) & = \cov(W_{s,u}\m, W_{s,t}) - \frac{u-s}{t-s}\m \cov(W_{s,t}\m, W_{s,t})\\[-1pt]
&  = (u-s) - \frac{u-s}{t-s}\m(t-s) = 0.
\end{align*}
Since $B_{s,u}$ and $W_{s,t}$ depend on the same Brownian motion, they are jointly normal,
and therefore independent. The variance of $B_{s,u}$ is straightforward to compute.
\end{proof}

Extending the theorem given above to include both increments and space-time L\'{e}vy areas of the Brownian motion is more involved, but based on a similar argument (which also uses that uncorrelated jointly normal random variables are independent).

\begin{theorem}[Conditional distribution of increments and space-time L\'{e}vy areas of Brownian motion within an interval, {\cite[Theorems 6.1.4 and 6.1.6]{foster2020thesis}}]\label{thm:levy_with_areas}For $s\leq u\leq t$,
\begin{align*}
W_{s,u} & = \frac{u-s}{h}\m W_{s,t} + \frac{6(u-s)(t-u)}{h^2}\m H_{s,t} + \frac{2(a+b)}{h}\m X_1\m,\\[3pt]
W_{u,t} & = \frac{t-u}{h}\m W_{s,t} - \frac{6(u-s)(t-u)}{h^2}\m H_{s,t} - \frac{2(a+b)}{h}\m X_1\m,\\[3pt]
H_{s,u} & = \frac{(u-s)^2}{h^2}\m H_{s,t} - \frac{a}{u-s}\m X_1 + \frac{c}{u-s}\m X_2\m,\\[3pt]
H_{u,t} & = \frac{(u-s)^2}{h^2}\m H_{s,t}  - \frac{b}{t-u}\m X_1 - \frac{c}{t-u}\m X_2\m,
\end{align*}
where $W_{s,t}\sim\mathcal{N}(0,h I_d)\m, H_{s,t}\sim\mathcal{N}\big(0, \frac{1}{12}\m h I_d\big)$ and $X_1\m, X_2\sim\mathcal{N}(0, I_d)$ are independent, $h = t-s$ denotes the length of the interval and the coefficients $a,b,c\in\R$ are given by
\begin{align*}
a & := \frac{(u-s)^\frac{7}{2}(t-u)^\frac{1}{2}}{2h\sqrt{(u-s)^3 + (t-u)^3}}\m,\mm
b := \frac{(u-s)^\frac{1}{2}(t-u)^\frac{7}{2}}{2h\sqrt{(u-s)^3 + (t-u)^3}}\m,\\[3pt]
 &\hspace{22.5mm} c := \frac{\sqrt{3}(u-s)^\frac{3}{2}(t-u)^\frac{3}{2}}{6\sqrt{(u-s)^3 + (t-u)^3}}\m.
\end{align*}
When $u = \frac{1}{2}(s+t)$ is the midpoint of the interval $[s,t]$, these formulae simplify to
\begin{align*}
W_{s,u} & = \frac{1}{2}\m W_{s,t} + \frac{3}{2}\m H_{s,t} + Z_{s,u}\m,\mm
W_{u,t} = \frac{1}{2}\m W_{s,t} - \frac{3}{2}\m H_{s,t} - Z_{s,u}\m,\\[3pt]
H_{s,u} & = \frac{1}{4}\m H_{s,t} - \frac{1}{2}\m Z_{s,u} + \frac{1}{2}\m N_{s,t}\m,\mm
H_{u,t} = \frac{1}{4}\m H_{s,t} - \frac{1}{2}\m Z_{s,u} - \frac{1}{2}\m N_{s,t}\m,
\end{align*}
where $W_{s,t}\sim\mathcal{N}(0,h I_d), Z_{s,u}\sim\mathcal{N}\big(0, \frac{1}{16}\m h I_d\big)$ and $H_{s,t}\m, N_{s,t}\sim\mathcal{N}\big(0, \frac{1}{12}\m h I_d\big)$ are all independent Gaussian random vectors.
\end{theorem}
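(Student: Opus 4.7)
My plan is to exploit the fact that $W_{s,u}, W_{u,t}, H_{s,u}, H_{u,t}, W_{s,t}, H_{s,t}$ are all Wiener integrals of deterministic functions against the Brownian motion $W$ on $[s,t]$, and hence form a jointly centred Gaussian family; in this setting, distributional identities reduce to matching covariances, and uncorrelatedness is equivalent to independence. Working coordinate-wise (the $d$ components of $W$ are independent one-dimensional Brownian motions), it suffices to prove the one-dimensional statement and tensor with $I_d$. The strategy is to first compute the $L^2(\P)$-projections of $W_{s,u}$ and $H_{s,u}$ onto $\operatorname{span}(W_{s,t}, H_{s,t})$ to pin down the deterministic coefficients of $W_{s,t}$ and $H_{s,t}$, and then verify that the $2\times 2$ covariance matrix of the residuals admits a Cholesky factorization corresponding exactly to the $X_1, X_2$ combinations appearing in the statement.

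Writing $h_1 := u-s$, $h_2 := t-u$, $h := h_1 + h_2$, and using the Wiener-integral representation $H_{a,b} = \int_a^b \big(\tfrac{1}{2} - \tfrac{r-a}{b-a}\big) dW_r$, direct integration yields $\E[W_{s,t}^2] = h$, $\E[H_{s,t}^2] = h/12$, $\E[W_{s,t}H_{s,t}] = 0$, $\E[W_{s,u}W_{s,t}] = h_1$, $\E[W_{s,u}H_{s,t}] = h_1 h_2/(2h)$, $\E[H_{s,u}W_{s,t}] = 0$, and $\E[H_{s,u}H_{s,t}] = h_1^2/(12h)$. Since $W_{s,t} \perp H_{s,t}$, orthogonal projection immediately gives the deterministic coefficients
\begin{align*}
W_{s,u} & = \tfrac{h_1}{h}\, W_{s,t} + \tfrac{6 h_1 h_2}{h^2}\, H_{s,t} + R_W,\\
H_{s,u} & = \tfrac{h_1^2}{h^2}\, H_{s,t} + R_H,
\end{align*}
where the residuals $(R_W, R_H)$ are jointly Gaussian and independent of $(W_{s,t}, H_{s,t})$.

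Subtracting the projected variances from the totals and using $h_1^3 + h_2^3 = h(h_1^2 - h_1 h_2 + h_2^2)$ gives $\var(R_W) = h_1 h_2(h_1^3 + h_2^3)/h^4$, $\var(R_H) = h_1(h^3 - h_1^3)/(12 h^3)$, and $\cov(R_W, R_H) = -h_1^3 h_2/(2 h^3)$. A routine algebraic verification (expanding powers of $h = h_1 + h_2$) then confirms that the lower-triangular matrix $L$ with rows $(2(a+b)/h,\,0)$ and $(-a/h_1,\, c/h_1)$ satisfies $LL^\T = \cov\big((R_W, R_H)\big)$, for the specific $a, b, c$ given in the statement. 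Since $L$ is invertible (as $a, b, c > 0$ when $h_1, h_2 > 0$), defining $(X_1, X_2)^\T := L^{-1}(R_W, R_H)^\T$ produces independent $\mathcal{N}(0, I_d)$ vectors independent of $(W_{s,t}, H_{s,t})$ with $R_W = \tfrac{2(a+b)}{h} X_1$ and $R_H = -\tfrac{a}{h_1} X_1 + \tfrac{c}{h_1} X_2$. The formulas for $W_{u,t}$ and $H_{u,t}$ then follow from the linear identities $W_{u,t} = W_{s,t} - W_{s,u}$ and, after splitting $\int_s^t W_{s,r}\, dr$ at $u$ and rearranging, $h H_{s,t} = h_1 H_{s,u} + h_2 H_{u,t} + \tfrac{h_2}{2} W_{s,u} - \tfrac{h_1}{2} W_{u,t}$; the $h_1 \leftrightarrow h_2$ symmetry of the definitions of $a, b$ accounts for the swapped signs in the $H_{u,t}$ expression.

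The midpoint formulas follow by substituting $h_1 = h_2 = h/2$: one checks that $a = b = h^{3/2}/16$ and $c = \sqrt{3}\, h^{3/2}/24$, so setting $Z_{s,u} := \tfrac{2a}{h_1} X_1 = \tfrac{\sqrt{h}}{4} X_1 \sim \mathcal{N}(0, \tfrac{h}{16} I_d)$ and $N_{s,t} := \tfrac{2c}{h_1} X_2 \sim \mathcal{N}(0, \tfrac{h}{12} I_d)$ recovers the stated identities. The main obstacle is the algebraic verification that the prescribed $a, b, c$ give precisely the right residual covariance; this is a bookkeeping computation involving polynomials in $h_1, h_2$ (the identity $(3 h_1^4 + h_2^2 h^2)h = (3 h_1^2 + 3 h_1 h_2 + h_2^2)(h_1^3 + h_2^3)$ is the most delicate piece), but carries no conceptual difficulty once the Wiener-integral covariances above are in hand.
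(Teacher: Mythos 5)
Your proposal is correct and takes essentially the same approach the paper indicates: the paper does not reproduce a proof (it cites the thesis \cite[Theorems 6.1.4 and 6.1.6]{foster2020thesis}) but explicitly says the result is ``based on a similar argument'' to the preceding Brownian-bridge theorem, namely exploiting joint Gaussianity and the fact that uncorrelated jointly normal variables are independent, which is precisely your projection-onto-$\operatorname{span}(W_{s,t},H_{s,t})$ plus Cholesky-factorization-of-the-residual-covariance strategy. Your covariance computations and the polynomial identity $h(3h_1^4 + h_2^2h^2) = (3h_1^2 + 3h_1h_2 + h_2^2)(h_1^3 + h_2^3)$ all check out, and the linear identity $hH_{s,t} = h_1H_{s,u} + h_2H_{u,t} + \tfrac{h_2}{2}W_{s,u} - \tfrac{h_1}{2}W_{u,t}$ correctly propagates the result to $H_{u,t}$ (and in fact reveals that the $H_{u,t}$ line in the statement should carry the coefficient $\tfrac{(t-u)^2}{h^2}$ rather than $\tfrac{(u-s)^2}{h^2}$, consistent with the $h_1\leftrightarrow h_2$ symmetry and with the midpoint formula $\tfrac{1}{4}H_{s,t}$).
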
\vspace*{-7.5mm}
\begin{figure}[!hbt]
\begin{center}
    \includegraphics[width=0.8\textwidth]{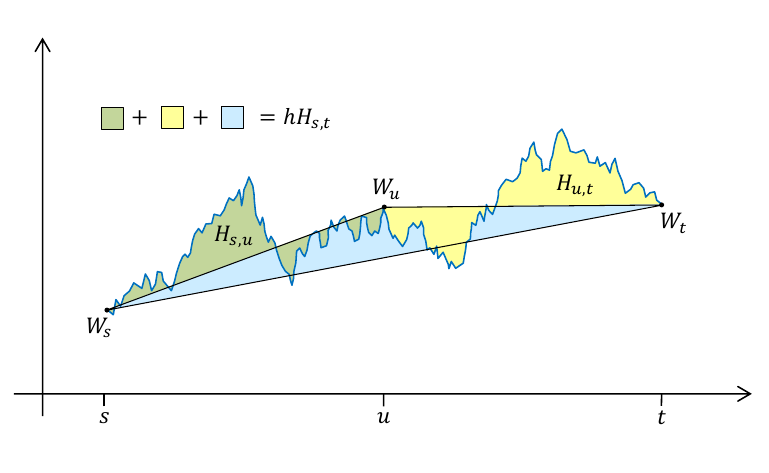}\vspace*{-7.5mm}
    \caption{Illustration of the Brownian increments and areas in Theorem \ref{thm:levy_with_areas} (diagram from \cite{foster2020thesis})}
    \label{fig:levy_construct_with_areas}
\end{center}
\end{figure}

\clearpage
\newpage

\section{Norm of the determinant for a Gaussian random matrix}

In this section, we present the following known result, which was helpful in Section \ref{sect:counterexample}.
\begin{theorem}[$L^1(\mathbb{P})$ norm of the determinant for a Gaussian random matrix]
For $n\geq 1$, let $A_n = \{a_{ij}\}_{1\m\leq\m i,\m j\m\leq\m n}$ be an $n\times n$ matrix with independent entries $a_{ij}\sim\mathcal{N}(0,1)$ and let $Z\sim\mathcal{N}(0,1)$ denote a standard normal random variable. Then,
\begin{align}\label{eq:magic_determinant}
\E\big[\m|\det A_n\m|\m\big] & = \E\big[\m|Z|^n\big].
\end{align}
\end{theorem}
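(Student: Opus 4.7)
The plan is to exploit the rotational invariance of the standard multivariate Gaussian and interpret $|\det A_n|$ geometrically as the volume of the parallelepiped spanned by the rows $R_1,\ldots,R_n$ of $A_n$, which are i.i.d.\ $\mathcal{N}(0,I_n)$ vectors. By Gram--Schmidt orthogonalization, I would write
\begin{align*}
|\det A_n| = \prod_{i=1}^n \big\|R_i^\perp\big\|_2\m,
\end{align*}
where $R_i^\perp$ denotes the projection of $R_i$ onto the orthogonal complement of $\mathrm{span}(R_1,\ldots,R_{i-1})$.

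The key observation is the following conditional distribution computation. Conditional on $R_1,\ldots,R_{i-1}$ (which a.s.\ span an $(i-1)$-dimensional subspace), the orthogonal complement is a deterministic subspace of dimension $n-i+1$. Since $R_i\sim\mathcal{N}(0,I_n)$ is independent of $R_1,\ldots,R_{i-1}$ and the Gaussian law is invariant under orthogonal transformations, the projection of $R_i$ onto this complement is distributed as a standard Gaussian in $\R^{n-i+1}$, independently of $R_1,\ldots,R_{i-1}$. Hence $\|R_i^\perp\|_2$ follows a $\chi_{n-i+1}$ distribution, and the collection $\{\|R_i^\perp\|_2\}_{i=1}^n$ is jointly independent (each factor is independent of the $\sigma$-algebra generated by the preceding rows, hence of the preceding factors).

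Taking expectations and relabelling $k = n-i+1$, I obtain
\begin{align*}
\E\big[\m|\det A_n|\m\big] = \prod_{k=1}^n \E\big[\chi_k\big] = \prod_{k=1}^n \sqrt{2}\,\frac{\Gamma((k+1)/2)}{\Gamma(k/2)}\m,
\end{align*}
using the standard formula for the mean of a chi distribution. The final step is to notice that this product telescopes to $2^{n/2}\,\Gamma((n+1)/2)/\Gamma(1/2)$, which equals the $n$-th absolute moment $\E[|Z|^n] = \frac{2^{n/2}}{\sqrt{\pi}}\Gamma\big(\frac{n+1}{2}\big)$ of a standard normal variable.

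The only slightly delicate point is justifying independence of the factors $\|R_i^\perp\|_2$ and the chi-distribution of each, but both are immediate consequences of rotational invariance applied conditionally; no serious obstacle is anticipated. An alternative I considered was induction on $n$, expanding along the first row and conditioning on the remaining rows to isolate a one-dimensional orthogonal component (giving the $|Z|$ factor), followed by a recursive identification of the residual volume; this essentially amounts to the same argument in disguise.
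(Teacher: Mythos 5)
Your argument is correct, and it is a genuinely different route from the paper's. The paper does not prove the identity from scratch: it quotes the Mellin transform of the density of $|\det A_n|$ from Cicuta--Mehta (evaluating at $s=2$) and then matches the resulting product of Gamma functions against the known formula for the $n$-th absolute moment of a standard normal. Your proof instead derives the result directly: $|\det A_n|$ is written via Gram--Schmidt as the product of the lengths of the successive orthogonal components $\|R_i^\perp\|_2$, rotational invariance gives each factor a $\chi_{n-i+1}$ law conditionally (hence unconditionally) with independence established sequentially, and the product of chi means telescopes to $2^{n/2}\Gamma\big(\tfrac{n+1}{2}\big)/\Gamma\big(\tfrac{1}{2}\big) = \E[|Z|^n]$. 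The details are right: the rows are a.s.\ in general position so the orthogonal complement has the stated dimension, and measurability of $\|R_i^\perp\|_2$ with respect to $\sigma(R_1,\ldots,R_i)$ combined with independence from $\sigma(R_1,\ldots,R_{i-1})$ does give joint independence of the factors. The trade-off is clear: the paper's version is shorter because it black-boxes the key identity behind a citation, while yours is self-contained and essentially re-derives (a special case of) that cited formula via the Bartlett/QR decomposition, which also makes the chi-distributional structure transparent.
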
\medbreak
\begin{proof}
Let $p_n$ denote the probability density function of $|\det A_n\m|$. Then the Mellin transform of $p_n$ is given by \cite[Equation (2.3)]{cicuta2000matrix}, but with $a=\frac{1}{2}$ and $\beta = 1$. That is,
\begin{align*}
\mathcal{M}_n (s) := \int_0^\infty x^{s-1} p_n(x)\m dx = \Big(\frac{1}{2}\Big)^{-\frac{1}{2}n(s-1)}\prod_{j=1}^n\bigg(\frac{\Gamma\big(\frac{1}{2}(s-1) + \frac{1}{2}j\big)}{\Gamma\big(\frac{1}{2}j\big)}\bigg),
\end{align*}
where $\Gamma$ is the standard Gamma function. Since $\E\big[\m|\det A_n\m|\m\big] = \int_0^\infty x\m p_n(x)\m dx$, we have
\begin{align*}
\E\big[\m|\det A_n\m|\m\big] & = \Big(\frac{1}{2}\Big)^{-\frac{1}{2}n}\prod_{j=1}^n\bigg(\frac{\Gamma\big(\frac{1}{2} + \frac{1}{2}j\big)}{\Gamma\big(\frac{1}{2}j\big)}\bigg)\\
& = \Big(\frac{1}{2}\Big)^{-\frac{1}{2}n}\,\frac{\Gamma\big(\frac{1}{2} + \frac{1}{2}n\big)}{\Gamma\big(\frac{1}{2}\big)}\\
& = 2^{\frac{1}{2}n} \,\frac{\Gamma\big(\frac{1}{2}(n+1)\big)}{\sqrt{\pi}},
\end{align*}
which is the $n$-th moment of a half-normal distribution (see \cite[Equation (18)]{winkelbauer2014moments}).
\end{proof}\medbreak

In particular, when $n=2$, we obtain the result used in the proof of Theorem \ref{thm:counterexample}.

\begin{theorem} Let $A,B,C,D\sim\mathcal{N}(0,1)$ be independent random variables.
Then\label{thm:random_matrix_fact}
\begin{align}
\E\big[\m|AD - BC\m|\m\big] = 1.
\end{align}
\end{theorem}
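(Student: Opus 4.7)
The plan is to avoid the Mellin-transform machinery entirely and use a direct conditioning argument, exploiting the fact that $AD - BC$ is conditionally Gaussian given $(C, D)$.

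First I would condition on $(C, D)$. Since $A$ and $B$ are independent of $(C, D)$ and of each other, the linear combination $AD - BC$ is, conditional on $(C, D)$, a centered Gaussian with variance $C^2 + D^2$. That is,
\begin{align*}
AD - BC \,\big|\, (C,D) \,\sim\, \mathcal{N}\big(0,\, C^2 + D^2\big).
\end{align*}

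Next I would use the standard formula $\mathbb{E}[|X|] = \sigma\sqrt{2/\pi}$ for $X \sim \mathcal{N}(0, \sigma^2)$, which gives
\begin{align*}
\mathbb{E}\big[|AD - BC|\,\big|\,C,D\big] = \sqrt{\tfrac{2}{\pi}}\,\sqrt{C^2 + D^2}.
\end{align*}
Taking the total expectation then reduces the problem to computing $\mathbb{E}[\sqrt{C^2+D^2}]$, which is the mean of a Rayleigh distribution with scale parameter $1$ (i.e.\ the Euclidean norm of a standard 2D Gaussian). This mean equals $\sqrt{\pi/2}$, a short calculation done by switching to polar coordinates: $\mathbb{E}[\sqrt{C^2+D^2}] = \frac{1}{2\pi}\int_0^{2\pi}\!\int_0^\infty r \cdot e^{-r^2/2} r\, dr\, d\theta = \int_0^\infty r^2 e^{-r^2/2}\,dr = \sqrt{\pi/2}$.

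Combining these two factors gives $\sqrt{\tfrac{2}{\pi}}\cdot \sqrt{\tfrac{\pi}{2}} = 1$, which is the claim. There is no serious obstacle here — the main thing to notice is that even though $AD - BC$ is a product-type quantity (not obviously Gaussian), conditioning on $(C,D)$ reduces it to a one-dimensional Gaussian whose conditional variance is itself a chi-squared functional of $(C,D)$ with a tractable square-root expectation.
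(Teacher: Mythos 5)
Your proof is correct, and it takes a genuinely different route from the paper. The paper proves the more general identity $\E\big[|\det A_n|\big] = \E\big[|Z|^n\big]$ for an $n\times n$ Gaussian random matrix by invoking a known Mellin-transform formula for the density of $|\det A_n|$ (a product of ratios of Gamma functions), and then specializes to $n=2$. Your argument instead conditions on $(C,D)$, observes that $AD - BC$ is then a one-dimensional centered Gaussian with variance $C^2 + D^2$, applies the half-normal mean formula $\E[|X|] = \sigma\sqrt{2/\pi}$, and finishes by computing the Rayleigh mean $\E\big[\sqrt{C^2+D^2}\,\big] = \sqrt{\pi/2}$, so the two $\sqrt{2/\pi}$ and $\sqrt{\pi/2}$ factors cancel exactly. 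The trade-off is clear: your conditioning argument is elementary, self-contained, and avoids citing random-matrix literature, which is a real simplification given that only the $n=2$ case is needed in the counterexample; the paper's Mellin route, by contrast, yields the full family of moments and the general-$n$ determinant statement in one stroke, which is what the authors chose to record as a (perhaps lesser-known) fact. For the purposes of Theorem \ref{thm:counterexample}, your proof is the more economical one.
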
\medbreak

\begin{remark}
Although (\ref{eq:magic_determinant}) immediately follows from \cite[Equation (2.3)]{cicuta2000matrix} and  \cite[Equation (18)]{winkelbauer2014moments}, it was difficult for the authors to find it stated in the literature.
Hence, (\ref{eq:magic_determinant}) may be a lesser-known (but elegant) fact about the normal distribution.
\end{remark}\medbreak

\section{A lower bound for the SABR-specific previsible step size}
By the construction of the previsible step size (\ref{eq:previsible_step}), applying Heun's method to the SABR model (\ref{eq:SABR_ver2}) with step size $h\equiv h(\nu_t)$ gives a local mean squared error of $\frac{1}{4}Ch$. Since the local errors propagate linearly, the global mean squared error will be $O(C)$.
On the other hand, we can also consider the ``average step size'' taken by the method.\label{append:previsible}
\begin{theorem} Let $\nu_t := -\frac{1}{2}t + W_t$ denote the log-volatility component of (\ref{eq:SABR_ver2}) and let $h(\nu_t) := \log(1+Ce^{-2\nu_t})$ be the previsible step size control with $C>0$. Then
\begin{align}\label{eq:h_lower_bound}
\E\big[h(\nu_t)\big] \geq \log(1+C e^t).
\end{align}
\end{theorem}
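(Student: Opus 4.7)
The plan is to recognise the bound as a Jensen-type inequality after a small rewriting. Substituting $\nu_t = -\tfrac{1}{2}t + W_t$, we have $-2\nu_t = t - 2W_t$, so
\begin{align*}
h(\nu_t) = \log\!\bigl(1 + C e^{\m t - 2W_t}\bigr) = f(W_t),\qquad f(x) := \log\!\bigl(1 + C e^{\m t}\m e^{-2x}\bigr).
\end{align*}
Since $\E[W_t] = 0$, the desired inequality $\E[h(\nu_t)] \geq \log(1 + C e^{\m t})$ is exactly $\E[f(W_t)] \geq f(\E[W_t])$, which by Jensen's inequality will hold as soon as we know that $f$ is convex on $\R$.

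The only real step is therefore to verify convexity of $f$. Writing $u(x) := C e^{\m t}\m e^{-2x} > 0$, so that $u^\prime(x) = -2\m u(x)$, a direct computation gives
\begin{align*}
f^{\m\prime}(x) = \frac{u^\prime(x)}{1 + u(x)} = \frac{-2\m u(x)}{1 + u(x)},\qquad f^{\m\prime\prime}(x) = \frac{4\m u(x)}{(1 + u(x))^2} > 0,
\end{align*}
so $f$ is strictly convex on $\R$ (this is also the standard fact that log-sum-exp of affine functions is convex). Jensen's inequality then gives
\begin{align*}
\E\bigl[h(\nu_t)\bigr] = \E\bigl[f(W_t)\bigr] \geq f\bigl(\E[W_t]\bigr) = f(0) = \log\!\bigl(1 + C e^{\m t}\bigr),
\end{align*}
which is (\ref{eq:h_lower_bound}).

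There is no real obstacle: the entire proof is a one-line rewrite followed by a convexity check and Jensen's inequality. The only thing to keep an eye on is that the quadratic factor $-\tfrac{1}{2}t$ in $\nu_t$ must be folded into the constant $Ce^t$ \emph{before} applying Jensen, which is what makes the $e^{\m t}$ (rather than, say, $e^{3t}$ coming from a naive $\E[e^{-2W_t}]$ computation) appear on the right-hand side of the bound.
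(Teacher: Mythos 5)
Your proof is correct and is essentially the same as the paper's: both establish convexity of the map $x \mapsto \log(1 + C'e^{-2x})$ by computing the second derivative and then apply Jensen's inequality. The only cosmetic difference is that the paper applies Jensen directly to $h(\nu_t)$ with $\E[\nu_t] = -\tfrac{1}{2}t$, whereas you first substitute to write $h(\nu_t) = f(W_t)$ and apply Jensen to $f(W_t)$ with $\E[W_t] = 0$; these are identical computations up to relabeling.
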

\begin{proof}
We first note that the previsible step size control $x\mapsto h(x)$ is convex as
$\frac{d^2}{dx^2} h(x) = \frac{d^2}{dx^2}(\log(1+Ce^{-2x})) = \frac{4C e^{2x}}{(C + e^{2x})^2} > 0$.  Thus, by Jensen's inequality, we have
\begin{align*}
\E\big[h(\nu_t)\big] \geq h\big(\E\big[\nu_t\big]\big) = h\Big(\hspace{-0.25mm}-\frac{1}{2}t\Big) = \log\big(1+C e^t\big).
\end{align*}
\end{proof}

Hence, if $C$ is sufficiently small, the average step size at any $t\in [0,T]$ is $O(C)$, and results in a global $L^2(\mathbb{P})$ error of $O(\sqrt{C}\m)$. 
This mimics the $O(\sqrt{h}\m)$ strong convergence of SDE solvers with a constant step size. However, since (\ref{eq:h_lower_bound}) increases with time, we can see that the variable step size methods tend to take increasingly larger steps.

\end{document}